\documentclass[a4paper,10pt]{amsart}
\usepackage{amssymb,amsmath,amsfonts,amsthm}
\usepackage{graphicx}
\graphicspath{ {./images/} }
 \usepackage{ mathrsfs }
 
\usepackage{psfrag}
\usepackage{mathtools}
\usepackage{color}
\usepackage{enumitem}


\theoremstyle{plain}
\newtheorem{main}{Theorem}

\newtheorem{maincor}[main]{Corollary}
\newtheorem{theorem}{Theorem}[section]
\newtheorem{lemma}[theorem]{Lemma}
\newtheorem{proposition}[theorem]{Proposition}

\theoremstyle{remark}
\newtheorem{remark}[theorem]{Remark}
\newtheorem{definition}{Definition}

\newtheorem{conjecture}[theorem]{Conjecture}

\newcommand{\C}{\operatorname{C}}
\newcommand{\Sing}{\operatorname{Sing}}

\newcommand{\diam}{\operatorname{diam}}

           \def\ea{\end{array}}
          \def\ec{\end{center}}
     \def\ed{\end{description}}
        \def\ee{\end{equation}}
       \def\eea{\end{eqnarray}}
     \def\eeaa{\end{eqnarray*}}
 \def\et{\end{thebibliography}}

\def\Orb{{\rm Orb}}

\def\Cl{{\rm Cl}}

\def\Sing{{\rm Sing}}
\def\Reg{{\rm Reg}}
\def\Ind{{\rm Ind}}

\def\supp{\operatorname{supp}}

\def\cL{{\mathcal L}}

\def\cU{{\mathcal U}}

\def\cR{{\mathcal R}}

\def\cF{{\mathcal F}}

\def\cN{{\mathcal N}}

\def\cR{{\mathcal R}}

\def\RR{{\mathbb R}}

\def\NN{{\mathbb N}}

\title[Singular star flows]{An entropy dichotomy for singular star flows}

\author{Maria Jos\' e Pacifico, Fan Yang and Jiagang Yang}
\date{\today}
\thanks{M.J.P. and J.Y. are partially supported by CNPq, FAPERJ and  PROEX-CAPES. J.Y. is partially supported by NSFC 11871487 of China. F.Y. would like to thank the hospitality of Southern University of Science and Technology of China (SUSTC), where part of this work is done.}

\address{Instituto de Matem\'atica, Universidade Federal do Rio de Janeiro, C. P. 68.530, CEP 21.945-970,  Rio de Janeiro, RJ, Brazil.}
 \email{pacifico@im.ufrj.br }
\address{Department of Mathematics, Michigan State University, East Lansing, MI, USA.}
\email{yangfa31@msu.edu}
\address{Department of Mathematics, Southern University of Science and Technology of China, Guangdong, China; and 
Departamento de Geometria, Instituto de Matem\'atica e Estat\'istica, Universidade
Federal Fluminense, Niter\'oi, Brazil.}
\email{yangjg\@@impa.br}
\setcounter{tocdepth}{2}
\begin{document}

\begin{abstract}We show that  non-trivial chain recurrent classes for  generic $C^1$ star flows satisfy a dichotomy: either they have zero topological entropy, or they must be isolated. Moreover, chain recurrent classes for generic star flows with zero entropy must be sectional hyperbolic, and cannot be detected by any non-trivial ergodic invariant probability. As a result, we show that $C^1$ generic star flows have only finitely many Lyapunov stable chain recurrent classes. 
\end{abstract}

\maketitle

\section{Introduction}
A milestone towards the final proof of the stability conjecture for diffeomorphisms is the independent discovery of the star systems by Liao~\cite{Liao81} and Ma\~n\'e~\cite{Mane}. Recall that a diffeomorphism with star condition is a $C^1$ diffeomorphism for which all the periodic orbits of all nearby diffeomorphisms are hyperbolic (orbit-wise, not uniformly); in other words, there is no periodic orbit bifurcation. It was first proven in~\cite{Franks,Liao81} that $\Omega$-stability implies the star condition, and later proven by Hayashi~\cite{H92} with the help of the connecting lemma that star condition is equivalent to Axiom A with the no-cycle condition. For smooth vector fields without singularities, the same result was obtained by Gan and Wen~\cite{GW} using a different approach.

For singular flows, that is, flows exhibiting singularities, the situation is much more complicated. Here, let us introduce the precise definition of the star condition for singular flows, taking into account the presence of the singularities.

\begin{definition}
	A $C^1$ vector field $X$ is a {\em star vector field} if there exists a neighborhood $X\in \cU\subset \mathscr{X}^1(M)$ such that for all vector fields $Y\in \cU$,  all the  critical elements of $Y$ i.e., singularities and periodic orbits, are hyperbolic. The collection of $C^1$ star vector fields is denoted by $\mathscr{X}^1_*(M)$. 
\end{definition}

It is well known that singular star flows can exhibit rich dynamics and bifurcation phenomenon comparing to their non-singular counterpart. Certain singular star flows, such as the famous Lorenz flows, do not have  hyperbolic nonwandering sets~\cite{G76} and are not structural stable~\cite{GW79}. There are also examples where the set of the periodic orbits is not dense in the nonwandering set~\cite{Ding}, and where the nonwandering set is Axiom A but no-cycle condition fails~\cite{LW}.

In order to properly describe the hyperbolicity of an invariant set that contains singularity, Morales, Pacifico and Pujals~\cite{MPP99} proposed the notion of singular hyperbolicity for three-dimensional flows. They require the flow to have a one-dimensional uniformly contracting direction, and a two-dimensional subbundle
containing the flow direction of regular points, on which the 
flow is volume expanding. This is later generalized to higher dimensions as {\em sectional hyperbolicity}~\cite{LGW, MM}. See Definition~\ref{d.sectionalhyperbolic} in the next section for more details. 

Next, let us introduce the notion of chain recurrent classes, which plays an important role in the study of the stability conjecture. Roughly speaking, chain recurrent classes are the largest non-trivial invariant set of a topological dynamical system, outside which the system behaves like a gradient system.

\begin{definition}
	For $\varepsilon>0, T>0$, a finite sequence $\{x_i\}_{i=0}^n$ is called an {\em $(\varepsilon, T)$-chain} if there exists $\{t_i\}_{i=0}^{n-1}$ such that $t_i>T$ and $d(\phi_{t_i}(x_i),x_{i+1})<\varepsilon$  for all $i=0,\ldots, n-1$. We say that $y$ is {\em chain attainable from $x$}, if there exists $T>0$ such that for all $\varepsilon>0$, there exists an $(\varepsilon, T)$-chain $\{x_i\}_{i=0}^n$ with $x_0=x$ and $x_n=y$. It is straightforward to check that chain attainability is an equivalent relation on the closure of the set ${\{x: \forall t>0, \mbox{ there is an } (\varepsilon, T)\mbox{-chain with } x_0 = x_n = x\mbox{ and }\sum_i t_i > t\}}$. Each equivalent class under this relation is then called a {\em chain recurrent class}. A chain recurrent class $C$ is  {\em non-trivial} if it is not a singularity nor a periodic orbit.
\end{definition}

As the first step towards the complete understanding of singular star flows, the following conjecture was raised  in~\cite{ZGW}.
\begin{conjecture}\label{c.1}
	(Generic) singular star flows have only finitely many chain recurrent classes, all of which are sectional hyperbolic for $X$ or $-X$.
\end{conjecture}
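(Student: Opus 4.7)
The plan is to decompose Conjecture~\ref{c.1} into two logically independent statements and attack them in turn: first, every non-trivial chain recurrent class of a generic $X\in\mathscr{X}_*^1(M)$ is sectional hyperbolic for $X$ or for $-X$; second, the total number of chain recurrent classes (trivial and non-trivial combined) is finite. The overall strategy is to exploit the star condition to obtain uniform hyperbolic estimates along periodic orbits via Liao--Mañé type inequalities, promote those estimates to a dominated splitting on the entire class through $C^1$-generic techniques (Pugh's closing lemma, Hayashi's connecting lemma, and Bonatti--Crovisier type approximation of chain recurrent classes by periodic orbits), and finally upgrade dominated splitting to sectional hyperbolicity by analysing the center bundle near the singularities that the class contains.

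For the hyperbolicity half, I would first prove that on any sufficient ''saturated'' periodic orbit set one obtains a Liao-type uniform bound on the ratio of Lyapunov exponents in some invariant splitting $E\oplus F$. In the $C^1$-generic setting, every non-trivial chain recurrent class $C$ is approximated in Hausdorff distance by periodic orbits that belong to the same class (this is where genericity is crucial), so the dominated splitting extends to all of $C$. The key step is then to rule out mixed behaviour of the singularities inside $C$: using the extended linear Poincaré flow and the classification of Lorenz-like singularities, I would argue that all singularities in $C$ have the same index type; otherwise the splitting could not be continued across them. Once this is established, one direction of the splitting must be sectionally expanding (or contracting for $-X$), yielding sectional hyperbolicity in the sense of Definition~\ref{d.sectionalhyperbolic}.

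For the finiteness half, I would exploit the dichotomy announced in the abstract: each non-trivial class either has zero topological entropy or is isolated. Isolated sectional hyperbolic classes are robustly transitive pieces, and a standard filtration argument, combined with compactness of $M$ and the uniform neighbourhood sizes provided by the star condition, bounds their number. For the zero-entropy classes one needs a separate argument: sectional hyperbolicity with vanishing entropy forces the class to be trivial from the measure-theoretic standpoint, so such a class cannot accumulate on itself along a non-trivial family of periodic orbits. Since distinct chain recurrent classes are pairwise disjoint and each occupies a definite amount of ''hyperbolic room'' dictated by the uniform domination constants, compactness of $M$ again yields finiteness.

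The main obstacle I expect is the treatment of chain recurrent classes containing multiple singularities of different indices, or singularities together with regular recurrence, which is the phenomenon that already makes the non-generic singular star theory subtle (as in the Lorenz and geometric Lorenz models). Concretely, the hardest point is establishing that across a singularity inside $C$ the center-unstable bundle can be chosen consistently so that sectional expansion survives; this requires a fine analysis of the extended linear Poincaré flow near hyperbolic singularities, together with a rigidity statement saying that generic star flows cannot produce heterodimensional cycles between singularities inside the same class. Everything else (the closing/connecting machinery, the filtration argument, and the measure-theoretic triviality of zero-entropy sectional hyperbolic classes) I view as technically involved but conceptually standard in the $C^1$-generic framework.
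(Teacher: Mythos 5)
You have set out to prove a statement that the paper itself presents only as a conjecture (due to Zhu--Gan--Wen~\cite{ZGW}), and --- more importantly --- one whose second half is already known to be \emph{false}. Bonatti and da Luz~\cite{BD, daLuz} construct a star flow with two singularities of different stable indices \emph{robustly} contained in the same chain recurrent class; such a class admits no sectional hyperbolic splitting for $X$ or for $-X$, and since the phenomenon is robust (it holds on a $C^1$-open set), no passage to a residual subset can remove it. This kills precisely the step you identify as the ``key step'' of your hyperbolicity half: the claim that one can rule out singularities of mixed index type inside a single class is not a technical difficulty to be overcome by a finer analysis of the extended linear Poincar\'e flow, it is contradicted by an example. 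The paper's Theorem~\ref{m.star} is shaped exactly around this obstruction: classes containing singularities of different indices are shown (Proposition~\ref{p.4.2}) to contain periodic orbits, to carry positive entropy, and to be \emph{isolated}, with no sectional hyperbolicity asserted; sectional hyperbolicity is obtained only for the zero-entropy classes, where all singularities are forced to have the same index.

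The finiteness half of your proposal also does not close. The dichotomy from the abstract does not, by itself, bound the number of classes: the positive-entropy classes are isolated, but the zero-entropy (``singular aperiodic'') classes are shown in the paper \emph{not} to be Lyapunov stable and are not known to be isolated, so there is no ``definite amount of hyperbolic room'' or uniform separation on which a compactness argument could run --- infinitely many such classes could a priori accumulate on one another. Indeed the paper does not prove finiteness of all chain recurrent classes; it proves finiteness only of the Lyapunov stable ones (Corollary~\ref{mc.lyapunovstable}), and explicitly reduces the remaining finiteness question to a new conjecture asserting that singular aperiodic classes do not exist. Your proposed measure-theoretic argument (``vanishing entropy forces the class to be trivial, so it cannot accumulate'') restates the difficulty rather than resolving it: the paper shows such classes support only Dirac masses at singularities, yet this alone does not preclude there being infinitely many of them.
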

A partial affirmative answer to this conjecture was obtained in~\cite{GSW}, where it is proven that for generic star flows, every non-trivial {\em Lyapunov stable} chain recurrent class is sectional hyperbolic. In fact, they obtained a complete characterization on the stable indices of singularities in the same chain recurrent classes. This result is crucial to our current paper, and is explained in more detail in Section~\ref{s.starflow}.

A breakthrough was later obtained by da Luz and Bonatti~\cite{BD, daLuz} which gives a negative answer to the second half of this conjecture. They construct an example which has two singularities with different indices robustly contained in the same chain recurrent class. As a result, such classes cannot be sectional hyperbolic. In~\cite{BD} they propose the notion of multi-singular hyperbolicity  and prove that, generically, star condition is equivalent to multi-singular hyperbolicity~\cite[Theorem 3]{BD}. To avoid technical difficulty, we will not provide the definition of multi-singular hyperbolicity here, and invite the interested readers to~\cite{CDYZ} for a simpler yet useful definition.

However, the first half of Conjecture~\ref{c.1} is still left unanswered: {\em must a (generic) singular star flow have only finitely many chain recurrent classes?} The goal of the current paper is to provide a partial answer to this question, using the recent progress in the entropy theory~\cite{PYY}:

\begin{main}\label{m.star}
There is a residual set $\cR$ of $C^1$ star flows, such that for every $X \in \cR$ and every non-trivial chain recurrent class $C$ of $X$, we have 
\begin{enumerate}
\item if $h_{top}(X|_C)>0$, then $C$ contains some periodic point $p$ and is isolated;
\item if $h_{top}(X|_C)=0$, then $C$ is sectional hyperbolic for $X$ or $-X$, and contains no periodic orbits. In this case, every ergodic invariant measure $\mu$ with $\supp(\mu)\subset C$ must satisfy $\mu = \delta_{\sigma}$ for some $\sigma\in \Sing(X)\cap C$.
\end{enumerate}
\end{main}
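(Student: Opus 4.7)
The plan is to combine three main inputs: the Bonatti--da~Luz theorem~\cite{BD} that a $C^1$-generic star flow is multi-singular hyperbolic on every non-trivial chain recurrent class; the Gan--Shi--Wen characterization~\cite{GSW} that constrains the indices of singularities in a common (Lyapunov stable) chain recurrent class; and the entropy estimates of Pacifico--Yang--Yang~\cite{PYY}, which on sectional-hyperbolic-type invariant sets bound from below the metric entropy of any ergodic measure whose support is not a singularity by the positive central exponents. I take $\cR$ to be the intersection of the corresponding residual sets together with Kupka--Smale and the generic ergodic closing lemma for flows. Fix $X\in\cR$ and a non-trivial chain recurrent class $C$.

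For Part (1), assuming $h_{top}(X|_C)>0$, the variational principle produces an ergodic $\mu$ with $\supp\mu\subset C$ and $h_\mu(X)>0$. Positive entropy rules out $\mu=\delta_\sigma$ for any $\sigma\in\Sing(X)$, so $\mu$-a.e.\ point is regular. Combined with the multi-singular hyperbolic splitting of~\cite{BD} restricted to the regular part of $\supp\mu$, this promotes $\mu$ to a hyperbolic measure; a flow-adapted Ma\~n\'e ergodic closing lemma then yields hyperbolic periodic orbits $\gamma_n$ accumulating on $\supp\mu$ with stable index matching that of $\mu$, and chain-recurrence places $\gamma_n\subset C$ for $n$ large, giving a periodic point $p\in C$. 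For isolation, I use that multi-singular hyperbolicity is a robust property holding on an open filtrating neighborhood $U$ of $C$; generically the chain recurrent class of $p$ coincides with the homoclinic class $H(p)$, which is locally maximal inside $U$ thanks to the multi-singular hyperbolic (in particular dominated) structure, so $C=H(p)=\bigcap_{t\in\RR}\phi_t(U)$.

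For Part (2), assuming $h_{top}(X|_C)=0$, the contrapositive of Part (1) immediately gives that $C$ contains no periodic orbit. In the absence of periodic orbits the reparametrization cocycle from the multi-singular framework of~\cite{BD} becomes trivial on $C$, and the multi-singular splitting $E\oplus F$ reduces to an honest dominated splitting. The Gan--Shi--Wen classification~\cite{GSW} then forces all singularities in $C$ to share a common type, pinning down either that $E$ is uniformly contracting and $F$ sectionally expanding (sectional hyperbolicity for $X$) or the reverse (for $-X$). Finally, let $\mu$ be ergodic with $\supp\mu\subset C$ and $\mu\ne\delta_\sigma$ for every $\sigma\in\Sing(X)\cap C$. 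Then $\mu$-a.e.\ point is regular and the entropy estimates of~\cite{PYY} on the sectional hyperbolic set $C$ yield $h_\mu(X)>0$, contradicting $h_{top}(X|_C)=0$. Hence every ergodic measure supported in $C$ is a Dirac mass at some $\sigma\in\Sing(X)\cap C$.

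The main obstacle is the isolation statement in Part (1). Producing a periodic orbit in $C$ is a fairly standard combination of the variational principle, ergodic closing, and the hyperbolicity supplied by multi-singular hyperbolicity; the delicate step is upgrading this to genuine isolation of $C$, which demands ruling out the accumulation of other chain recurrent classes on $C$. This requires handling several subtle points together: the semi-continuity of chain recurrent sets under perturbations, the robustness of the multi-singular hyperbolic splitting on a filtrating neighborhood of $C$, and the fact that singularities of accumulating classes must (by~\cite{GSW}) have compatible indices with those in $C$, so that they can generically be absorbed into $C$ rather than produce new pieces of recurrence nearby.
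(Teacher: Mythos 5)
Your outline captures the top-level structure (use \cite{PYY} entropy estimates, \cite{GSW} index constraints, reduce Part~(2) to a contradiction via nontrivial hyperbolic measures), but there are two genuine gaps where the real work of the paper lives, and in each place your proposal substitutes an assertion for an argument.

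\textbf{Isolation in Part (1).} You write that multi-singular hyperbolicity holds robustly on an ``open filtrating neighborhood $U$ of $C$'' and then conclude $C=H(p)=\bigcap_t\phi_t(U)$ by local maximality. This is circular: a chain recurrent class has a filtrating neighborhood \emph{if and only if} it is isolated, which is exactly what you are trying to prove. Moreover, multi-singular hyperbolicity of the chain recurrent set in the sense of~\cite{BD} does \emph{not} automatically furnish local maximality of each class (the multi-singular splitting is on a reparametrized cocycle, not an honest uniformly hyperbolic splitting, so the usual argument for local maximality of hyperbolic sets does not apply). The paper's actual mechanism is different and much more delicate: it shows (Lemma~\ref{l.multisingular}, together with the intersection Lemmas~\ref{l.cc},~\ref{l.cc2},~\ref{l.main} and the geometric analysis near Lorenz-like singularities in Section~\ref{ss.singularity}) that any periodic orbit $\Orb(p_n)$ sufficiently close to $C$ develops $(\lambda,T_0)$-backward hyperbolic times with positive density, and that a hyperbolic time falling in the cone $D^{cs}_\alpha(\sigma)$ forces $W^u(\Orb(p_n))\pitchfork W^{cs}(\sigma)\ne\emptyset$. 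Running the argument for $X$ and $-X$ homoclinically relates $p_n$ to $C$, hence $\Orb(p_n)\subset C$; only then does Lemma~\ref{l.isolated} give isolation. This is the heart of the paper and your proposal does not touch it.

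\textbf{Same index for singularities in Part (2).} You assert that in the absence of periodic orbits the ``reparametrization cocycle becomes trivial'' and that the \cite{GSW} classification then forces all singularities in $C$ to have a common type. The first claim is unsupported (the reparametrization in~\cite{BD} depends on the singularities, not on periodic orbits), and the second does not follow from~\cite{GSW} alone: what~\cite{GSW} gives is that \emph{if} all singularities in $C$ share the same index then $C$ is sectional hyperbolic, whereas if both indices $\Ind_C$ and $\Ind_C+1$ occur then no such splitting exists. To rule out the mixed case under zero entropy you need a positive statement: mixed indices \emph{force} positive entropy. The paper proves exactly this in Proposition~\ref{p.4.2}, by constructing periodic orbits that spend most of their time near $\sigma^+$ and $\sigma^-$ (Property (P')), exhibiting backward/forward hyperbolic times in the relevant cones, and applying Lemmas~\ref{l.main} and~\ref{l.cc2} to create the required transverse intersections, showing $C=H(p)$. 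Your outline has no surrogate for this step.

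A secondary issue: the periodic orbits your ergodic closing argument produces are not automatically contained in $C$; the paper gets them into $C$ via conclusion~(f) of Liao's shadowing lemma (Lemma~\ref{l.shadowing}) applied to a compact regular set inside $C$. Finally note that the paper proves Part~(2)'s measure statement the same way you sketch it, via Lemmas~\ref{l.C1pesin} and~\ref{l.shadowing}, so that portion of your outline is fine once the rest is repaired.
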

In other words, a chain recurrent class with zero topological entropy cannot be detected by any non-trivial invariant ergodic probability measure, in the sense that it can only support point masses of  singularities. 
We call such chain recurrent classes {\em singular aperiodic classes}.

Note that in the second case, $C$ cannot be Lyapunov stable due to \cite[Corollary D]{PYY}. Also note that the example constructed by Bonatti and da Luz belongs to the first case and is isolated.

An immediate corollary of Theorem~\ref{m.star} is:
\begin{maincor}\label{mc.lyapunovstable}
$C^1$ generic star flows have only finitely many Lyapunov stable chain recurrent classes.
\end{maincor}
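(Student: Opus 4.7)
The plan is to combine Theorem~\ref{m.star} with a Hausdorff accumulation argument, after first classifying the possible Lyapunov-stable chain recurrent classes. I work with $X$ in the residual set $\cR$ of Theorem~\ref{m.star}, possibly intersected with countably many additional $C^1$-generic subsets.

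The first step is to show that every Lyapunov-stable chain recurrent class of $X$ is isolated and contains a critical element. For a non-trivial class $C$, Theorem~\ref{m.star}(2) together with the remark that follows (invoking \cite[Corollary D]{PYY}) excludes case (2); hence $C$ must satisfy case (1), so $C$ is isolated and contains a periodic orbit. A trivial Lyapunov-stable class is either an attracting periodic orbit, which is automatically isolated, or a sink singularity; since every singularity of a star flow is hyperbolic, only finitely many of the latter exist. Each isolated class admits an open neighborhood $U\supset C$ with $\bigcap_{t\in\RR}\phi_t(U)=C$.

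Next, I would argue by contradiction. Suppose $X$ carries infinitely many pairwise distinct Lyapunov-stable chain recurrent classes. After discarding the finitely many sink singularities, I obtain a sequence $\{C_n\}_{n\geq 1}$ of pairwise distinct isolated Lyapunov-stable classes. By compactness of the hyperspace of closed subsets of $M$ in the Hausdorff metric, after passing to a subsequence $C_n\to K$ for some closed invariant $K\subset M$; since Hausdorff limits of chain transitive sets are chain transitive, $K$ is contained in a single chain recurrent class $C_\infty$. The key technical obstacle is to show that $C_\infty$ itself is Lyapunov stable: this is not automatic from Hausdorff convergence, and I would enlarge $\cR$ by a standard $C^1$-generic upper semi-continuity property for quasi-attractors (in the spirit of Bonatti-Crovisier), which guarantees that Hausdorff limits of Lyapunov-stable classes are Lyapunov stable. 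Granting this, the first step applies to $C_\infty$, providing an isolating open neighborhood $V\supset C_\infty$. For $n$ large, Hausdorff convergence places $C_n\subset V$, and invariance of $C_n$ under the flow yields $C_n\subset\bigcap_{t\in\RR}\phi_t(V)=C_\infty$. Since distinct chain recurrent classes are pairwise disjoint, this forces $C_n=C_\infty$ for all large $n$, contradicting the distinctness of the $C_n$.
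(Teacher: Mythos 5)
Your reduction of the problem is fine up to one point: the entire argument hinges on the claim that, after enlarging the residual set, a Hausdorff limit of Lyapunov-stable chain recurrent classes is again contained in a Lyapunov-stable class. This is a genuine gap, not a routine genericity fact. The semicontinuity results of Bonatti--Crovisier type concern the dependence of the chain classes (or quasi-attractors) on the vector field $X$, i.e.\ they compare classes of $X$ with classes of nearby systems $Y$; they say nothing about the Hausdorff limit of an infinite sequence of \emph{distinct} classes of one fixed generic $X$. Indeed, as a general $C^1$-generic statement the claim is dubious: in open sets exhibiting wild (Newhouse-type) dynamics, infinitely many periodic sinks --- which are Lyapunov-stable classes --- can accumulate on classes that are not Lyapunov stable. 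Restricting to non-trivial classes of star flows does not rescue you, because in that setting the asserted property is essentially equivalent to the corollary you are trying to prove, so invoking it is circular. Note also that the hypothetical limit class is exactly the case your first step cannot touch: it may fall into case (2) of Theorem~\ref{m.star} (zero entropy, sectional hyperbolic, aperiodic), which is \emph{not} isolated and, by the remark after Theorem~\ref{m.star}, not Lyapunov stable --- so there is no contradiction to be had from isolation alone.

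The paper's proof goes around precisely this obstruction and never needs the limit class to be Lyapunov stable. First, finiteness of periodic sinks for generic star flows (Liao~\cite{Liao}) reduces to non-trivial classes $C_n$ accumulating on a class $C$; by Theorem~\ref{m.star}, $C$ cannot have positive entropy (it would then be isolated and could not be accumulated by distinct classes), hence $C$ is sectional hyperbolic with zero entropy and some sectional expansion rate $\lambda_C>0$. Continuity of the dominated splitting makes the nearby $C_n$ sectional hyperbolic with rate $>\lambda_C/2$, and it is the Lyapunov stability of the $C_n$ themselves (not of $C$) that is exploited, via \cite[Theorem C]{PYY}, to get $h_{top}(X|_{C_n})>\lambda_C/2$. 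Robust entropy expansiveness near $C$ (\cite[Theorem A]{PYY} together with \cite{B72}) gives upper semicontinuity of metric entropy, so a limit of ergodic measures $\mu_n$ with $h_{\mu_n}\ge\lambda_C/2$ yields $h_{top}(X|_C)\ge\lambda_C/2>0$, contradicting the zero entropy of $C$. If you want to salvage your scheme, you would have to replace your unsupported semicontinuity claim by an argument of this kind; as written, the proposal is incomplete at its crucial step.
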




Now the first half of Conjecture~\ref{c.1}, namely the finiteness of chain recurrent classes for singular star flows, is reduced to the following conjecture:
\begin{conjecture}
For (generic) singular star flows, singular aperiodic classes do not exist. Consequently, (generic) singular star flows have only finitely many chain recurrent classes, all of which are homoclinic classes of some periodic orbits.
\end{conjecture}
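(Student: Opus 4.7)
The plan is to reduce the conjecture to the first assertion—the non-existence of singular aperiodic classes for generic $X\in\cR$—since the second assertion (finiteness together with the homoclinic-class identification) would then follow from Theorem~\ref{m.star}: with case~(2) excluded, every non-trivial chain recurrent class falls under case~(1), hence is isolated and contains a periodic orbit $p$, and by standard generic properties for flows (of Bonatti-Crovisier type) it coincides with the homoclinic class $H(p)$. Finiteness is then obtained in the spirit of Corollary~\ref{mc.lyapunovstable}: if infinitely many such isolated classes $C_n$ existed, a Hausdorff subsequential limit of contained periodic orbits $p_n\subset C_n$ would land inside some chain recurrent class whose isolating neighborhood must eventually absorb the nearby $p_n$, forcing distinct $C_n$ to coincide, a contradiction. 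The substantive task is therefore to exclude singular aperiodic classes.

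Let $C$ be a hypothetical singular aperiodic class for $X\in\cR$: non-trivial, sectional hyperbolic for $X$ or $-X$, with zero topological entropy, no periodic orbits, and every ergodic invariant probability supported on a singularity in $\Sing(X)\cap C$. My first step would exploit this measure rigidity topologically: by Poincar\'e recurrence applied to empirical averages, the $\omega$- and $\alpha$-limit sets of every regular $x\in C$ are contained in $\Sing(X)\cap C$, so $C$ is the union of the finitely many singularities in $\Sing(X)\cap C$ together with heteroclinic and homoclinic orbits connecting them. Second, using chain transitivity of $C$ together with the connecting lemma for flows (Hayashi, Wen-Xia, Arnaud), I would upgrade chain pseudo-connections into actual intersections of invariant manifolds $W^u(\sigma_1)\cap W^s(\sigma_2)$ for $\sigma_i\in\Sing(X)\cap C$. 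Finally, relying on the sectional hyperbolic splitting on $C$ (which is uniformly hyperbolic on the regular part), I would attempt a further small $C^1$-perturbation closing such a heteroclinic cycle into a periodic orbit; passing back to the residual $\cR$ by approximation would then contradict the absence of periodic orbits in $C$.

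The main obstacle is the closing step in the presence of singularities. Away from $\Sing(X)$, the Liao-Gan-Yang shadowing for dominated splittings converts hyperbolic pseudo-orbits into genuine periodic orbits, but the pseudo-orbits realizing chain transitivity of $C$ must spend arbitrarily long times arbitrarily close to the singularities themselves, and uniform hyperbolic shadowing breaks down precisely there. This is exactly why Theorem~\ref{m.star} by itself is not strong enough to preclude singular aperiodic classes. A successful attack would need either a singular shadowing/closing lemma tailored to sectional hyperbolic sets containing singularities (in the spirit of the geometric analysis of Lorenz-like attractors) or a refined Hayashi-type perturbation localized in small neighborhoods of $\Sing(X)\cap C$ that preserves the star condition globally. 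Constructing such tools is, I expect, where the genuine content of the conjecture lies.
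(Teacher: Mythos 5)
The statement you are addressing is not a theorem of the paper but an open conjecture: the authors state it precisely as what remains \emph{after} Theorem~\ref{m.star}, namely that the finiteness of chain recurrent classes is ``reduced to'' the non-existence of singular aperiodic classes. Your opening reduction therefore reproduces what the paper already does (Theorem~\ref{m.star} plus the generic identification $C=H(p)$ from \cite{BC,PYY} and a standard isolation/finiteness argument as in Lemma~\ref{l.isolated}), and the substantive content of the conjecture is exactly the step you leave open. Your own closing paragraph concedes that the closing/shadowing argument breaks down near the singularities and that one would need a new singular closing lemma or a Hayashi-type perturbation compatible with the star condition; this is not a proof but a restatement of the difficulty, so the proposal has a genuine, acknowledged gap at its only essential step.

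Two intermediate claims are also not sound as stated. First, from ``every ergodic invariant measure on $C$ is $\delta_\sigma$'' you infer via empirical averages that the $\omega$- and $\alpha$-limit sets of every regular $x\in C$ lie in $\Sing(X)\cap C$, hence that $C$ is a finite union of singularities and connecting orbits. Measure rigidity only controls limits of empirical measures, not topological limit sets: $\omega(x)$ can be a whole heteroclinic configuration containing regular points even when the only ergodic measures it carries are point masses at singularities, so the claimed structure of $C$ does not follow. Second, the connecting-lemma step produces intersections $W^u(\sigma_1)\cap W^s(\sigma_2)$ only for a $C^1$-perturbed vector field; transferring the resulting periodic orbit back to the original generic $X$, and guaranteeing it lies in (or forces periodic orbits into) the \emph{same} class $C$, is precisely the kind of control that fails in the presence of singularities and is not supplied here. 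In short, the proposal correctly identifies the reduction already made in the paper, but does not advance beyond the conjecture itself.
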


\subsubsection*{Organization of the paper} In Section~\ref{s.2} we provide the readers with preliminaries on singular flows: dominated splitting, (extended and scaled) linear Poincar\'e flows and Liao's shadowing lemma. We also include in Section~\ref{s.starflow} a classification on the singularities in the same chain recurrent classes, which is taken from~\cite{GSW}. Section~\ref{ss.singularity} contains a detailed analysis on the dynamics near Lorenz-like singularities and, more importantly, on the transverse intersection between invariant manifolds of nearby periodic orbits with that of points in the class. Finally, we provide the proof of Theorem~\ref{m.star} in Section~\ref{ss.starproof} by showing that every chain recurrent class with positive topological entropy must contain a periodic orbit and, consequently, contains all periodic orbits that are sufficiently close to the class. 

\section{Preliminaries}\label{s.2}
Throughout this paper, $X$ will be a $C^1$ vector field on a $d$-dimensional compact manifold $M$. Denote by $\Sing(X)$ (sometimes we also write $\Sing(\phi_t)$) the set of singularities of $X$, $\phi_t$ the flow generated by $X$, and $f = \phi_1$ the time-one map of $\phi_t$. We will write $\Phi_t$ for  the tangent flow, i.e., $\Phi_t  = D\phi_t: TM \to TM$.

This section includes the necessary background for the proof of Theorem~\ref{m.star}. Most notably, we will introduce the scaled and extended linear Poincar\'e flow by Liao~\cite{Liao, Liao96}, and the shadowing lemma which was first introduced by Liao~\cite{Liao, Liao96}, and further developed by Gan~\cite{G02}. We also collect some previously established results on generic singular star flows in Section~\ref{s.starflow}, most of which can be found in~\cite{GSW} and~\cite{LGW}.

\subsection{Dominated splitting and invariant cones}\label{s.2.1}

A dominated splitting for a flow $\phi_t$ is defined similarly to the case of diffeomorphisms. 
The invariant  set $\Lambda$ admits a {\em dominated splitting} $T_\Lambda M=E \oplus F$ if this splitting is
invariant under $\Phi_t$, and if there exist $C > 0$ and $\lambda < 1$ such that for every $x \in \Lambda$, and
every pair of unit vectors $u \in E_x$ and $v \in F_x$, one has
$$
\|(\Phi_t)_x(u)\| \le C\lambda^t\|(\Phi_t)_x(v)\| \mbox{ for } t > 0.
$$
We invite the readers to~\cite[Appendix B]{BDV} and~\cite{ArPa10} for more properties on the dominated splitting. The next lemma states the relation between dominated splitting for the flow and its time-one map.

\begin{lemma}\cite[Lemma 2.6]{PYY}
Let $\Lambda$ be an invariant set. A splitting $T_\Lambda M = E\oplus F$ is a dominated splitting for the flow $\phi_t|_\Lambda$ if and only if it
is a dominated splitting for the time-one map $f|_\Lambda$. Moreover, if $\phi_t|_\Lambda$ is transitive,
then we have either $X|_{\Lambda\setminus\Sing(X)} \subset E $ or $X|_{\Lambda\setminus\Sing(X)} \subset F.$
\end{lemma}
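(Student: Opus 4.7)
The plan is in two stages: first the equivalence of the two domination notions, then the consequence of transitivity. The ``flow-dominated implies $f$-dominated'' direction is immediate upon specializing $t=n\in\NN$. For the converse, assume $T_\Lambda M=E\oplus F$ is a dominated splitting for $f$ with constants $C,\lambda$. To extend the estimate to all real $t>0$, I would write $t=n+s$ with $n\in\NN$ (or $n=0$) and $s\in[0,1)$, factor $(\Phi_t)_x=(\Phi_s)_{\phi_n(x)}\circ(\Phi_n)_x$, and exploit the uniform two-sided bounds $K_1\le\|(\Phi_s)_y^{\pm1}\|\le K_2$ for $(s,y)\in[0,1]\times M$ (from compactness of $M$ and continuity of the tangent flow) to promote the $n$-step estimate, at the cost of a new constant and an absorbed factor $\lambda^{-1}$. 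The invariance of $E,F$ under $\Phi_t$ for non-integer $t$ is subtler: I would apply $\Phi_s$ to the splitting to obtain $\Phi_sE\oplus\Phi_sF$ over $\Lambda$, observe via flow-commutativity $\Phi_1\Phi_s=\Phi_s\Phi_1$ and the $\Phi_s$-bounds that it is again $f$-invariant and $f$-dominated with the same dimensions, and then conclude $\Phi_sE=E$ and $\Phi_sF=F$ from the uniqueness of dominated splittings with prescribed dimensions.

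For the transitivity statement, I would set $A=\{x\in\Lambda:X(x)\in E_x\}$ and $B=\{x\in\Lambda:X(x)\in F_x\}$. Both are closed in $\Lambda$ (by continuity of the bundles and of $X$) and $\phi_t$-invariant (since $\Phi_tX(x)=X(\phi_tx)$ and $\Phi_t$ preserves $E$ and $F$). On the open complement $C=\Lambda\setminus(A\cup B)$, both components in the decomposition $X(x)=X^E(x)+X^F(x)$ are nonzero, so $g(x)=\|X^E(x)\|/\|X^F(x)\|$ is a continuous positive function, and the domination yields $g(\phi_tx)\le C\lambda^tg(x)\to 0$ as $t\to+\infty$. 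The tension is between this asymptotic vanishing of $g$ along forward orbits inside $C$ and the continuity of $g$ on the open set $C$.

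If $\phi_t|_\Lambda$ is transitive, take $x_0\in\Lambda$ with dense forward orbit. If $x_0\in C$, invariance of $C$ places the whole orbit in $C$; density then gives $t_n\to+\infty$ with $\phi_{t_n}(x_0)\to y$ for any fixed $y\in C$, and continuity of $g$ at $y$ (since $C$ is open) forces $g(\phi_{t_n}x_0)\to g(y)>0$, contradicting $g(\phi_tx_0)\to 0$. Hence $C=\varnothing$. The orbit of $x_0$ (which is connected and, unless $x_0\in\Sing(X)$ giving the trivial case, avoids $\Sing(X)$) then lies in the disjoint closed union $A\cup B$, hence entirely in one of them by connectedness; density together with closedness of $A$ and $B$ in $\Lambda$ upgrades this to $X|_{\Lambda\setminus\Sing(X)}\subset E$ or $\subset F$. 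The main obstacle in the whole argument is the non-integer invariance in the equivalence step, which the uniqueness-of-splittings argument handles cleanly; the remaining computations are routine.
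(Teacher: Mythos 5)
Your proposal is correct, and since the paper itself states this lemma as a citation to \cite[Lemma 2.6]{PYY} without reproducing a proof, there is no in-text argument to contrast it with; both halves of your argument (promoting the integer-time estimate to all real $t$ via compactness bounds on $\Phi_s$ together with uniqueness of dominated splittings of a given index to get $\Phi_s$-invariance of $E$ and $F$, then using the monotone ratio $g=\|X^E\|/\|X^F\|$ and the open invariant set $C$ to handle transitivity) are the natural and standard route. One small expository remark: after the contradiction derived from assuming the transitive point $x_0$ lies in $C$, what you have literally shown is $x_0\notin C$; the conclusion $C=\varnothing$ then comes from the observation that $A\cup B$ is closed, invariant, and contains the dense orbit of $x_0$ (which you do invoke a sentence later), so the deduction is sound but its ordering is slightly compressed.
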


\begin{definition}\label{d.sectionalhyperbolic}
A compact invariant set $\Lambda$ of a flow $X$ is called {\em sectional hyperbolic}, if it admits a dominated splitting $E^s\oplus F^{cu}$, such that $E^s$ is uniformly contracting, and $F^{cu}$ is {\em sectional-expanding:}  there are constants $C,\lambda>0$ such that for every $x\in\Lambda$ and any subspace $V_x\subset F^{cu}_x$ with $\dim V_x \ge 2$, we have 
$$
|\det D\phi_t(x)|_{V_x}| \ge C e^{\lambda t} \mbox{ for all } t>0.
$$
We  call $\lambda$ the {\em sectional volume expanding rate on $F^{cu}$}. 
\end{definition}

\begin{remark}
If the dominated splitting $E\oplus F$ is sectional hyperbolic, then $\Phi_t$ on $E$ is uniformly contracting by definition. Since the flow speed $\|X(x)\|$ is bounded and thus cannot be backward exponentially expanding, we must have $X|_{\Lambda\setminus\Sing(X)} \subset F$. For more detail, see \cite[Lemma 3.10]{PYY}.
\end{remark}

Let $E\oplus F$ be a dominated splitting for the flow $\phi_t$. For $a > 0$ and $x \in M$, a {\em $(a, F)$-cone} on the tangent space $T_xM$ is defined as
$$
C_a(F_x) = \{v:  v = v_E + v_F \mbox{ where } v_E \in E, v_F\in F \mbox{ and } \|v_E\|< a\|v_F\|\}\cup\{0\}.
$$
When $a$ is sufficiently small, the cone field $C_a(F_x)$, $x \in M$, is forward invariant by $\Phi_1$,
i.e., there is $\lambda < 1$ such that for any $x \in  M$, $\Phi_1(C_a(F_x)) \subset  C_{\lambda a}(F_{f(x)})$. Similarly, we can define the $(a, E)$-cone $C_a(E_x)$, which is backward invariant by $\Phi_1$. When no
confusing is caused, we call the two families of cones by $F$ cones and $E$ cones.

For a $C^1$ disk with dimension at most $\dim F$, we say that $D$ is  {\em tangent to the $(a, F)$-cone} if for any $x \in D$, $T_xD \subset  C_a(F_x)$. The same can be said for the $(a,E)$-cone if $T_xD \subset  C_a(E_x)$ for every $x\in D$.

\subsection{Scaled linear Poincar\'e flows and a shadowing lemma by Liao}\label{ss.pesin}
In this section, $\mu$ will be a non-trivial 
ergodic measure with a dominated splitting $E\oplus F$ for the time-one map $f=\phi_1$ on  $\supp\mu$. 

The {\em linear Poincar\'e flow} $\psi_t$ is defined as following: denote the normal bundle
of $\phi_t$ over $\Lambda$ by 
$$
N_\Lambda = \bigcup_{x\in\Lambda\setminus\Sing(X)}N_x,
$$
where $N_x$ is the orthogonal complement of the flow direction $X(x)$, i.e.,
$$
N_x = \{v \in T_xM: v \perp X(x)\}.
$$
Denote the orthogonal projection of $T_xM$ to $N_x$ by $\pi_x$ and the projection of $T_\Lambda M$ to $N_\Lambda$ by $\pi$. Given $v \in N_x$ for a regular point $x \in
M \setminus  \Sing(X)$ and recalling that $\Phi_t$ is the tangent flow,  we can define $\psi_t(v)$ as the  orthogonal projection of $\Phi_t(v)$ onto $N_{\phi_t(x)}$, i.e.,
$$
\psi_t(v) = \pi_{\phi_t(x)}(\Phi_t(v)) = \Phi_t(v) -\frac{< \Phi_t(v), X(\phi_t(x)) >}{\|X(\phi_t(x))\|^2}X(\phi_t(x)),
$$
where $< .,. >$ is the inner product on $T_xM$ given by the Riemannian metric. The following is the flow version of the Oseledets theorem:
\begin{proposition}
For $\mu$ almost every $x$, there exists $k = k(x) \in \NN$ and real numbers
$$
\hat{\lambda}_1(x) > \cdots > \hat{\lambda}_k(x)
$$
and a $\psi_t$ invariant measurable splitting on the normal bundle:
$$
N_x = \hat{E}^1_x \oplus \cdots\oplus \hat{E}^k_x,
$$
such that
$$
\lim_{t\to\pm\infty}\frac1t\log \|\psi_t(v_i)\| = \hat{\lambda}_i(x)\mbox{ for every non-zero vector }v_i \in \hat{E}^i_x.
$$
\end{proposition}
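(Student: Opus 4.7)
The plan is to obtain the splitting as a direct application of the classical Oseledets multiplicative ergodic theorem, applied to the linear cocycle $\psi_1$ over the measure-preserving system $(f,\mu)$, regarded as acting on the measurable normal bundle $N$. Since $\mu$ is assumed ergodic and non-trivial, I would first observe that $\mu(\Sing(X))=0$: every singularity $\sigma$ is a fixed point of $\phi_t$, so $\mu(\{\sigma\})>0$ would, by ergodicity, force $\mu=\delta_\sigma$ and contradict non-triviality. Thus on the full $\mu$-measure invariant set $M\setminus\Sing(X)$ the assignment $x\mapsto N_x$ defines a continuous, $(d-1)$-dimensional, $\psi_t$-invariant measurable vector bundle, which is all that is needed to set up the Oseledets machinery.

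Next I would verify the integrability hypothesis $\log^+\|\psi_1^{\pm 1}\|\in L^1(\mu)$. Because $\psi_1(v)=\pi_{f(x)}(\Phi_1(v))$ is the composition of $\Phi_1$ with an orthogonal projection of operator norm at most one, $\|\psi_1\|\leq\|\Phi_1\|_{C^0(M)}$. A short computation gives $\psi_1^{-1}(w)=\pi_x(\Phi_1^{-1}(w))$ for $w\in N_{f(x)}$ (if $\psi_1(v)=w$ then $\Phi_1(v)=w+\alpha X(f(x))$, so $v-\Phi_1^{-1}(w)$ is parallel to $X(x)$), whence $\|\psi_1^{-1}\|\leq\|\Phi_1^{-1}\|_{C^0(M)}$. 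Both are finite by compactness of $M$, so $\log^+\|\psi_1^{\pm 1}\|\in L^\infty(\mu)$, and Oseledets' theorem for invertible cocycles on a measurable vector bundle produces, for $\mu$-a.e.\ $x$, numbers $\hat\lambda_1(x)>\cdots>\hat\lambda_k(x)$ together with a $\psi_1$-invariant measurable splitting $N_x=\hat E^1_x\oplus\cdots\oplus\hat E^k_x$ realizing those exponents along integer times.

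To upgrade from $n\in\ZZ$ to continuous $t\in\RR$, I would use that for $s\in[0,1]$ the norms $\|\psi_s\|$ and $\|\psi_s^{-1}\|$ are uniformly bounded on $M\setminus\Sing(X)$, again by the orthogonal projection estimate combined with compactness of $M$. Writing $t=n+s$ and using the cocycle identity $\psi_t=\psi_s\circ\psi_n$, this yields a uniform bound $|\log\|\psi_t(v_i)\|-\log\|\psi_n(v_i)\||\leq C$, so dividing by $t$ and sending $t\to\pm\infty$ recovers the same limit $\hat\lambda_i(x)$. Finally, $\psi_t$-invariance of each $\hat E^i_x$ for every $t\in\RR$ (not only $t\in\ZZ$) follows from the uniqueness of the Oseledets subspaces together with the commutation $\psi_{t+1}=\psi_1\circ\psi_t$.

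The only genuine subtlety is that the normal bundle $N$ lives on the open set $M\setminus\Sing(X)$ rather than on all of $M$; once ergodicity and non-triviality give $\mu(\Sing(X))=0$ this issue dissolves, and the remainder is a routine invocation of Oseledets. I therefore do not anticipate any substantive obstacle in the argument; the proof should be a clean measure-theoretic application of a standard tool, with the orthogonal projection bound doing all of the work in matching the hypotheses.
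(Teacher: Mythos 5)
Your proof is correct, and in fact the paper offers no proof of this proposition at all: it is quoted as the standard ``flow version'' of the Oseledets theorem, so your write-up simply supplies the routine argument the paper leaves implicit. The substantive points all check out: $\mu(\Sing(X))=0$ from ergodicity plus non-triviality, the bound $\|\psi_1\|\le\|\Phi_1\|_{C^0}$ via the orthogonal projection, the explicit inverse $\psi_1^{-1}(w)=\pi_x(\Phi_1^{-1}(w))$ (using $\Phi_1(X(x))=X(f(x))$), and the uniform bound on $\|\psi_s^{\pm1}\|$ for $s\in[0,1]$ used to pass from integer to real times --- the latter being the same observation the paper records for $\psi_t^*$ in Lemma~\ref{l.scaledflow}. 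Two minor points worth making explicit if you write this out in full: ergodicity of $\mu$ for the flow does not give ergodicity for $f=\phi_1$, but Oseledets only requires $f$-invariance, which is consistent with the statement allowing $k(x)$ and $\hat\lambda_i(x)$ to depend on $x$; and for $\psi_t$-invariance of the subspaces for all real $t$ you should first replace the Oseledets full-measure set by its flow-saturation (still of full measure) so that the dynamical characterization of $\hat E^i$ by its forward and backward exponents is available at $\phi_t(x)$ before invoking uniqueness.
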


Now we state the relation between Lyapunov exponents and the Oseledets splitting
for $\psi_t$ and for $f=\phi_1$:
\begin{theorem}
For $\mu$ almost every $x$, denote by $\lambda_1(x) > \cdots > \lambda_k(x)$ the Lyapunov
exponents and  
$$
T_xM = E^1_x \oplus\cdots\oplus E^k_x 
$$
the Oseledets splitting of $\mu$ for $f$. Then
$$
N_x = \pi_x(E^1_x)\oplus\cdots\oplus \pi_x(E^k_x)
$$ 
is the Oseledets splitting of $\mu$ for the linear Poincar\'e flow $\psi_t$. Moreover, the Lyapunov
exponents of $\mu$ (counting multiplicity) for  $\psi_t$ is the subset of the exponents
for $f$ obtained by removing one of the zero exponent which comes from the flow direction.
\end{theorem}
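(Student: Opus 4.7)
The plan is to show that the orthogonal projections $\pi_x(E^i_x)$ of the $f$-Oseledets subspaces furnish the Oseledets decomposition of the linear Poincar\'e flow $\psi_t$ on $N_x$, with matching Lyapunov exponents apart from one zero exponent corresponding to the flow direction. The argument splits into an intertwining/invariance step, a direct-sum step, and an exponent computation; uniqueness of the Oseledets splitting for $\psi_t$ then delivers the conclusion.

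First I would establish the intertwining relation $\psi_t \circ \pi_x = \pi_{\phi_t(x)} \circ \Phi_t$ on $T_xM$, which uses only $\Phi_t(X(x)) = X(\phi_t(x))$ together with $\pi_{\phi_t(x)}(X(\phi_t(x))) = 0$. The invariance $\psi_t(\pi_x(E^i_x)) = \pi_{\phi_t(x)}(E^i_{\phi_t(x)})$ is then immediate from the $\Phi_t$-invariance of the $E^i_x$. For the direct-sum statement, let $i_0$ denote the unique index with $X(x) \in E^{i_0}_x$; this is well-defined $\mu$-a.e.\ since $\mu$ is non-trivial ergodic and hence ignores $\Sing(X)$, and moreover $\lambda_{i_0}=0$ because $\|\Phi_t(X(x))\| = \|X(\phi_t(x))\|$ stays in a compact set bounded away from $0$. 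Then $\pi_x|_{E^i_x}$ is injective for $i\neq i_0$, while $\ker(\pi_x|_{E^{i_0}_x}) = \mathbb{R} X(x)$; a dimension count yields $\sum_i \dim \pi_x(E^i_x) = d-1 = \dim N_x$, and a relation $\sum_i \pi_x(v_i) = 0$ with $v_i \in E^i_x$ forces $\sum v_i \in \mathbb{R}X(x) \subset E^{i_0}_x$, which the directness of the $f$-Oseledets splitting contradicts unless every $\pi_x(v_i)$ already vanishes.

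For the exponents the starting point is the identity
$$\|\psi_t(\pi_x(v))\| = \|\Phi_t(v)\|\cdot |\sin\theta_t|, \qquad \theta_t := \angle(\Phi_t(v), X(\phi_t(x))).$$
When $v \in E^i_x$ with $i\neq i_0$, $\Phi_t(v)$ and $X(\phi_t(x))$ lie in distinct $f$-Oseledets subspaces along the orbit, so a standard subexponential angle estimate gives $\tfrac{1}{t}\log|\sin\theta_t|\to 0$; combined with $\tfrac{1}{t}\log\|\Phi_t(v)\|\to\lambda_i$ this produces Lyapunov exponent $\lambda_i$ for $\psi_t$ in direction $\pi_x(v)$. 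When $v \in E^{i_0}_x\setminus\mathbb{R}X(x)$, all Lyapunov exponents of $\Phi_t|_{E^{i_0}}$ vanish so $\|\Phi_t(v)\| = e^{o(|t|)}$, giving the upper bound immediately; for the lower bound I would use the backward intertwining $\pi_x \circ \Phi_{-t} = \psi_{-t} \circ \pi_{\phi_t(x)}$ to get $\pi_x(\Phi_{-t}(\psi_t(\pi_x(v)))) = \pi_x(v)$, whence
$$\|\pi_x(v)\| \le \|\Phi_{-t}(\psi_t(\pi_x(v)))\| \le \bigl\|\Phi_{-t}|_{E^{i_0}_{\phi_t(x)}}\bigr\|\cdot \|\psi_t(\pi_x(v))\|,$$
and the last operator norm is again $e^{o(|t|)}$ by Oseledets applied on the zero-exponent subspace $E^{i_0}$.

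Uniqueness of the Oseledets splitting for $\psi_t$ then identifies each $\pi_x(E^i_x)$ with a union of $\hat E^j_x$'s, and matching of dimensions matches multiplicities, producing the advertised pairing of exponents with precisely one zero exponent removed. The main technical obstacle is the subexponential angle estimate between distinct Oseledets subspaces in both forward and backward time, with enough uniformity to pass to the pointwise limit; this is a classical Oseledets/Pesin regularity statement but must be quoted carefully since here we work at the flow level and along the normal bundle rather than on the full tangent bundle.
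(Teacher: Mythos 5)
The paper itself states this result without proof (it is quoted as a standard fact relating the Oseledets data of the time-one map and of the linear Poincar\'e flow), so there is no internal argument to compare against; judged on its own terms, your proof has the right architecture, and its main steps are sound: the intertwining relation $\psi_t\circ\pi_x=\pi_{\phi_t(x)}\circ\Phi_t$, the directness-plus-dimension count, the exponent computation via subexponential angles, and the final identification through uniqueness of the Oseledets splitting for $\psi_t$ all work as you describe. In particular, in the lower bound for $v\in E^{i_0}_x$ you correctly use that $\psi_t(\pi_x(v))=\pi_{\phi_t(x)}(\Phi_t(v))$ lies in $E^{i_0}_{\phi_t(x)}$, and $\|\Phi_{-t}|_{E^{i_0}_{\phi_t(x)}}\|=m(\Phi_t|_{E^{i_0}_x})^{-1}=e^{o(t)}$ is indeed part of Oseledets regularity of the zero-exponent block.

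The one genuine flaw is your justification that $\lambda_{i_0}=0$ ``because $\|\Phi_t(X(x))\|=\|X(\phi_t(x))\|$ stays in a compact set bounded away from $0$''. In the singular setting of this paper that premise is false: although $\mu(\Sing(X))=0$ for a non-trivial ergodic $\mu$, the support of $\mu$ may contain singularities (this is exactly the situation for Lorenz-like classes), so for $\mu$-typical $x$ the orbit can approach $\Sing(X)$ and $\|X(\phi_t(x))\|$ need not be bounded below. The conclusion is still true and standard, and can be repaired in either of two ways: apply Birkhoff to the bounded function $g(y)=\frac{d}{dt}\big|_{t=0}\log\|X(\phi_t(y))\|$, whose forward and backward time averages coincide a.e.\ and are forced to be $\le 0$ and $\ge 0$ respectively by the uniform upper bound $\|X\|\le\max_M\|X\|$; or use Poincar\'e recurrence to a compact set of positive $\mu$-measure disjoint from $\Sing(X)$ to produce times $t_k\to\pm\infty$ along which $\|X(\phi_{t_k}(x))\|$ is bounded above and below, which pins the (existing, by regularity of $x$) limits $\frac1t\log\|\Phi_t X(x)\|$ at $0$ in both time directions and hence places $X(x)$ in the zero-exponent block $E^{i_0}_x$. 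Two cosmetic points: the angle and co-norm estimates from the discrete-time Oseledets theorem for $f=\phi_1$ transfer to real $t$ (and the $E^i$ are $\Phi_t$-invariant for non-integer $t$) because $\Phi_s$, $s\in[0,1]$, distorts norms and angles by a uniformly bounded factor together with uniqueness of the splitting; and in the last step each $\pi_x(E^i_x)$ is contained in a single $\hat E^j_x$ rather than a ``union'' of them, since all of its nonzero vectors have the same two-sided exponent $\lambda_i$, after which your dimension count completes the identification and yields the stated multiplicity statement with exactly one zero exponent removed.
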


\begin{definition}
A non-trivial measure $\mu$ is called a {\em hyperbolic measure} for the flow $\phi_t$ if it is an ergodic
measure of $\phi_t$ and all the Lyapunov exponents for the linear Poincar\'e flow $\psi_t$ are non-vanishing. In other words, if we view $\mu$ as an invariant measure for the time-one map $f$, then $\mu$ has exactly one exponent which is zero, given by the flow direction.  We call the number of the
negative exponents of $\mu$, counting multiplicity, its \emph{index}.
\end{definition}

The {\em scaled linear Poincar\'e flow}, which we denote by $\psi^*_t$, is the normalization of $\psi_t$ using the flow speed:
\begin{equation}
\psi^*_t(v) = \frac{\|X(x)\|}{\|X(\phi_t(x))\|}\psi_t(v) = \frac{\psi_t(v)}{\|\Phi_t|_{<X(x)>}\|}.
\end{equation}  

We collect the following elementary properties of $\psi_t^*$.

The next lemma is first observed by Liao~\cite{Liao}, see also~\cite{GY} and~\cite[Section 4]{PYY} for the proof. 
\begin{lemma}\label{l.scaledflow}
$\psi^*_t$ is a bounded cocycle over $N_\Lambda$ in the following sense: for any $\tau>0$, there is $C_\tau>0$ such that for any $t\in[-\tau,\tau]$,
$$
\|\psi^*_t\|\le C_\tau.
$$
Furthermore, for every non-trivial ergodic measure $\mu$, the cocycles $\psi_t$ and $\psi^*_t$ have the same Lyapunov exponents and Oseledets splitting.
\end{lemma}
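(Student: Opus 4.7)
The plan is to handle the two assertions separately. For the short-time bound $\|\psi_t^*\|\le C_\tau$, I would estimate both the orthogonal-projection factor $\psi_t$ and the flow-speed ratio $\|X(x)\|/\|X(\phi_t(x))\|$ by Gronwall-type arguments. For the equality of Lyapunov data, since $\psi_t^*$ and $\psi_t$ differ only by a positive scalar cocycle on each fiber, their invariant subspaces, and hence their Oseledets splittings, coincide automatically; the real content is to show that this scalar factor contributes zero Lyapunov exponent on every non-trivial ergodic measure.

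For the uniform bound, set $K=\sup_M\|DX\|$, finite by compactness of $M$. Since $\Phi_t$ satisfies the variational equation, Gronwall gives $\|\Phi_t\|\le e^{K|t|}$, and as $\pi_y$ is an orthogonal projection, $\|\psi_t(v)\|\le\|\Phi_t(v)\|$. Along any regular orbit one has
\[
\left|\frac{d}{dt}\log\|X(\phi_t(y))\|\right| = \frac{|\langle X, DX\cdot X\rangle|}{\|X\|^2} \le K,
\]
so $\|X(x)\|/\|X(\phi_t(x))\|\le e^{K|t|}$. Combining, $\|\psi_t^*(v)\|\le e^{2K|t|}\|v\|$, and one may take $C_\tau=e^{2K\tau}$.

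For the Lyapunov data, write $\psi_t^*(v) = c(t,x)\,\psi_t(v)$ with $c(t,x)=\|X(x)\|/\|X(\phi_t(x))\|$. It suffices to show $\frac{1}{n}\log c(n,x)\to 0$ for $\mu$-a.e.\ $x$. Set $h(y)=\log\|X(\phi_1(y))\|-\log\|X(y)\|$; the short-time estimate gives $|h|\le K$ on regular points, so $h$ is $\mu$-integrable. Telescoping yields $\frac{1}{n}\log c(n,x) = -\frac{1}{n}\sum_{k=0}^{n-1}h(\phi_k(x)) \to -\int h\,d\mu$ by Birkhoff. The delicate point is that a priori $\log\|X\|$ need not be $\mu$-integrable, so one cannot naively claim $\int h\,d\mu=0$ from invariance. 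Instead, since $\mu$ is non-trivial, there exists $\varepsilon>0$ with $\mu\{\|X\|>\varepsilon\}>0$; Poincar\'e recurrence then produces, for $\mu$-a.e.\ $x$, a subsequence $n_k\to\infty$ with $\|X(\phi_{n_k}(x))\|>\varepsilon$, which forces $\frac{1}{n_k}\log\|X(\phi_{n_k}(x))\|\to 0$. Together with the uniform upper bound $\|X\|\le\sup_M\|X\|$ and the fact that the full Birkhoff limit exists, this pins down $\int h\,d\mu=0$, completing the proof.
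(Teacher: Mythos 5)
Your argument is correct. The paper itself does not spell out a proof of this lemma (it cites Liao, Gan--Yang, and \cite[Section~4]{PYY}), but what you give is precisely the standard argument those references use: a Gronwall bound $\|\Phi_t\|\le e^{K|t|}$ together with $\left|\tfrac{d}{dt}\log\|X(\phi_t(y))\|\right|\le K$ for the short-time estimate, and then the observation that $\psi_t^*=c(t,x)\psi_t$ with $c(t,x)=\|X(x)\|/\|X(\phi_t(x))\|$ a positive scalar cocycle whose increment $h=\log\|X\|\circ\phi_1-\log\|X\|$ is bounded (hence $\mu$-integrable) even though $\log\|X\|$ itself may not be, with the vanishing of $\int h\,d\mu$ forced by Poincar\'e recurrence into a set where $\|X\|$ is bounded below. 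You correctly identify the integrability issue as the only delicate point and resolve it properly. The only things left implicit --- extending from integer $n$ to real $t$ via the boundedness from part one, and the symmetric treatment of $t\to-\infty$ --- are routine and do not constitute gaps.
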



Next we describe the (quasi-)hyperbolicity for the scaled linear Poincar\'e flow $\psi^*_t$.

\begin{definition}\label{d12}
	For $T_0>0$, $\lambda\in (0,1)$, an orbit segment $\{\phi_t(x)\}_{[0,T]}$ is called $(\lambda,T_0)$-forward contracting for the bundle $E\subset N_x$, if there exists a partition
	$$
	0=t_0<t_1<\cdots<t_n=T, \mbox{\hspace{.2cm} where } t_{i+1}-t_i\in [T_0,2T_0], 
	$$
	such that for all $k=1,\ldots,n-1$,
	\begin{equation}\label{e.hyptime}
	\prod_{i=0}^{k-1}\|\psi^*_{t_{i+1} - t_i}|_{\psi_{t_i}(E)}\| \le \lambda^k.
	\end{equation}
	Similarly, an orbit segment $\{\phi_t(x)\}_{[0,T]}$ is called $(\lambda,T_0)$-backward contracting for the bundle $E\subset N_x$, if the orbit segment $\{\phi_{-t}(\phi_T(x))\}_{[0,T]}$ is $(\lambda,T_0)$-forward contracting for the flow $-X$. 
\end{definition}

\begin{definition}\label{d11}
For $T_0>0, \lambda\in(0,1)$, the orbit segment $\{\phi_t(x)\}_{[0,T]}$ is called {\em $(\lambda, T_0)^*$ quasi-hyperbolic} with respect to a splitting $N_x = E^N_x\oplus F^N_x$ and the scaled linear Poincar\'e flow $\psi^*_t$, if it is $(\lambda,T_0)$-forward contracting for $E_x^N$, $(\lambda,T_0)$-backward contracting for $F_x^N$, and satisfies:
\begin{equation}\label{e.dom}
\frac{\|\psi^*_{t_{i+1} - t_i}|_{\psi_{t_i}(E^N_x)}\|}{m(\psi^*_{t_{i+1} - t_i}|_{\psi_{t_i}(F^N_x)})} \le \lambda^2.
\end{equation}
\end{definition}

\begin{definition}\label{d.hyperbolictime}
Assume that the scaled linear Poincar\'e flow has a dominated splitting $E\oplus F$. 
A point $x$ is called a {\em $(\lambda,T_0)$-forward hyperbolic time for the bundle $E\subset N_x$}, if the infinite orbit $\phi_{[0,+\infty)}$ is $(\lambda,T_0)$-forward contracting. In this case the partition is taken as
$$
0=t_0<t_1<\cdots<t_n<\ldots, \mbox{\hspace{.2cm} where } t_{i+1}-t_i\in [T_0,2T_0], 
$$
and~\eqref{e.hyptime} is stated for all $k\in\NN$. Similarly,
$x$ is called a {\em $(\lambda,T_0)$-backward hyperbolic time for the bundle $F\subset N_x$}, if it is a forward hyperbolic time for the bundle $F$ and for the flow $-X$.  $x$ is called a {\em two-sided hyperbolic time}, if it is both a  forward and backward hyperbolic time.
\end{definition}

The following lemma states that two consecutive orbit segments that are both $(\lambda,T_0)$-forward contracting can be ``glued together'' to form a $(\lambda,T_0)$-forward contracting orbit segment. The same can be said for backward contracting orbit segments by considering the flow $-X$. 
\begin{lemma}\label{l.glue}
	The following statements hold:
\begin{enumerate}
	\item Assume that $\{\phi_t(x)\}_{[0,T_1]}$ is  $(\lambda,T_0)$-forward contracting for the bundle $E$, and  $\{\phi_t(\phi_{T_1}(x))\}_{[0,T_2]}$ is $(\lambda,T_0)$-forward contracting for the bundle $\psi_{T_1}(E)$. Then  $\{\phi_t(x)\}_{[0,T_1+T_2]}$ is  $(\lambda,T_0)$-forward contracting for the bundle $E$.
	\item Assume that the scaled linear Poincar\'e flow has a dominated splitting $E\oplus F$. 
	Let $\{\phi_t(x)\}_{[0,T_1]}$ be  $(\lambda,T_0)$-forward contracting for the bundle $E$, and assume that $\phi_{T_1}(x)$ is a $(\lambda,T_0)$-forward hyperbolic time for the bundle $E$. Then $x$ is  a $(\lambda,T_0)$-forward hyperbolic time for the bundle $E$. 
\end{enumerate}
\end{lemma}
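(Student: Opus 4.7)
The plan is to prove both parts by explicit concatenation of partitions and a multiplicative factorization of the scaled linear Poincar\'e cocycle along the joined orbit. For part (1), let $0 = s_0 < s_1 < \cdots < s_m = T_1$ be the partition witnessing the first segment's $(\lambda,T_0)$-forward contraction for $E$, and let $0 = s'_0 < s'_1 < \cdots < s'_\ell = T_2$ be the partition for the second segment. I would define the combined partition of $[0, T_1+T_2]$ by $t_i = s_i$ for $0 \le i \le m$ and $t_{m+j} = T_1 + s'_j$ for $1\le j\le \ell$. By construction all consecutive gaps lie in $[T_0, 2T_0]$, and the total number of subintervals is $m+\ell$.

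The heart of the verification is the following factorization of the product along the glued partition: for $k > m$,
$$
\prod_{i=0}^{k-1}\|\psi^*_{t_{i+1}-t_i}|_{\psi_{t_i}(E)}\| = \Bigl(\prod_{i=0}^{m-1}\|\psi^*_{s_{i+1}-s_i}|_{\psi_{s_i}(E)}\|\Bigr)\cdot\Bigl(\prod_{i=0}^{k-m-1}\|\psi^*_{s'_{i+1}-s'_i}|_{\psi_{s'_i}(\psi_{T_1}(E))}\|\Bigr),
$$
whereas for $k\le m$ only the first factor is present. Each factor is then bounded by the appropriate power of $\lambda$ coming from the two segments' hypotheses, giving $\lambda^m\cdot\lambda^{k-m} = \lambda^k$ for $k>m$ and $\lambda^k$ directly for $k\le m$. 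The invariance of $E$ under the linear Poincar\'e flow, so that $\psi_t(E_x)\subset N_{\phi_t(x)}$ propagates consistently, is what allows the second segment's bundle $\psi_{T_1}(E_x)$ to be matched against $E$ at $\phi_{T_1}(x)$ exactly as in the hypothesis.

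Part (2) follows by the same construction, with the second (finite) partition replaced by the infinite partition $0=s'_0<s'_1<\cdots$ provided by the forward hyperbolic time hypothesis at $\phi_{T_1}(x)$. Here the dominated splitting assumption is crucial: it ensures that $E$ is a well-defined $\psi^*_t$-invariant subbundle globally, so $\psi_{T_1}(E_x) = E_{\phi_{T_1}(x)}$ and concatenation yields an infinite partition starting at $x$ with all gaps in $[T_0, 2T_0]$. The same multiplicative factorization verifies \eqref{e.hyptime} at every $k\in\NN$, which is exactly the statement that $x$ is itself a forward hyperbolic time for $E$.

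I expect no essential obstacle beyond careful indexing at the join $k=m$: one must invoke the first segment's contraction estimate at the endpoint $k=m$ rather than only at interior indices, and then track the bundle identification across the gluing. Both issues are purely book-keeping once the multiplicative structure of $\psi^*_t$ is in hand.
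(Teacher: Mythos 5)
The paper omits this proof as ``standard,'' and your concatenation argument is precisely that standard proof: join the two partitions at $T_1$, use the cocycle identity $\psi_{T_1+s'_j}=\psi_{s'_j}\circ\psi_{T_1}$ to see that the factors of the glued product beyond the join are literally the factors of the second segment, multiply the two partial-product bounds, and in part (2) use the $\psi_t$-invariance of $E$ coming from the dominated splitting to identify $\psi_{T_1}(E_x)=E_{\phi_{T_1}(x)}$ so that the infinite partition at $\phi_{T_1}(x)$ can be appended. The one point you flag at the end deserves to be stated plainly: your factorization needs the full first-segment bound $\prod_{i=0}^{m-1}\|\psi^*_{s_{i+1}-s_i}|_{\psi_{s_i}(E)}\|\le\lambda^m$, i.e.\ \eqref{e.hyptime} at $k=m$, whereas Definition~\ref{d12} as printed only asserts \eqref{e.hyptime} for $k\le n-1$; under that literal reading the gluing statement itself would fail (the last block of the first segment could be arbitrarily expanding), so the definition must be read, as in the usual Liao--Gan convention, with \eqref{e.hyptime} holding for all $k=1,\dots,n$, and with that reading your proof is complete and correct.
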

The proof is standard and thus omitted.

By the classic work of Liao~\cite{Liao}, there exists $\delta =\delta(\lambda,T_0)>0$ such that if $x$ is a $(\lambda,T_0)$-backward hyperbolic time, then $x$ has unstable manifold with size $\delta \|X(x)\|$. Similarly, if $x$ is a $(\lambda,T_0)$-forward  hyperbolic time then it has stable manifold with size $\delta \|X(x)\|$.  In both cases, we say that {\em $x$ has unstable/stable manifold up to the flow speed}.

The next lemma can be seen as a $C^1$ version of the Pesin theory for flows. The proof can be found in~\cite[Section 4]{PYY}
\begin{lemma}\label{l.C1pesin}
Let $\mu$ be a hyperbolic measure for  the flow $\phi_t$. 
For almost every ergodic component $\tilde{\mu}$ of $\mu$ with respect to $f=\phi_1$, there are $L',\eta,T_0>0$ and a compact set $\Lambda_0\subset\supp\mu\setminus\Sing(X)$ with  positive $\tilde{\mu}$ measure, such that for every $x$ satisfying $f^n(x)\in \Lambda_0$ for  $n>L'$, the orbit segment $\{\phi_t(x)\}_{[0,n]}$ is $(\eta,T_0)^*$ quasi-hyperbolic with respect to the splitting $N_x = \pi_x(E_x)\oplus \pi_x(F_x)$ and the scaled linear Poincar\'e flow $\psi^*_t$.
\end{lemma}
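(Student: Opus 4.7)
The plan is to apply Birkhoff's ergodic theorem and Pliss's lemma to the scaled linear Poincar\'e cocycle $\psi^*_t$, and to organize the resulting hyperbolic times into a compact Pesin-like block. First, I would pass from $\Phi_t$ to $\psi^*_t$ using Lemma~\ref{l.scaledflow}: $\psi_t$ and $\psi^*_t$ share the Oseledets splitting and Lyapunov exponents of $\mu$, obtained from those of $\Phi_t$ by removing the zero exponent along $X$. Since $\mu$ is hyperbolic, none of the exponents of $\psi^*_t$ vanishes, and the orthogonal projection $\pi_x(E_x) \oplus \pi_x(F_x)$ of the ambient dominated splitting is a dominated splitting for $\psi^*_t$ on $\supp\mu\setminus\Sing(X)$, with all Lyapunov exponents on $\pi_x(E_x)$ negative and all those on $\pi_x(F_x)$ positive.

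Fixing an ergodic component $\tilde\mu$ of $\mu$ with respect to $f = \phi_1$, I would choose $T_0$ large enough that $\int \log\|\psi^*_{T_0}|_{\pi(E)}\|\,d\tilde\mu < -T_0\lambda_0$ and $-\int \log m(\psi^*_{T_0}|_{\pi(F)})\,d\tilde\mu < -T_0\lambda_0$ for some $\lambda_0 > 0$, and that the analogous ratio estimate corresponding to \eqref{e.dom} has average strictly below some $\eta^2 < 1$. Birkhoff's theorem makes these inequalities hold in average along the $T_0$-orbit of $\tilde\mu$-a.e.\ $y$. Applying Pliss's lemma in both time directions yields a positive density of forward $(\eta,T_0)$-hyperbolic times for $\pi(E)$ and of backward $(\eta,T_0)$-hyperbolic times for $\pi(F)$ along the orbit of $y$. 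A Pesin--Lusin exhaustion, restricting to those $y$ where the Pliss constants are uniform and where $\|X(y)\|$ is bounded away from zero, then produces the desired compact set $\Lambda_0\subset\supp\mu\setminus\Sing(X)$ of positive $\tilde\mu$-measure on which every point is simultaneously a forward $(\eta,T_0)$-hyperbolic time for $\pi(E)$ and a backward $(\eta,T_0)$-hyperbolic time for $\pi(F)$.

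With $\Lambda_0$ in hand, the verification becomes essentially algebraic. For $x$ and $n > L'$ with $f^n(x)\in\Lambda_0$ (together with $x\in\Lambda_0$ in the natural reading of the lemma), the forward $(\eta,T_0)$-contracting condition on $\pi(E)$ for $\{\phi_t(x)\}_{[0,n]}$ is simply the truncation at time $n$ of the infinite forward hyperbolic time property at $x$; symmetrically, the backward $(\eta,T_0)$-contracting condition on $\pi(F)$ is the truncation at time $n$ of the backward hyperbolic time property at $f^n(x)$. Lemma~\ref{l.glue} lets us glue the two partitions coming from the two endpoints into a single partition of $[0,n]$ with intervals of length in $[T_0,2T_0]$, provided $L'$ is chosen larger than $4T_0$ plus a term absorbing the cocycle bound $C_{T_0}$ from Lemma~\ref{l.scaledflow}. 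The domination inequality \eqref{e.dom} then follows from the $\psi^*_t$-domination of $\pi(E)\oplus\pi(F)$, possibly after replacing $\eta$ by a smaller constant.

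The main obstacle is the interaction between $\Phi_t$ and $\psi^*_t$ along orbit pieces of $[0,n]$ that may approach $\Sing(X)$ arbitrarily closely: although $\Lambda_0$ is kept uniformly away from the singularities, the interior of the segment is not, and the rescaling factor $\|X(\phi_t(x))\|^{-1}$ in the definition of $\psi^*_t$ can be very large, threatening to break the Pliss-style partial-product estimates. Lemma~\ref{l.scaledflow} supplies bounded-cocycle control over any fixed time interval, and it is crucial that the Pliss and Pesin--Lusin steps be carried out for $\psi^*_t$ rather than $\psi_t$ or $\Phi_t$, so that the flow-speed oscillations enter only through $C_{T_0}$. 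Once the correctly scaled quantities are identified, the construction is a direct flow-adapted version of classical Pesin theory.
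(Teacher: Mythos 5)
Your proof is correct and follows the standard $C^1$ Pesin theory route for flows (Birkhoff plus Pliss applied to the scaled linear Poincaré cocycle, then a Lusin exhaustion to produce a compact Pesin-like block away from $\Sing(X)$, followed by the truncation/gluing bookkeeping with the bounded-cocycle estimate of Lemma~\ref{l.scaledflow} absorbing the boundary errors). This is precisely the approach carried out in Section 4 of \cite{PYY}, which the paper cites in lieu of a self-contained proof.
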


Next. we introduce a shadowing lemma by Liao~\cite{Liao} for the scaled linear Poincar\'e flow. See~\cite{G02} and~\cite{PYY} for the current version.

\begin{lemma}\label{l.shadowing}
Given a compact set $\Lambda_0$ with $\Lambda_0 \cap \Sing(X) = \emptyset$ and $\eta\in(0,1),T_0>0$, for any $\varepsilon>0$ there exists $\delta>0$, $L>0$ and $\delta_0>0$, such that for any $(\eta,T_0)^*$ quasi-hyperbolic orbit segment $\{\phi_t(x)\}_{[0,T]}$ with respect to a dominated splitting $N_x = E_x \oplus F_x$ and the scaled linear Poincar\'e flow $\psi^*_t$, if $x,\phi_T(x) \in \Lambda_0$ with $d(x,\phi_T(x))<\delta$, then there exists a point $p$ and a $C^1$ strictly increasing function $\theta:[0,T] \to \RR$, such that
\begin{enumerate}[label=(\alph*)]
\item $\theta(0)=0$ and $|\theta'(t)-1|<\varepsilon$;
\item $p$ is a periodic point with $\phi_{\theta(T)}(p)=p$;
\item $d(\phi_t(x), \phi_{\theta(t)}(p))\le \varepsilon\|X(\phi_t(x))\|$, for all $t\in[0,T]$;
\item $d(\phi_t(x), \phi_{\theta(t)}(p))\le Ld(x,\phi_{T}(x))$;
\item $p$ has stable and unstable manifold with size at least $\delta_0$.
\item if $\Lambda_0\subset\Lambda$ for a chain recurrent class $\Lambda$, then $p\in\Lambda$.
\end{enumerate}
\end{lemma}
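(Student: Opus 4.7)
The plan is to reduce the shadowing problem to finding a fixed point of a composition of Poincar\'e return maps between flow-normal sections whose diameter is scaled by $\|X\|$; this is the natural scale on which $\psi^*_t$ acts as the linearization. Because $\Lambda_0\cap\Sing(X)=\emptyset$ is compact, $\|X\|$ is uniformly bounded above and below on a neighborhood of $\Lambda_0$, so all constants can be taken uniform. At each partition point $\phi_{t_i}(x)$ I would introduce a normal disk $\Sigma_i\subset N_{\phi_{t_i}(x)}$ of diameter proportional to $\|X(\phi_{t_i}(x))\|$; the Poincar\'e holonomy $P_i\colon \Sigma_i\to \Sigma_{i+1}$ is well-defined because $t_{i+1}-t_i\in [T_0,2T_0]$, and after rescaling charts the derivative of $P_i$ at the reference point is precisely $\psi^*_{t_{i+1}-t_i}|_{N_{\phi_{t_i}(x)}}$.

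Under this identification, conditions~\eqref{e.hyptime} and~\eqref{e.dom} make $(P_i)_{i=0}^{n-1}$ behave, along the reference orbit, like a dominated partially hyperbolic string of maps: $E_x$-directions are contracted forward and $F_x$-directions backward, with a uniform domination gap $\lambda^2$. A Hadamard--Perron/graph-transform argument run simultaneously in both directions then produces a local unstable disk $W^u_{\mathrm{loc}}(x)\subset \Sigma_0$ tangent to the $F$-cone and a local stable disk $W^s_{\mathrm{loc}}(\phi_T(x))\subset \Sigma_n$ tangent to the $E$-cone, both of uniform size $\delta_0$ in the normalized charts. To close the orbit I use the hypothesis $d(x,\phi_T(x))<\delta$: for $\delta$ small enough, $\phi_T(x)$ lies in a flow box over $\Sigma_0$, and the flow holonomy transports $W^s_{\mathrm{loc}}(\phi_T(x))$ into $\Sigma_0$ as a stable disk lying within $O(\delta)$ of $x$. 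Uniform domination forces this transferred disk to meet $W^u_{\mathrm{loc}}(x)$ transversally at a single point $p$, whose forward flow-orbit threads the same sequence of sections and returns to itself after a time $\theta(T)\approx T$, so $p$ is periodic.

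Properties (a)--(d) follow as quantitative consequences of the graph transform: $\theta$ encodes the drift between $\Sigma_0$ and the true return time, so $|\theta'-1|=O(\delta)$; the tracking $d(\phi_t(x),\phi_{\theta(t)}(p))\le \varepsilon\|X(\phi_t(x))\|$ is immediate from the orbit of $p$ lying inside a tube of sections of radius $\varepsilon\|X\|$; and $L$ is controlled by the hyperbolicity rate $\eta$. For (e), the periodic orbit of $p$ inherits quasi-hyperbolicity from $\{\phi_t(x)\}_{[0,T]}$ through (c) and the continuity of $\psi^*_t$, so Liao's construction of invariant manifolds up to flow speed, together with the uniform lower bound on $\|X\|$ near $\Lambda_0$, yields invariant manifolds of uniform size $\delta_0$. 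Item (f) is a chain-closure argument: the orbit of $p$ stays $\varepsilon\|X\|$-close to $\{\phi_t(x)\}\subset \Lambda$, so concatenating short flow pieces with small jumps produces $(\varepsilon',T)$-pseudo-orbits linking $p$ to any point of $\Lambda_0\subset \Lambda$ for every $\varepsilon'>0$. The main obstacle is scale bookkeeping: the invariant disks have size only proportional to $\|X\|$, so every step of the graph transform must be executed in the rescaled charts with constants depending only on $(\eta,T_0,\Lambda_0,\varepsilon)$ rather than on $T$ or on the particular flow speeds encountered; this is exactly what the scaled linear Poincar\'e flow and the normalization in~\eqref{e.dom} are designed to make transparent, but care is still needed to verify that the drift between adjacent sections, the handling of the partition at the endpoints, and the transversality at the closing step remain uniformly controlled.
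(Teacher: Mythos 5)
The paper does not prove this lemma; it is quoted directly from Liao, from Gan's generalized shadowing lemma (\cite{G02}), and from \cite{PYY}, so there is no in-paper argument to compare your attempt against. Your sketch does reproduce the architecture of those references: flow-normal sections rescaled by $\|X\|$, identification of the return-map derivatives at the reference orbit with the scaled linear Poincar\'e flow, a simultaneous forward/backward graph transform yielding uniform-size stable and unstable disks in the normalized charts, closing via transverse intersection of those disks, and the scale bookkeeping needed so that all constants depend only on $(\eta,T_0,\Lambda_0,\varepsilon)$ and not on $T$ or the flow speeds encountered. Items (a)--(e) are handled correctly at this level of detail; in particular, for (e) you correctly source the uniform size $\delta_0$ from the lower bound on $\|X\|$ in a neighborhood of the compact set $\Lambda_0$.

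Your argument for item (f), however, has a gap. You infer chain attainability in both directions from the fact that $\Orb(p)$ stays $\varepsilon\|X\|$-close (equivalently, $L\,d(x,\phi_T(x))$-close) to $\phi_{[0,T]}(x)\subset\Lambda$, and you assert this produces $(\varepsilon',T)$-chains linking $p$ to $\Lambda$ for every $\varepsilon'>0$. But once $\varepsilon$ and $\delta$ are fixed, the shadowing estimate gives a fixed positive bound on $d(\Orb(p),\phi_{[0,T]}(x))$; a jump of that fixed size between $\Orb(p)$ and $\Orb(x)$ cannot be subdivided into smaller jumps by concatenating flow pieces. Since chain attainability in the paper's sense requires chains with jumps below every prescribed $\varepsilon'$, including $\varepsilon'\ll L\delta$, proximity alone does not close the argument. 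Establishing $p\in\Lambda$ genuinely needs the additional structure furnished by (e) --- the uniform-size invariant manifolds of $p$ tangent to the $E$ and $F$ cones --- together with the chain-transitivity of $\Lambda$, to produce arbitrarily fine chains in both directions; this is a separate step that should either be carried out or explicitly deferred to the cited sources rather than presented as an immediate chain-closure consequence of the shadowing distance.
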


\subsection{Extended linear Poincar\'e flows}\label{s.extended}
Note that the (scaled) linear Poincar\'e flow is only defined at the regular points $M\setminus \Sing(X)$. To solve this issue, we introduce the {\em extended linear Poincar\'e flow}, which is a useful tool developed by Liao~\cite{Liao, Liao96} and Li et al~\cite{LGW} to study hyperbolic singularities.  

Denote by 
$$
G^1 = \{L: L \mbox{ is a 1-dimensional subspace of }T_xM, x\in M\}
$$
the Grassmannian manifold of $M$. Given a $C^1$ flow $\phi_t$, the tangent flow $\Phi_t$ acts naturally on $G^1$ by mapping each $L$ to $\Phi_t(L)$. 

Write $\beta:G^1\to M$ and $\xi: TM\to M$ the bundle projection. The pullback bundle of $TM$:
$$
\beta^*(TM) = \{(L,v)\in G^1\times TM: \beta(L)= \xi(v)\} 
$$
is a vector bundle over $G^1$ with dimension $\dim M$. The tangent flow $\Phi_t$ lifts naturally to $\beta^*(TM)$:
$$
\Phi_t(L,v) =(\Phi_t(L),\Phi_t(v)).
$$

Recall that the linear Poincar\'e flow $\psi_t$  projects the image of the tangent flow to the normal bundle of the flow direction. The key observation is that this projection can be defined not only w.r.t the bundle perpendicular to the flow, but to the orthogonal complement of any section $\{L_x:x\in M\}\subset G^1$.

To be more precise, given $\cL = \{L_x:x\in M\}$ we write 
$$
\cN_\cL = \{(L_x,v)\in \beta^*(TM): v \perp L_x\}.
$$
Then $\cN$, consisting of vectors perpendicular to $L$, is a sub-bundle of $\beta^*(TM)$ over $G^1$ with dimension $\dim M-1$. The {\em extended linear Poincar\'e flow} is then defined as 
$$
\psi_t: \cN_\cL\to \cN_\cL,\\
\psi_t(L_x,v) = \pi(\Phi_t(L_x,v)), 
$$
where $\pi$ is the orthogonal projection from fibres of $\beta^*(TM)$ to the corresponding fibres of $\cN$ along $\cL$.

If we consider the the map 
$$
\zeta: \Reg(X)\to G^1
$$
that maps every regular point $x$ to the unique $L_x\in G^1$ with  $\beta(L_x)=x$ such that $L_x$ is generated by the flow direction at $x$, then the extended linear Poincar\'e on $\zeta(\Reg(X))$ can be naturally identified with the linear Poincar\'e flow defined earlier. On the other hand, given any invariant set $\Lambda$ of the flow $\phi_t$, consider the set: 
$$
\tilde{\Lambda} = \overline{\zeta(\Lambda\cap \Reg(X))}.
$$ 
If $\Lambda$ contains no singularity, then $\tilde{\Lambda}$ can be seen as a natural copy of $\Lambda$ in $G^1$ equipped with the direction of the flow on $\Lambda$. If $\sigma\in\Lambda$ is a singularity, then $\tilde{\Lambda}$ contains  all the direction in $\beta^{-1}(\sigma)$ that can be approximated by the flow direction at regular points in $\Lambda$. 
The extended Poincar\'e flow restricted to $\tilde{\Lambda}$ can be seen as  the continuous extension of the linear Poincar\'e flow on $\Lambda$. The same treatment can be applied to the scaled linear Poincar\'e flow $\psi_t^*$.

\subsection{Classification of chain recurrent classes and singularities for generic star flows}\label{s.starflow}
In this subsection we recap the main result in~\cite{GSW} on $C^1$ generic star flows. We begin with the following classification on the singularities.

\begin{definition}
Let $\sigma$ be a hyperbolic singularity contained in a non-trivial chain recurrent class $C(\sigma)$. Assume that the Lyapunov exponents of $\sigma$ are:
$$
\lambda_1\le\cdots\le \lambda_s<0<\lambda_{s+1}\le\cdots\le\lambda_{\dim M}.
$$
\end{definition}
Write $\Ind(\sigma)=s$ for the stable index of $\sigma$. We say that
\begin{enumerate}
\item $\sigma$ is {\em Lorenz-like}, if $\lambda_s+\lambda_{s+1}>0$, $\lambda_{s-1}<\lambda_s$ (this implies that $e^{\lambda_s}<1$ is a real single  eigenvalue of $\Phi_t|_{T_\sigma M}$), and $W^{ss}(\sigma)\cap\{\sigma\}=\{\sigma\}$, where $W^{ss}(\sigma)$ is the stable manifold of $\sigma$ corresponding to $\lambda_1,\ldots,\lambda_{s-1}$; regular orbits in $C(\sigma)$ can only approach $\sigma$ along $E^{cu}(\sigma)$ cone, where $E^{cu}$ is the $\Phi_t$-invariant subspace correspond to $\lambda_s,\ldots,\lambda_{\dim M}$;
\item $\sigma$ is {\em reverse Lorenz-like},\footnote{In~\cite{GSW} and~\cite{CDYZ} both cases are called Lorenz-like. Here we distinguish between the two since our main argument are different in each case. See the proof of Lemma~\ref{l.multisingular} for more details.} if it is Lorenz-like for $-X$; in this case, regular orbits in $C(\sigma)$ can only approach $\sigma$ along $E^{cs}(\sigma)$ cone. See Figure~\ref{f.1}.
\end{enumerate}
\begin{figure}[h]
    \centering
    \includegraphics[scale=0.4]{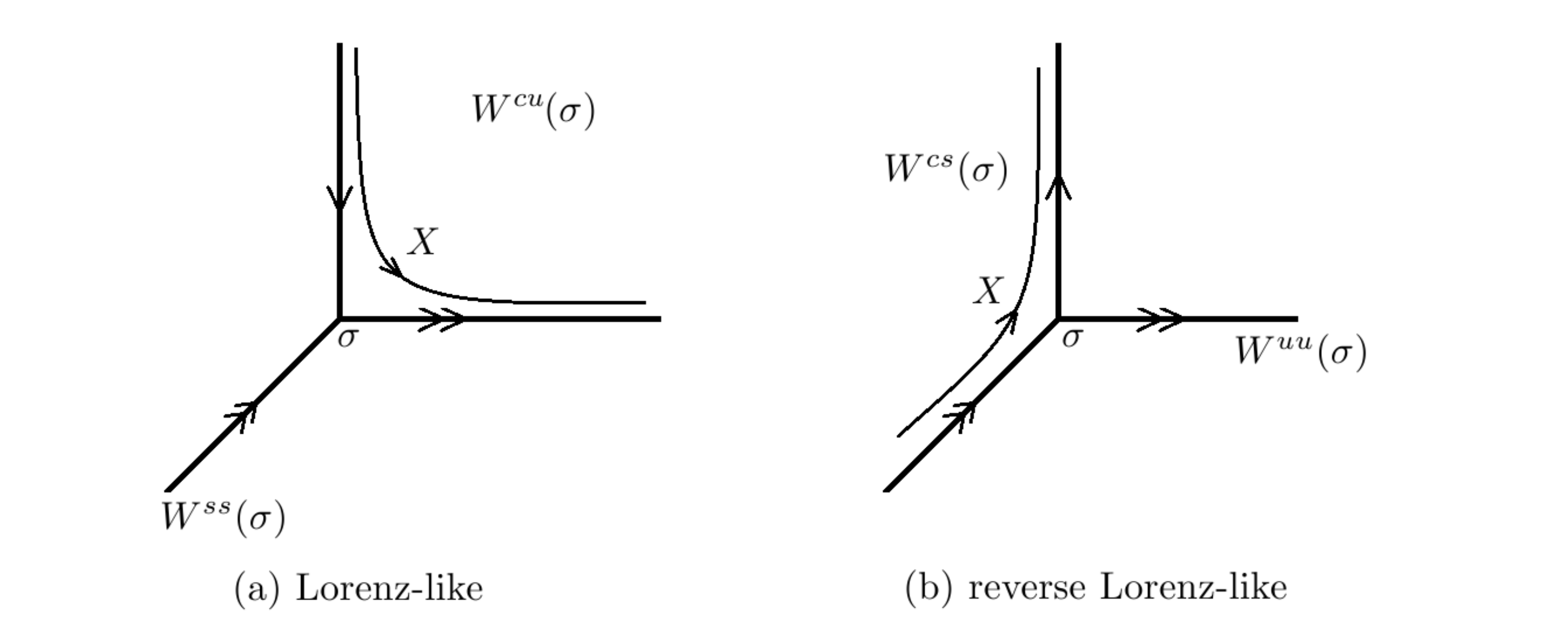}
    \caption{Lorenz-like and reverse Lorenz-like singularities}
    \label{f.1}
\end{figure}
Then it is shown in~\cite{LGW} and~\cite{GSW} that (all the theorems are labeled according to~\cite{GSW}):
\begin{itemize}
\item for a star vector field $X$, if a chain recurrent class $C$ is non-trivial, then every singularity in $C$ is either Lorenz-like or reverse Lorenz-like (Theorem 3.6); the original proof can be found in~\cite{LGW};
\item there exists a residual set $\cR\subset \mathscr{X}^1_*(M)$ such that for every $X\in\cR$, if a periodic orbit $p$ is sufficiently close to a singularity $\sigma$, then:
\begin{itemize}
\item when $\sigma$ is Lorenz-like, the index of the $p$ must be $\Ind(\sigma)-1$;
\item when $\sigma$ is reverse Lorenz-like, the index of the $p$ is $\Ind(\sigma)$ (Lemma 4.4);
\end{itemize}
furthermore, the dominated splitting on $\sigma$ induced by such periodic orbits coincides with the hyperbolic splitting on $\sigma$ (proof of Theorem 3.7);
\item For every chain recurrent class $C$ there exists an integer $\Ind_C>0$, such that every periodic orbit contained in a sufficiently small neighborhood of $C$ has  the same stable index which equals $\Ind_C$ (Theorem 5.7);
\item combine the previous two results, we see that all the singularity in $C$ has index either $\Ind_C+1$ (in which case it must be Lorenz-like) or $\Ind_{C}$ (reverse Lorenz-like);
\item if all the singularities in $C$ are Lorenz-like, then $C$ is sectional hyperbolic (Theorem 3.7); if all the singularities in $C$ are reverse Lorenz-like, then $C$ is sectional hyperbolic for $-X$ (Theorem 3.7);
\item if $C$ contains singularity with different indices (note that they can only differ by one), then there is no sectional hyperbolic splitting on $C$; one of such  examples was constructed by Bonatti and da Luz~\cite{BD, daLuz}.
\end{itemize}

\section{Flow orbits near singularities}\label{ss.singularity}
This section contains some general results on hyperbolic singularities of $C^1$ vector fields (Section~\ref{s.3.1}), and on Lorenz-like singularities for star flows (Section~\ref{s.3.2}). The key results are Lemma~\ref{l.turning.hyperbolic} and~\ref{l.timeincone}, which state that the time near a singularity where the orbit is ``make the turn'' is bounded from above. For Lorenz-like singularities of star flows, we prove in Lemma~\ref{l.cc} that for a periodic orbit $\Orb(p)$ approaching a singularity $\sigma$ while exhibiting backward hyperbolic times,  the unstable manifold of $\Orb(p)$ must transversally intersect with $W^s(\sigma)$. Then Lemma~\ref{l.cc2} deals with the case where a sequence of forward hyperbolic times approaches a Lorenz-like singularity. These two results will allow us to show in the next section that such periodic orbit must be in the same chain recurrent class as $\sigma$.

\subsection{Flow orbits near hyperbolic singularities}\label{s.3.1}
In this section we will establish some geometric properties for flow orbits in a small neighborhood of a hyperbolic singularity. Our result applies to all $C^1$ vector fields $X$ which are  not necessarily star.

For this purpose, let $\sigma$ be a hyperbolic singularity with the hyperbolic splitting $E^s_\sigma\oplus E^u_\sigma$. Without loss of generality, we can think of $\sigma$ as the origin in $\RR^n$, and assume that $E^s_\sigma$ and $E^u_\sigma$ are perpendicular (which is possible if one changes the metric). In particular, we will assume that $E^s_\sigma = \RR^{s}$ is the $s$-dimensional subspace of $\RR^n$ with the last $\dim M-s$ coordinates being zero. Here $s=\dim E^s_\sigma$ is the stable index of $\sigma$. Similarly, $E^u_\sigma$ is the subspace of $\RR^n$ where the first $s$ coordinates are zero.  As before we will write $f=\phi_1$ for the time-one map of the flow.

Since the vector filed $X$ is $C^1$, we can take a  neighborhood $U=B_r(\sigma)$ with $r$ small enough, such that: 
\begin{itemize}
	\item the flow in $U$ can be written as 
	\begin{equation}\label{e.c1flow}
		\phi_t(x) = e^{At}x+C^1\mbox{ small perturbation},
	\end{equation}
	where $A$ is a matrix  no eigenvalue on the imaginary axis;
	\item for $x\in U$, the tangent map $Df_x = \Phi_1|_{T_xM}$ are small perturbations of the hyperbolic matrix $e^A$, with eigenvalues bounded away from $1$.
\end{itemize}
For each $x\in U$, denote by  $x^s$ its distance to $E^u_\sigma$ and $x^u$ its distance to $E^{s}_\sigma$. Then for every $\alpha>0$ small, we define the {\em $\alpha$-cone on the manifold}, denote by $D^{i}_\alpha(\sigma)$, $i=s,u$, as follows:
$$
D^s_\alpha(\sigma) = \{x\in U: x^u<\alpha x^s\},\hspace{1cm}D^u_\alpha(\sigma) = \{x\in U: x^s<\alpha x^u\}.
$$

Note that the hyperbolic splitting $T_\sigma M = E^s_\sigma\oplus E^u_\sigma$ can be extended to $U$ in a natural way: for each $x\in U$, put $E^s(x)$ as the $s$-dimensional hyperplane that is parallel to $E^s_\sigma$; the same can be done for $E^u(x)$. This allows us to consider the $\alpha$-cones $C_\alpha(E^i)$, $i=s,u$, on the tangent bundle as defined in Section~\ref{s.2.1}. 
The next lemma easily follows from the smoothness of the vector field $X$ and the hyperbolicity of $\sigma$:

\begin{lemma}\label{l.cones}
There exists $L\ge1$, such that for all $\alpha>0$ small enough,
\begin{enumerate}
	\item for every $x\in \Cl(D^s_\alpha(\sigma))$, we have $X(x)\in C_{L\alpha}(E^s)$;
	\item for every $x\in U$, if $X(x)\in C_\alpha(E^s)$, we have $x\in D^s_{L\alpha}(\sigma)$.  
\end{enumerate}
Moreover, the same holds for $D^u_\alpha(\sigma)$ and $C_\alpha(E^u)$.
\end{lemma}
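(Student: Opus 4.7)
The plan is to exploit the $C^1$ linearization of $X$ near the hyperbolic singularity $\sigma$. In the coordinates fixed in the paragraph preceding the lemma, where $E^s_\sigma = \RR^s \times \{0\}$ and $E^u_\sigma = \{0\} \times \RR^{n-s}$ are orthogonal, the differential $A := DX(\sigma)$ is block-diagonal, $A = \operatorname{diag}(A^s, A^u)$, with $A^s$ contracting and $A^u$ expanding. Writing $X(x) = Ax + r(x)$, we have $\|r(x)\| \le \eta\|x\|$ on $U$, where $\eta$ can be made arbitrarily small by shrinking the radius of $U = B_r(\sigma)$. By construction, the extended subspaces $E^s(x)$ and $E^u(x)$ are parallel to $E^s_\sigma$ and $E^u_\sigma$, so the coordinate decomposition $x = (x^s, x^u)$ coincides with the $E^s \oplus E^u$ splitting of every tangent space.

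For part (1), suppose $x \in \Cl(D^s_\alpha(\sigma))$, so $\|x^u\| \le \alpha\|x^s\|$ and $\|x\| \le (1+\alpha)\|x^s\|$. Projecting $X(x) = Ax + r(x)$ onto the two factors yields
\[
\|X^u(x)\| \le \|A^u\|\,\alpha\|x^s\| + \eta(1+\alpha)\|x^s\|, \qquad
\|X^s(x)\| \ge (m(A^s) - \eta(1+\alpha))\|x^s\|,
\]
where $m(A^s)>0$ denotes the conorm. Choosing $L$ slightly larger than $2\|A^u\|/m(A^s)$, and requiring $\eta$ small enough compared to $\alpha$ to absorb the additive error, gives $\|X^u(x)\| \le L\alpha\,\|X^s(x)\|$; i.e.\ $X(x) \in C_{L\alpha}(E^s(x))$.

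For part (2), assume $X(x) \in C_\alpha(E^s(x))$, so $\|X^u(x)\| \le \alpha\|X^s(x)\|$. Substituting the linearization and using $\|A^u x^u\| \ge m(A^u)\|x^u\|$, $\|A^s x^s\| \le \|A^s\|\,\|x^s\|$, and $\|r(x)\| \le \eta\|x\|$ gives
\[
m(A^u)\|x^u\| \le \alpha \|A^s\|\,\|x^s\| + (1+\alpha)\eta\,\|x\|.
\]
Either $\|x^u\| \le L\alpha\|x^s\|$ directly and we are done, or $\|x^u\| > L\alpha\|x^s\|$, in which case $\|x\| \le (2/(L\alpha))\|x^u\|$; plugging this back into the inequality produces a contradiction as soon as $L > 2\|A^s\|/m(A^u)$ and $\eta$ is small enough relative to $\alpha$. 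The symmetric statements for $D^u_\alpha(\sigma)$ and $C_\alpha(E^u)$ follow by applying the above to $-X$, whose differential at $\sigma$ swaps the stable and unstable subspaces.

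The main technical obstacle is the interplay between $\alpha$ and the size of $U$: the perturbation $r(x)$ contributes an additive error proportional to $\eta\|x\|$, which must be dominated by a term proportional to $\alpha\|x^s\|$. This forces $\eta$ to be small compared to $\alpha$, so that $U$ has to be shrunk (or chosen sufficiently small from the start) in a manner consistent with the admissible range of $\alpha$. Once this is arranged, both inclusions reduce to elementary linear algebra, with $L$ determined up to a harmless multiplicative constant by the operator norms and conorms of the blocks $A^s$ and $A^u$.
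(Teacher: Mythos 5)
Your proof is correct and is essentially the argument the paper has in mind: the paper states this lemma without proof ("easily follows from the smoothness of $X$ and the hyperbolicity of $\sigma$"), and your linearization $X(x)=Ax+r(x)$ with $\|r(x)\|\le\eta\|x\|$ and block norm/conorm estimates is exactly that routine argument, carried out explicitly. Your observation that $\eta$ (hence the radius of $U$) must be taken small relative to $\alpha$ is a genuine and correctly handled point, consistent with how the paper later quantifies things (e.g.\ in Lemma~\ref{l.turning.hyperbolic}, where $\alpha$ is fixed first and then $r$ is taken small enough).
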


Let us fix some $\alpha>0$ small enough that will be specified at the end of this section. Note that if $x\in U\setminus (D^s_\alpha(\sigma)\cup D^u_\alpha(\sigma))$, we lose  control on the direction of $X(x)$. One can think of the region  $ U\setminus (D^s_\alpha(\sigma)\cup D^u_\alpha(\sigma))$ as the place where the flow is `making the turn' from the $E^s$ cone to the $E^u$ cone. The next lemma states that the time that an orbit segment spend in this region is uniformly bounded. To this end, we write, for each $x\in U$, 
$$
t^+(x) = \sup\{t>0:\phi_{[0,t]}(x)\subset U\},\hspace{.5cm}t^-(x) = \sup\{t>0:\phi_{[-t,0]}(x)\subset U\}.
$$
Then the orbit segment $\phi_{(-t^-,t^+)}(x)$ contains $x$ and is contained in $U$.
With slight abuse of notation, we will frequently drop the depends of $t^\pm(x)$ on $x$.

\begin{lemma}\label{l.turning.hyperbolic}
Let $\sigma$ be a hyperbolic singularity for a $C^1$ vector field $X$. Then for every $\alpha>0$ small enough, there exists $T_\alpha>0$ such that for every $r>0$ small enough and every $x\in U = B_r(x)$, the set 
$$
T(x):=\{t\in(-t^-,t^+):\phi_t(x)\notin D^s_\alpha(\sigma)\cup D^u_\alpha(\sigma)\}
$$	
has Lebesgue measure bounded by $T_\alpha$.
\end{lemma}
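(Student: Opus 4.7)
\emph{Plan.} My plan is to build a scalar Lyapunov-like function $\rho$ on the regular part of $U$ that is strictly increasing along flow orbits at a uniform positive rate, and whose preimage of a fixed interval coincides exactly with the transient region $U\setminus(D^s_\alpha(\sigma)\cup D^u_\alpha(\sigma))$. Dividing the length of that interval by the growth rate will deliver $T_\alpha$. To construct $\rho$ I would first choose $\mu>0$ with $\operatorname{Re}\operatorname{spec}(A^s)\le -2\mu$ and $\operatorname{Re}\operatorname{spec}(A^u)\ge 2\mu$, and replace the ambient inner product by an adapted one so that $\operatorname{Re}\langle A^s v,v\rangle \le -\mu\|v\|^2$ and $\operatorname{Re}\langle A^u w, w\rangle \ge \mu\|w\|^2$, while keeping $E^s_\sigma\perp E^u_\sigma$. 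Then set
\[
\rho(x) := \log\|x^u\| - \log\|x^s\| \quad\text{on } \{x\in U : x^s\neq 0,\ x^u\neq 0\}.
\]
Unwinding the cone definitions, the transient region $U\setminus(D^s_\alpha(\sigma)\cup D^u_\alpha(\sigma))$ is exactly $\{\log\alpha \le \rho \le -\log\alpha\}$, a $\rho$-window of length $-2\log\alpha$.

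\emph{Monotonicity.} Writing $X(y)=Ay+R(y)$ with $R(\sigma)=0$ and $DR(\sigma)=0$ (from $X$ being $C^1$ and $\sigma$ hyperbolic), I would compute
\[
\frac{d}{dt}\rho(\phi_t(x)) = \frac{\operatorname{Re}\langle x^u(t), X^u(\phi_t(x))\rangle}{\|x^u(t)\|^2} - \frac{\operatorname{Re}\langle x^s(t), X^s(\phi_t(x))\rangle}{\|x^s(t)\|^2}.
\]
The linear part contributes at least $2\mu$ thanks to the adapted metric. The remainder $R$ is $C^1$-small near $\sigma$, and although $\|R(y)\|/\|x^u\|$ is not a priori bounded on $U$, inside the transient strip $\|x^u\|$ and $\|x^s\|$ are of the same order as $\|y\|$ (their ratio lies in $[\alpha,\alpha^{-1}]$). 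So after shrinking $r=r(\alpha)$ the $R$-contribution is at most $\mu$, and $\tfrac{d}{dt}\rho(\phi_t(x))\ge \mu$ throughout the transient strip, along every regular orbit in $U$.

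\emph{Conclusion.} Since $t\mapsto \rho(\phi_t(x))$ is strictly increasing at rate $\ge \mu$ on $T(x)$, and $T(x)$ is contained in the $\rho$-preimage of an interval of length $-2\log\alpha$, its one-dimensional Lebesgue measure is at most $T_\alpha := -2\log\alpha/\mu$. Orbits with $x^s=0$ or $x^u=0$ remain inside $D^u_\alpha(\sigma)$ or $D^s_\alpha(\sigma)$ for the entirety of their sojourn in $U$, so their $T(x)$ is empty and the singular locus of $\rho$ never needs to be crossed.

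\emph{Main obstacle.} The delicate point is the perturbation bound in the monotonicity step: globally on $U$, the remainder $R$ could dominate the linear term in the derivative of $\rho$ if one of $\|x^u\|$, $\|x^s\|$ is much smaller than $\|y\|$ — this is precisely why the lemma bounds $|T(x)|$ rather than the total passage time through $U$. The saving grace is that the transient strip is, by construction, the locus where the two coordinate sizes are comparable, which makes the hyperbolic spectral gap $2\mu$ the leading effect and renders the $C^1$-error negligible after a final shrinking of $r$.
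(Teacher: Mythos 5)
Your argument is correct, and it reaches the same kind of bound $T_\alpha=O(-\log\alpha)$ as the paper, but by a genuinely different technical route. The paper never differentiates anything along the orbit: it uses the equivariance of the flow direction under the tangent flow, $\Phi_t(X(y))=X(\phi_t(y))$, together with Lemma~\ref{l.cones}, which guarantees that at the entry and exit times of the turning region the vector $X$ has $u$-to-$s$ component ratio between $\alpha/L$ and $L/\alpha$; the hyperbolicity of $\Phi_1$ near $\sigma$ makes this ratio grow by a uniform factor $\lambda_0>1$ per unit time, giving $t^s+t^u\le (2\log L-2\log\alpha)/\log\lambda_0$ after first reducing to orbits that enter through $D^s_\alpha(\sigma)$ and exit through $D^u_\alpha(\sigma)$. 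You instead track the ratio of the \emph{position} components through $\rho=\log\|x^u\|-\log\|x^s\|$, using an adapted (Lyapunov) inner product for the linear part and estimating the $C^1$ remainder directly, which is legitimate precisely on the transient strip because there $\|x^s\|$ and $\|x^u\|$ are both comparable to $\|x\|$ (shrinking $r=r(\alpha)$ is allowed by the order of quantifiers in the statement). Your route is self-contained (no appeal to Lemma~\ref{l.cones} or to hyperbolicity of the time-one map) and yields the extra information that the orbit crosses the strip monotonically; the paper's route avoids any nonlinear remainder estimate by exploiting structure it has already set up. Changing to the adapted metric only shifts the window $[\log\alpha,-\log\alpha]$ by a bounded amount, so the bound is unaffected.

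Two small points should be repaired. First, your closing remark that orbits with $x^s=0$ or $x^u=0$ ``remain inside'' $D^u_\alpha(\sigma)$ or $D^s_\alpha(\sigma)$ is false: the linear subspaces $E^s_\sigma$ and $E^u_\sigma$ are not invariant under the nonlinear flow (only $W^s_{\loc}(\sigma)$ and $W^u_{\loc}(\sigma)$ are). Fortunately the remark is unnecessary: a point with one vanishing component lies inside one of the open cones, hence outside $T(x)$, and on the closed transient strip both components are nonzero and comparable (unless the point is $\sigma$ itself, a degenerate case that, as in the paper, must be tacitly excluded from the statement), so $\rho$ is defined and your derivative estimate applies wherever it is actually used. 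Second, the final step ``rate $\ge\mu$ on $T(x)$ plus a window of length $-2\log\alpha$ implies $|T(x)|\le -2\log\alpha/\mu$'' implicitly requires that the orbit cannot re-enter the strip after leaving it; this does follow from your own estimate, since at any boundary crossing $\rho=\pm\log\alpha$ one has $\tfrac{d}{dt}\rho\ge\mu>0$, so the window cannot be crossed downward and $T(x)$ is a single interval, but it deserves a sentence.
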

\begin{proof}
Recall that the Lebesgue measure on the interval $(-t^-, t^+)$ corresponds to the length of open intervals. Below we will prove that the set in question is contained in a subinterval of $(-t^-, t^+)$ whose length is bounded from above.

We take a small neighborhood $x\in V\subset U$, such that for every $y\in V$ it holds
$$
\phi_{(-t^-(y),t^+(y))}(y)\cap D_{\alpha}^*(\sigma)\ne\emptyset, *=s,u.
$$
Note that if a orbit segment $\phi_{(-t^-,t^+)}(x)$ does not intersect with $V$, then $t^-+t^+$ must be bounded. Therefore we only need to prove the lemma for orbit segments that  intersect with $V$.

Shrinking $V$ if necessary, we may assume that 
$$
\phi_{(-t^-,t^+)}(x)\cap V\ne\emptyset \implies \phi_{-t^-}(x)\in D^s_\alpha(\sigma), \phi_{t^+}(x)\in D^u_\alpha(\sigma).
$$
We will also assume, by changing to a different point on the same orbit segment if necessary, that $x\in V\setminus (D^s_\alpha(\sigma)\cup D^u_\alpha(\sigma))$.
For such an orbit segment $\phi_{(-t^-,t^+)}(x)$, define 
$$t^s=t^s(x) = \sup\{t>0: \phi_{(-t^-,-t)}(x)\subset D^s_\alpha(\sigma)\},
$$
and  
$$t^u = t^u(x) = \sup\{t>0: \phi_{(t,t^+)}(x)\subset D^u_\alpha(\sigma)\}.$$ Clearly we have $T(x)\subset (-t^s, t^u)$. Below we will show that $t^s + t^u$ is bounded from above.

Writing $x^s = \phi_{-t^s}(x)$, $x^u = \phi_{t^u}(x)$, Lemma~\ref{l.cones}(2) shows that 
$$
X(x^s)\notin C_{\alpha/L}(E^s),\,\, X(x^u)\notin C_{\alpha/L}(E^u).
$$
In particular\footnote{Following our notation earlier, $X(x^*) = X(x^*)^s+X(x^*)^u\in E^s\oplus E^u$.},
\begin{equation}\label{e.3.2}
\frac{||X(x^s)^u||}{ ||X(x^s)^s||}>\alpha/L,\,\,\,\, \frac{||X(x^u)^u||}{||X(x^u)^s||}<L/\alpha.
\end{equation}

On the other hand, by the hyperbolicity of $\sigma$, there is $\lambda_0>1$ independent of $\alpha$ such that for each $x\in U\cap \phi_{-1} (U)$, and for every $v\in T_x M$ it holds that 
$$
\frac{||\Phi_1(v)^u||}{||\Phi_1(v)^s||} > \lambda_0\frac{||v^u||}{||v^s||}. 
$$
Combine this with ~\eqref{e.3.2} and the observation that $\Phi_{t^s+t^u}(X(x^s)) = X(x^u)$, we obtain 
$$
\lambda_0^{t^s+t^u} < \frac{L^2}{\alpha^2},
$$
which implies that 
$$
t^s+t^u<\frac{2\log L - 2\log \alpha}{\log\lambda_0}:= T_\alpha.
$$
Also note that $T_\alpha$ can be made uniform for $r>0$ small enough, since $L$ and $\lambda_0$ can be chosen independent of $r$. This concludes the proof of the lemma.
\end{proof}

Now let us look at this lemma from the perspective of invariant measures. Setting for $i=s,u$,
$$\cL^i(\sigma)=\{L\in G^1: \beta(L)=\sigma, L\mbox{ is parallel to } E^i \},$$ 
then $\cL^i$ are invariant under $\Phi_t|_{\beta^{-1}(\sigma)}$ (note that this is the tangent flow on $G^1$). Furthermore, the hyperbolicity of $\sigma$ implies that $\cL^{s}$ is a repelling set while $\cL^u$ is an attracting set.

Next we take a sequence of points $\{x_i\}\subset U$ with $x_i \to \sigma$ as $i\to\infty$. To simply notation, we will write $t^\pm_i = t^\pm(x_i)$. Note that $t^\pm_i\uparrow +\infty$. For every $\varepsilon>0$ small enough, the time that the orbit segments $\phi_{(-t^-_i,t^+_i)}(x_i)$ spend in the region $U\setminus B_\varepsilon(\sigma)$ is uniformly bounded in $i$. As a result, the empirical measures supported on these orbit segments behaves trivially:%
\begin{equation}\label{e.empirical}
\nu_i=\frac{1}{t^-_i+t^+_i}\int_{-t^-_i}^{t^+_i}\delta_{\phi_s(x_i)}\,ds\xrightarrow{i\to\infty}\delta_\sigma,
\end{equation}
where $\delta_\sigma$ is the atomic measure on $\sigma$.

On the other hand,  the map $\zeta: \Reg(X)\to G^1$ defined in Section~\ref{s.extended} lifts any measure $\mu$ on $M$ with $\mu(\Sing)=0$ to a measure $\zeta_*(\mu)$ on $G^1$.  Now consider the lifted empirical measures:
\begin{equation}\label{e.measurelift}
\tilde{\nu}_i = \zeta_*(\nu_i).
\end{equation}
If we take any weak-* limit $\tilde{\mu}$ of $\{\tilde{\nu}_i\}$ (the limit exists since $G^1$ is compact), $\tilde{\mu}$ must be invariant under $\Phi_t$ and is supported on $\cL^{s}\cup \cL^u\subset \beta^{-1}(\sigma)$ since $\cL^{s}\cup \cL^u$ contains the non-wandering set of $\Phi_t|_{\beta^{-1}(\sigma)}$. 
Write $\cU_\alpha^* = \overline{\zeta(D^*_\alpha(\sigma))}$ for $*=s,u$.
Observe that by Lemma~\ref{l.cones}, $\cU_\alpha^*$ each contains a neighborhood of $\cL^*(\sigma)$ in $G^1$, $*=s,u$. Furthermore, we have $\cU_\alpha^s\cap\cU_\alpha^u=\emptyset$.
Combine this  with Lemma~\ref{l.turning.hyperbolic}, we obtain the following lemma:

\begin{lemma}\label{l.turning.measure}
	For all $\alpha>0$ small, we have  $\tilde{\nu}_i(\cU_\alpha^s\cup \cU_\alpha^u)\to 1$ as $i\to N$.
\end{lemma}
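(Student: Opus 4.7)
The plan is to observe that this lemma is a direct repackaging of Lemma~\ref{l.turning.hyperbolic} in the language of the lifted empirical measures on the Grassmannian $G^1$. All the geometric content has already been isolated in that earlier lemma; what remains is a bookkeeping step involving the pushforward under $\zeta$.

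First I would unwind definitions. Since each $x_i$ is regular (because $x_i\ne\sigma$), the orbit segment $\phi_{(-t_i^-,t_i^+)}(x_i)$ lies in $\Reg(X)$, the domain of $\zeta$, so the pushforward $\tilde{\nu}_i = \zeta_*(\nu_i)$ is well defined. From the defining formula $\cU^*_\alpha = \overline{\zeta(D^*_\alpha(\sigma))}$ for $*=s,u$, one reads off the trivial inclusion
$$
D^s_\alpha(\sigma) \cup D^u_\alpha(\sigma) \;\subset\; \zeta^{-1}\bigl(\cU^s_\alpha \cup \cU^u_\alpha\bigr),
$$
which via the pushforward relation yields
$$
\tilde{\nu}_i(\cU^s_\alpha \cup \cU^u_\alpha) \;=\; \nu_i\bigl(\zeta^{-1}(\cU^s_\alpha \cup \cU^u_\alpha)\bigr) \;\ge\; \nu_i\bigl(D^s_\alpha(\sigma) \cup D^u_\alpha(\sigma)\bigr).
$$

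Next I would feed in Lemma~\ref{l.turning.hyperbolic}: for $\alpha>0$ and $r>0$ small enough, the set $T(x_i)$ of times $t\in(-t^-_i,t^+_i)$ where $\phi_t(x_i)\notin D^s_\alpha(\sigma)\cup D^u_\alpha(\sigma)$ has Lebesgue measure at most $T_\alpha$, a constant independent of $i$. Combined with the explicit form of $\nu_i$ as a normalized orbit average over $(-t_i^-, t_i^+)$, this gives
$$
\tilde{\nu}_i(\cU^s_\alpha \cup \cU^u_\alpha) \;\ge\; \frac{(t^-_i+t^+_i)-T_\alpha}{t^-_i+t^+_i}.
$$
Since $t^\pm_i \uparrow +\infty$ (recorded just before~\eqref{e.empirical}), the right-hand side tends to $1$ as $i\to\infty$, finishing the argument. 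There is essentially no obstacle to this plan: the nontrivial dynamical input is entirely in Lemma~\ref{l.turning.hyperbolic}, and the one point genuinely worth a moment's care is well-definedness of $\tilde{\nu}_i$, which is automatic from $x_i\in\Reg(X)$.
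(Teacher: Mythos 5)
Your proposal is correct and follows the same route the paper intends: the paper states the lemma as an immediate consequence of Lemma~\ref{l.turning.hyperbolic} combined with the pushforward definition of $\tilde{\nu}_i$, which is exactly the bookkeeping you carry out. The one minor point worth flagging — that the orbit segment $\phi_{(-t_i^-,t_i^+)}(x_i)$ avoids $\sigma$ so $\zeta_*$ is well defined on $\nu_i$ — you do address, and the rest is the quantitative estimate $\tilde{\nu}_i(\cU_\alpha^s\cup\cU_\alpha^u)\ge 1-T_\alpha/(t_i^-+t_i^+)\to 1$, matching the paper's argument.
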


The next lemma states that the time that orbit segments $\phi_{(-t^-_i,t^+_i)}(x_i)$ spend in $D^s_\alpha(\sigma)$ and $D^u_\alpha(\sigma)$ are comparable:

\begin{lemma}\label{l.timeincone}
	There is $a\in(0,\frac12)$ independent of $\alpha$, such that for every sequence $\{x_i\}\subset U$ with $x_i\to\sigma$ and every weak*-limit $\tilde{\mu}$ of the empirical measure $\tilde{\nu}_i$ defined using~\eqref{e.empirical} and \eqref{e.measurelift}, we have
	$$
	\tilde{\mu}(\cU_\alpha^s)>a \mbox{ and } \tilde{\mu}(\cU_\alpha^u)>a.
	$$
\end{lemma}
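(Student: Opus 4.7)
The plan is to use the linearization of $X$ at $\sigma$ to give two-sided estimates on the time the orbit spends in each of $D_\alpha^s(\sigma)$ and $D_\alpha^u(\sigma)$, and to show that both are a definite fraction of the total time the orbit spends in $U$, with a lower bound depending only on the spectrum of $A := DX(\sigma)$. Let $\lambda_s \le \Lambda_s$ (resp.\ $\lambda_u \le \Lambda_u$) denote the minimum and maximum of $|\Re(\mu)|$ over the stable (resp.\ unstable) eigenvalues $\mu$ of $A$; these four numbers are intrinsic to $\sigma$ and, crucially, do not depend on $\alpha$.

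Since $x_i \to \sigma$ and $\sigma$ is hyperbolic, both $t_i^\pm \to \infty$, and the orbit segment $\phi_{(-t_i^-, t_i^+)}(x_i)$ attains a minimal distance $\|y_*^i\| \le \|x_i\| \to 0$ to $\sigma$. I would decompose this segment into a portion inside $D_\alpha^s$ of length $\tau_s^i$, a transitional piece of length at most $T_\alpha$ (by Lemma~\ref{l.turning.hyperbolic}), and a portion inside $D_\alpha^u$ of length $\tau_u^i$. By~\eqref{e.c1flow} and after shrinking $r$, the flow in $U$ is $C^1$-close to the linear flow $e^{At}$; inside $D_\alpha^s$ the stable coordinates dominate, so the norm of the stable component contracts at a rate within $[\lambda_s - \varepsilon, \Lambda_s + \varepsilon]$ for any prescribed small $\varepsilon > 0$ and $r$ small enough. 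Letting $y_s^i$ denote the last point of the segment inside $D_\alpha^s$, this yields
\begin{equation*}
\frac{\log(r/\|y_s^i\|)}{\Lambda_s + \varepsilon} - C \;\le\; \tau_s^i \;\le\; \frac{\log(r/\|y_s^i\|)}{\lambda_s - \varepsilon} + C,
\end{equation*}
and an analogous inequality holds for $\tau_u^i$ in terms of $\|y_u^i\|$, $\lambda_u$ and $\Lambda_u$. Since $y_s^i \in \partial D_\alpha^s(\sigma)$ and the transitional piece has length at most $T_\alpha$, the quantities $\|y_s^i\|$, $\|y_u^i\|$ and $\|y_*^i\|$ are comparable up to a multiplicative constant depending only on $\alpha$ and $A$.

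Combining these estimates, $t_i^- + t_i^+ \le \log(1/\|y_*^i\|)\bigl(\tfrac{1}{\lambda_s - \varepsilon} + \tfrac{1}{\lambda_u - \varepsilon}\bigr) + O(1)$, while $\tau_s^i \ge \log(1/\|y_*^i\|)/(\Lambda_s + \varepsilon) - O(1)$ and symmetrically for $\tau_u^i$. Since $\log(1/\|y_*^i\|) \to \infty$, letting $\varepsilon \to 0$ afterwards gives
\begin{equation*}
\liminf_i \frac{\tau_s^i}{t_i^- + t_i^+} \ge \frac{\lambda_s \lambda_u}{\Lambda_s(\lambda_s + \lambda_u)}, \qquad \liminf_i \frac{\tau_u^i}{t_i^- + t_i^+} \ge \frac{\lambda_s \lambda_u}{\Lambda_u(\lambda_s + \lambda_u)},
\end{equation*}
both strictly positive and independent of $\alpha$. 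Setting $a$ equal to half of the smaller of these two and using that $\tilde{\nu}_i(\cU_\alpha^*) \ge \tau_*^i/(t_i^- + t_i^+)$ for $* = s, u$, the Portmanteau theorem applied to the closed sets $\cU_\alpha^s$ and $\cU_\alpha^u$ yields $\tilde{\mu}(\cU_\alpha^*) > a$ for every weak-$*$ limit $\tilde{\mu}$, as required.

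The main obstacle is turning the heuristic $\phi_t \approx e^{At}$ into precise uniform exponential bounds on the stable and unstable components $(\phi_t(x))^s$ and $(\phi_t(x))^u$ along the relevant pieces of the orbit. Inside $D_\alpha^s$, the domination of the stable coordinates, the invariant cone field $C_a(E^s)$ from Section~\ref{s.2.1}, and the hyperbolicity of $A$ allow one to replace the linear rates by $\lambda_s \pm \varepsilon$ and $\Lambda_s \pm \varepsilon$ via a Gronwall-type argument; once this is pinned down, the $\varepsilon$-corrections above can be absorbed into the additive constants and the estimate closes.
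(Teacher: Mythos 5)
Your proposal is correct and takes essentially the same route as the paper: both decompose the passage through $U$ into a stable-cone piece, a transitional piece of duration at most $T_\alpha$ (via Lemma~\ref{l.turning.hyperbolic}), and an unstable-cone piece, and both show that the two cone-times are comparable to $\log(1/\rho_i)$ with implicit constants independent of $\alpha$, which becomes the claimed density bound once $\log(1/\rho_i)\to\infty$ washes out the $\alpha$-dependent additive errors. The paper tracks the flow speed $\|X(\cdot)\|$ rather than the distance to $\sigma$ (the two are Lipschitz-equivalent, as it notes) and closes with the elementary observation that $\log a_i/\log b_i\to1$ when $a_i/b_i$ is pinched, whereas you give explicit two-sided eigenvalue rates and a closed-form value for $a$; these are presentational rather than substantive differences.
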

\begin{proof}
	We will show that for every orbit segment $\phi_{(-t^-_i,t^+_i)}(x_i)$, the time it spends in $D^*_\alpha$, $* = s, u$ are comparable, with a ratio that is uniform in $i$ and $\alpha$.

	First, note that since the vector field is $C^1$, the flow speed is a Lipschitz function of $d(x,\sigma)$:  there is $0<C_1<C_2$ such that 
	$$
	\frac{\|X(x)\|}{d(x,\sigma)}\in (C_1,C_2).
	$$
	To simplify notation, we write 
	$$x_{e,i} = \phi_{-t^-_i}(x_i) \mbox{, and }x_{l,i} = \phi_{t^+_i}(x_i) $$ 
	for the end points of $\phi_{(-t^-_i,t^+_i)}(x_i)$ that enter and leave the neighborhood $U$. 
	By our construction, $x_{e,i},x_{l,i}\in\partial U = \partial B_r(x)$.
	As a result, for every $i$ it holds 
	$$
	\frac{\|X(x_{e,i})\|}{\|X(x_{l,i})\|}\in\left(\frac{C_1}{C_2}, \frac{C_2}{C_1}\right).
	$$
	
	Denote by $t^0_i\in (-t^-_i,t^+_i)$ the time such that the point  $x^0_i = \phi_{t_0}(x_i)$ satisfies
	$(x^0_i)^s = (x^0_i)^u$. One could think of $x^0_i$ as the point on the orbit segment $\phi_{(-t^-_i,t^+_i)}(x_i)$ where the flow speed is the lowest.  We parse each orbit segment $\phi_{(-t^-_i,t^+_i)}(x_i)$ into three sub-segments (recall the definition of $t^s(x_i)$ and $t^u(x_i)$ in  Lemma~\ref{l.turning.hyperbolic}. To simplify notation we will write $t_i^* = t^*(x_i)$, $*=s,u$):
	\begin{itemize}
		\item write $x^s_{i} = \phi_{t^s_i}(x_i)$ for the point on $\phi_{(-t^-_i,t^+_i)}(x_i)$ that is on the boundary of $D^s_\alpha(\sigma)$; then the orbit from $x_{e,i}$ to $x^s_{i}$ is contained in $D^s_\alpha(\sigma)$;
		\item  write $x^u_{i}=\phi_{t^u_i}(x_i)$ for the point on $\phi_{(-t^-_i,t^+_i)}(x_i)$ that is on the boundary of $D^u_\alpha(\sigma)$; then the orbit from $x^u_i$ to $x_{l,i}$ is contained in $D^u_\alpha(\sigma)$;
		\item the orbit segment from $x^s_i$ to $x^u_i$ is outside $D^*_\alpha(\sigma)$, $*=s,u$; by Lemma~\ref{l.turning.hyperbolic}, $t^u_i-t^s_i\le T_\alpha$. 
	\end{itemize}
	Note that $x^0_i$ is contained in the  orbit segment from $x^s_i$ to $x^u_i$. Since the flow time from  $x^0_i$ to $x^\pm_i$ is bounded by $T_\alpha$ and the flow is $C^1$, we obtain
	$$
	\frac{\|X(x^u_i)\|}{\|X(x^s_i)\|}  = \frac{\|X(x^u_i)\|}{\|X(x^0_i)\|} \frac{\|X(x^0_i)\|}{\|X(x^s_i)\|} \in\left( ||\Phi_{T_\alpha}||^2, ||\Phi_{T_\alpha}||^{-2} \right).
	$$
	
	For the orbit segment from $x_{e,i}$ to $x^s_{i}$, Lemma~\ref{l.cones}(1) shows that $X(x)\in C_{L\alpha}(E^s)$ for each $x$ in this orbit segment. Since the flow speed is uniformly exponentially contracting in $C_{L\alpha}(E^s)$ provided that $\alpha$ and $r$ are small enough, we see that the time length of this orbit segment satisfies
	$$
	t^-_i-t^s_i =\mathcal{O}(\log\frac{\|X(x_{e,i})\|}{\|X(x_i^-)\|}).
	$$ 
	Similarly, 
	$$
	t^+_i - t^u_i = \mathcal{O}(\log\frac{\|X(x_{l,i})\|}{\|X(x^u_i)\|}).
	$$
	Then the ratio is 
	$$
	\frac{t^-_i-t^s_i}{t^+_i - t^u_i} =\mathcal{O}\left(\frac{\log\|X(x_{e,i})\|-\log\|X(x_i^-)\|}{\log\|X(x_{l,i})\|-\log\|X(x^u_i)\|}\right) =\mathcal{O}\left(\frac{\log\|X(x_i^-)\|}{\log\|X(x^u_i)\|}\right)=\mathcal{O}(1),
	$$
	where in the last equality we use the elementary fact that if $a_i\to0,b_i\to 0$ such that $a_i/b_i$ is bounded from above and away from zero, then $\log a_i/\log b_i\to 1.$
	
	Finally, note that even though the ratio $	\frac{\|X(x^u_i)\|}{\|X(x^s_i)\|} $ depends on $\alpha$, $\frac{t^-_i-t^s_i}{t^+_i - t^u_i}$ only depends on the exponential contracting/expanding rate in  $C_{L\alpha}(E^*)$, which can be made uniform for $\alpha$ small enough. 
	This finishes the proof of the lemma.
\end{proof}


We conclude this subsection with the following lemma, which will be used later to create transverse intersection between the unstable manifold of a periodic orbit and the stable manifold of the singularity $\sigma$. As before, $s$ is the stable index of $\sigma$.

\begin{lemma}\label{l.intersection}
	For each $\beta>0$ small and $\delta>0$, there is $\alpha_0>0$ such that for all $\alpha<\alpha_0$ and for every point $x\in D^s_\alpha(\sigma)$, let  $W(x)$ be a $(\dim M-s)$-dimensional submanifold that contains $x$ and is tangent to $C_\beta(E^u)$. If $\diam W(x)>\delta\|X(x)\|$, then $W(x)\pitchfork W^s(\sigma)\ne\emptyset$.  
\end{lemma}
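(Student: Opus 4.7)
The plan is to work in local coordinates on $U=B_r(\sigma)$ identifying $\sigma$ with the origin and $E^s_\sigma,E^u_\sigma$ with orthogonal coordinate subspaces, and to realise the local stable manifold as the graph of a $C^1$ map $\varphi\colon E^s_\sigma\to E^u_\sigma$ with $\varphi(0)=0$ and $D\varphi(0)=0$. By shrinking $r$ I may assume $\|\varphi\|_{C^1(U)}\le\eta$ for any prescribed $\eta>0$, to be fixed later as a function of $\beta$ and $\delta$. Throughout, $P_s$ and $P_u$ denote the orthogonal projections onto $E^s_\sigma$ and $E^u_\sigma$.

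The heart of the argument is to introduce the map $F\colon W(x)\cap U\to E^u_\sigma$ defined by $F(y)=P_u(y)-\varphi(P_s(y))$, so that $F^{-1}(0)=W(x)\cap W^s(\sigma)$, and then apply a quantitative inverse function theorem along the straight segment in $E^u_\sigma$ from $F(x)$ to $0$. For $v=v^s+v^u\in T_yW(x)\subset C_\beta(E^u)$ with $|v^s|\le\beta|v^u|$, one has $dF_y(v)=v^u-D\varphi(P_s(y))\,v^s\in E^u_\sigma$, and the cone condition together with $\|D\varphi\|\le\eta$ gives $(1-\eta\beta)|v|/\sqrt{1+\beta^2}\le|dF_y(v)|\le(1+\eta\beta)|v|$. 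Hence, for $\eta$ and $\beta$ small, $F$ is a local diffeomorphism with bounded distortion, and the transversality $W(x)\pitchfork W^s(\sigma)$ at any zero of $F$ is automatic.

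The two quantitative inputs are $|F(x)|\le x^u+|\varphi(P_s(x))|\le(\alpha+\eta)\,x^s$, coming from $x\in D^s_\alpha(\sigma)$, and $\|X(x)\|\ge C_1\,x^s$, coming from the $C^1$ hyperbolic form~\eqref{e.c1flow} of the flow near $\sigma$. First I will fix $\eta=\eta(\beta,\delta)$ small by further shrinking $U$, and then choose $\alpha_0=\alpha_0(\beta,\delta)$ small enough that $(\alpha_0+\eta)\sqrt{1+\beta^2}/(C_1(1-\eta\beta))<\delta/2$. With these choices, lifting the straight segment $\gamma(t)=(1-t)F(x)$, $t\in[0,1]$, from $E^u_\sigma$ through $F^{-1}$ produces a $C^1$ path $\widetilde\gamma$ in $W(x)$ starting at $x$, of total length at most $|F(x)|\sqrt{1+\beta^2}/(1-\eta\beta)<\delta\|X(x)\|/2$, and ending at a point $y\in W(x)$ with $F(y)=0$, i.e.\ $y\in W(x)\cap W^s(\sigma)$.

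The main obstacle will be ensuring that the lifted path $\widetilde\gamma$ does not escape $W(x)$ before reaching a zero of $F$. This is the only place where the diameter hypothesis is used: interpreting $W(x)$ as the connected local disk through $x$ (as in all forthcoming applications of the lemma, where $W(x)$ will be the local unstable manifold of a point enjoying Liao's unstable manifold estimate at the flow speed), the intrinsic distance from $x$ to the boundary of $W(x)$ is comparable to $\diam W(x)>\delta\|X(x)\|$. Since $\widetilde\gamma$ has length strictly below $\delta\|X(x)\|/2$, it remains inside $W(x)$, and the transverse intersection with $W^s(\sigma)$ follows from $dF_y$ being a linear isomorphism onto $E^u_\sigma$, which is transverse to $T_yW^s(\sigma)=\mathrm{graph}(D\varphi(P_s(y)))$.
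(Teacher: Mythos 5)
Your proof is correct and takes essentially the same geometric route as the paper: the paper also reduces to the chain $\diam W(x)>\delta\|X(x)\|\gtrsim\delta x^s$ together with $x^u<\alpha x^s$, and then invokes (without detail) the fact that a disk tangent to a narrow $E^u$-cone of reach exceeding a constant multiple of $x^u\approx d(x,W^s(\sigma))$ must cross $W^s(\sigma)$ transversally. Your inverse-function-theorem/path-lifting argument is a more careful quantitative justification of that last step (via the graph map $\varphi$ and the function $F$), and you correctly flag the implicit convention — shared by the paper — that $W(x)$ is a disk essentially centred at $x$, so that its inner radius at $x$ is comparable to $\diam W(x)$, which is indeed how the lemma is used (via Liao's unstable manifolds of size $\delta\|X(x)\|$ about $x$).
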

\begin{proof}
	Since the flow speed at $x$ is a Lipschitz function of $d(x,\sigma)$, we see that 
	$$\diam W(x)>C_1\delta d(x,\sigma)>C_1C'\delta x^s$$ 
	for some $C'>0$. Here $C_1$ is the same constant as in the previous lemma. 
	
	On the other hand, since $W(x)$ is tangent to the $\beta$-cone of $E^u$, there is $C''>0$ such that if $\diam W(x)>C'' x^u=C''d(x,W^s(\sigma))$, we must have $W(x)\pitchfork W^s(\sigma)\ne\emptyset$.

	Since $x^u<\alpha x^s $ in the cone $D^s_\alpha$, the choice of $\alpha< \alpha_0:=C_1C'\delta/C''$ guarantees that $\diam W(x)>CC'\delta x^s>C'' x^u$, therefore $W(x)\pitchfork W^s(\sigma)\ne\emptyset$. This concludes the proof of the lemma.
\end{proof}

\subsection{Near Lorenz-like singularities}\label{s.3.2}
Now we turn our attention to Lorenz-like singularities for star flows.
Assume that $\sigma$ is a Lorenz-like singularity contained in a non-trivial chain recurrent class $C=C(\sigma)$. The discussion below applies to reverse Lorenz-like singularities if one considers the flow $-X$.

Let $E_\sigma^{cs}\oplus E_\sigma^u$ be the hyperbolic splitting on $T_\sigma M$. We will write $E_\sigma^{ss}$ the subspace of $T_\sigma M$ corresponding to the exponents $\lambda_1,\ldots,\lambda_{s-1}$ and $E_\sigma^c$ the subspace of $T_\sigma M$ corresponding to the exponent $\lambda_{s}$. Then $E_\sigma^{ss}\oplus E_\sigma^c = E_\sigma^{cs}$.

The discussion in the previous sub-section applies to $\sigma$ without any modification (note that this time, we change the notation of $E^s$ to $E^{cs}$ and $\cL^s$ to $\cL^{cs}$). 
Furthermore, we can think of $\sigma$ as the origin in $\RR^n$ with three bundles $E_\sigma^{ss},E_\sigma^c$ and $E_\sigma^u$ perpendicular to one another (which is possible if one changes the metric). These bundles can be naturally extended to $U=B_r(x)$ as before. As a result, the cone field $C_\alpha(E^*)$ can be defined for $*=ss,c,u, cu$ and $cs$. 
The same can be said about the cones  on the manifold, $D_\alpha^*(\sigma)$.

It is proven in~\cite{MPP04},~\cite{LGW} and \cite{GSW} that $W^{ss}(\sigma)\cap\C(\sigma)=\{\sigma\}$. 
Furthermore, it is shown that if the orbit segment is taken inside $C(\sigma)$ (or if the orbit segment belongs to a periodic orbit that is sufficiently close to $C(\sigma)$), then it can only approach the singularity  $\sigma$ along the one-dimensional subspace $E^c$ in the following sense: write
$$
\cL^c = \{L\in G^1: \beta(L) = \sigma, L \mbox{ is parallel to } E^c\}, 
$$
then $\cL^c\subset \cL^{cs}$ consists of a single point in $G^1$. If we take $x_i\in C(\sigma)$ with $x_i\to\sigma$ and define the empirical measure $\nu_i$ and its lift $\tilde{\nu}_i$ according to~\eqref{e.empirical} and~\eqref{e.measurelift}, then any weak*-limit $\tilde{\mu}$ of $\{\tilde{\nu}_i\}$ must satisfy
$$
\supp\tilde{\mu} = \cL^c\cup\cL^u. 
$$
This observation leads to the following lemma, which is an improved version of Lemma~\ref{l.turning.measure} and~\ref{l.timeincone}.

\begin{lemma}
	Let $\{x_i\}\subset C(\sigma)$, then the conclusion of Lemma~\ref{l.turning.measure} and Lemma~\ref{l.timeincone} remain true with $\cU^s(\alpha)$ replaced by a neighborhood  $\cU^c(\alpha)$ of $\cL^c$ in $G^1$. The same can be said if we take $\{p_i\}$ to be periodic points with $p_i\to\sigma$ but not necessarily in $C(\sigma)$.
\end{lemma}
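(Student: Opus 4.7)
The plan is to deduce both refinements from the support identification $\supp\tilde{\mu}=\cL^c\cup\cL^u$ just recorded above, combined with the original Lemmas~\ref{l.turning.measure} and~\ref{l.timeincone} applied in the Lorenz-like setting (where $\cU^s_\alpha$ is relabeled as $\cU^{cs}_\alpha$ to match the splitting $E^{cs}\oplus E^u$). The measure-theoretic content is that once one knows all weak$^*$ accumulation measures sit on $\cL^c\cup\cL^u$, any open neighborhood of this set already carries almost all of the lifted empirical mass.

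First I fix $\alpha>0$ small enough that $\cU^c(\alpha)$ is an open neighborhood in $G^1$ of the single point $\cL^c$, with $\cU^c(\alpha)\subset \cU^{cs}(\alpha)$, and with $\cU^{cs}(\alpha)\cap \cU^u(\alpha)=\emptyset$. For the analogue of Lemma~\ref{l.turning.measure}, suppose for contradiction that $\tilde{\nu}_i(\cU^c(\alpha)\cup\cU^u(\alpha))\not\to 1$. Along a subsequence $i_k$ these values stay $\le 1-\varepsilon$ for some $\varepsilon>0$; by compactness of probability measures on $G^1$, extract a further weak$^*$-convergent subsequence with limit $\tilde{\mu}'$. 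Since $\cU^c(\alpha)\cup \cU^u(\alpha)$ is open, the Portmanteau theorem gives
\[
\tilde{\mu}'(\cU^c(\alpha)\cup \cU^u(\alpha))\le \liminf_{k\to\infty}\tilde{\nu}_{i_k}(\cU^c(\alpha)\cup \cU^u(\alpha))\le 1-\varepsilon,
\]
contradicting $\supp\tilde{\mu}'=\cL^c\cup\cL^u\subset \cU^c(\alpha)\cup\cU^u(\alpha)$.

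For the analogue of Lemma~\ref{l.timeincone}, the original lemma applied to the Lorenz-like singularity already provides some universal $a\in(0,\tfrac12)$ with $\tilde{\mu}(\cU^{cs}(\alpha))>a$ and $\tilde{\mu}(\cU^u(\alpha))>a$ for every weak$^*$-limit $\tilde\mu$. Since $\supp\tilde{\mu}\cap \bigl(\cU^{cs}(\alpha)\setminus \cU^c(\alpha)\bigr)\subset (\cL^c\cup\cL^u)\cap \bigl(\cU^{cs}(\alpha)\setminus \cU^c(\alpha)\bigr)=\emptyset$ (using $\cL^c\subset \cU^c(\alpha)$ and $\cL^u\cap \cU^{cs}(\alpha)=\emptyset$), one obtains $\tilde{\mu}(\cU^c(\alpha))=\tilde{\mu}(\cU^{cs}(\alpha))>a$; the inequality for $\cU^u(\alpha)$ is unchanged.

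Finally, for the case of periodic points $p_i\to \sigma$ not necessarily in $C(\sigma)$, I would invoke the generic star-flow classification recalled in Section~\ref{s.starflow}: for $p_i$ sufficiently close to the Lorenz-like singularity $\sigma$, the index of $\Orb(p_i)$ equals $\Ind(\sigma)-1$, and the dominated splitting along these periodic orbits extends continuously to the hyperbolic splitting $E^{cs}_\sigma\oplus E^u_\sigma$ on $\sigma$. Together with the strong-stable exclusion $W^{ss}(\sigma)\cap \Orb(p_i)\ne\Orb(p_i)$ (again a consequence of the star-flow results recalled in Section~\ref{s.starflow}), this forces the flow direction of $\phi_t(p_i)$ at points approaching $\sigma$ to accumulate only on $\cL^c\cup \cL^u$ inside $\beta^{-1}(\sigma)$, so the support identification $\supp\tilde{\mu}=\cL^c\cup\cL^u$ continues to hold and the arguments of the two previous paragraphs go through verbatim. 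The main obstacle I anticipate is precisely this last step: all the measure-theoretic manipulation in the earlier paragraphs is a soft consequence of Portmanteau, but verifying cleanly that periodic orbits which only accumulate on $\sigma$ (and are not a priori inside $C(\sigma)$) still approach $\sigma$ tangentially to $E^c$ rather than along some strong-stable direction in $E^{ss}$ genuinely relies on the star hypothesis, and must be drawn carefully from~\cite{LGW, GSW}.
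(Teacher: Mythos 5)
Your proof is correct and fills in, via the support identification $\supp\tilde\mu\subset\cL^c\cup\cL^u$ plus a Portmanteau argument, precisely what the paper glosses over with ``the proof remains unchanged''; the essential approach is the same. One small slip in your final paragraph: the dominated splitting along $\Orb(p_i)$ extends to $E^{ss}_\sigma\oplus E^{cu}_\sigma$ (with $\dim E^{ss}_\sigma=\Ind(\sigma)-1$), \emph{not} to $E^{cs}_\sigma\oplus E^u_\sigma$, which is dimensionally inconsistent with the index of $p_i$. The correct deduction is that the flow direction lies in the center-unstable bundle $E^{cu}$, hence accumulates only on lines inside $E^{cu}_\sigma=E^c_\sigma\oplus E^u_\sigma$, and the non-wandering set of the projectivized tangent flow $\Phi_t$ restricted to those lines is $\cL^c\cup\cL^u$; the ``strong-stable exclusion'' you invoke is the mechanism used for orbits inside $C(\sigma)$, whereas for periodic orbits merely close to $\sigma$ the dominated-splitting extension is the operative ingredient. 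Your measure-theoretic manipulations in the first two paragraphs and your overall conclusion are unaffected.
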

The proof remains unchanged and is thus omitted.

Now let us describe the hyperbolicity of the periodic orbits close to $\sigma$. 
Recall that for a regular point $x$, $N_x\subset T_xM$ is the orthogonal complement of the flow direction $X(x)$.
Since $X$ is a star vector field, every periodic orbit is hyperbolic. It is proven in~\cite[Theorem 3.7]{GSW} if $p_n$ is a sequence of periodic points near $C(\sigma)$ and approaches $\sigma$, then the hyperbolic splitting $E^s\oplus E^{cu}$ (with $E^{cu} = <X>\oplus E^u$) on $\Orb(p_n)$ extends to a dominated splitting on $\sigma$, which coincides with the splitting $E^{ss}_\sigma\oplus E_\sigma^{cu}$ where $E_\sigma^{cu} = E_\sigma^c\oplus E_\sigma^u$. Since we assume that $E^{ss}$ is perpendicular to $E^{cu}$, and the flow direction is tangent to the  cone $C_{L\alpha}(E^{c})$ as the orbit approaches the singularity $\sigma$, it follows that for $n$ large enough, the local stable manifold $W^s(p'_n)$ (which has dimension $s-1$) for $p'_n\in\Orb(p_n)\cap D^{cs}_\alpha(\sigma)$ 
is tangent to a $E^{ss}$ cone.

\begin{remark}\label{r.transverse}
It is tempting to argue that for $n$ large enough, $W^u(p'_n)$ must intersect transversally with $W^{cs}(\sigma)$. However, this is not necessarily the case: as $p_n$ gets closer to the singularity $\sigma$, the size of the invariant manifolds of $p$ will shrink with rates proportional to the flow speed, as observed by Liao~\cite{Liao}. As a result, even if we have a sequence of periodic points $p_n\to p\in W^{cs}(\sigma)$, there is still no guarantee that the unstable manifold of $p_n$ will intersect with $W^{cs}(\sigma)$.  To solve this issue, we consider the hyperbolic times of $p_n$ as defined in Definition~\ref{d12}. But before that, let us first estimate the hyperbolicity of the orbit segment inside $D^{cs}_\alpha$ and $D^u_\alpha$.
\end{remark}

\begin{lemma}\label{l.sgood}
Let $\sigma$ be a Lorenz-like singularity, then for $r$ small enough, for every orbit segment $\phi_{[0,T]}(x)\subset B_r(\sigma)\cap C(\sigma)$, the scaled linear Poincar\'e flow $\psi^*_t|_{E^{ss}_x}$ is uniformly contracting, where $E^{ss}_x\subset N_x$ is the stable subspace for the scaled linear Poincar\'e flow. The same holds true for every periodic orbit segment $\phi_{[0,T]}(p)\subset B_r(\sigma)$ but not necessarily in $C(\sigma)$.
\end{lemma}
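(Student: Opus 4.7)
The plan is to bound
\[
\|\psi^*_t|_{E^{ss}_x}\|=\frac{\|\psi_t|_{E^{ss}_x}\|}{\|\Phi_t|_{<X(x)>}\|}
\]
by estimating numerator and denominator separately: the numerator decays at a rate close to $\lambda_{s-1}$ because $E^{ss}_x$ is close to $E^{ss}_\sigma$ near $\sigma$, while the denominator is bounded below by a rate close to $\lambda_s$ because the flow direction lies in a small cone around $E^{cu}_\sigma$, whose minimal Lyapunov exponent is $\lambda_s$. Since the Lorenz-like assumption gives $\lambda_{s-1}<\lambda_s$, the ratio is uniformly exponentially contracting.

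First I would establish the required geometric input: for every $\alpha>0$ there exists $r>0$ such that for every orbit segment $\phi_{[0,T]}(x)\subset B_r(\sigma)\cap C(\sigma)$ and every $t\in[0,T]$, the flow direction $X(\phi_t(x))$ lies in the cone $C_\alpha(E^{cu}_\sigma)$ and the bundle $E^{ss}_{\phi_t(x)}\subset N_{\phi_t(x)}$ lies in the $\alpha$-cone around $E^{ss}_\sigma$ (with $E^{ss}_\sigma$, $E^{cu}_\sigma$ extended from $\sigma$ to $B_r(\sigma)$ as in Section~\ref{s.3.1}). The cone statement for $X(\phi_t(x))$ is the key point and follows from the Lorenz-like property via the extended linear Poincar\'e flow framework of Section~\ref{s.extended}: since $W^{ss}(\sigma)\cap C(\sigma)=\{\sigma\}$, regular orbits in $C(\sigma)$ accumulate on $\sigma$ only along the directions $\cL^c\cup\cL^u\subset\beta^{-1}(\sigma)$, so for $r$ small every such orbit has its lift by $\zeta$ inside $\cU^c_\alpha\cup\cU^u_\alpha$. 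The cone statement for $E^{ss}_{\phi_t(x)}$ follows from continuity of the dominated splitting at $\sigma$ together with the orthogonality $E^{ss}_\sigma\perp E^{cu}_\sigma$. For periodic orbits $\phi_{[0,T]}(p)\subset B_r(\sigma)$ not necessarily in $C(\sigma)$, the same conclusions follow from Theorem 3.7 of \cite{GSW} recalled just before Remark~\ref{r.transverse}: the hyperbolic splitting on $\Orb(p)$ extends to $E^{ss}_\sigma\oplus E^{cu}_\sigma$ at $\sigma$, and the flow direction on $\Orb(p)\cap B_r(\sigma)$ is close to $E^c_\sigma$ on $D^{cs}_\alpha(\sigma)$ or to $E^u_\sigma$ on $D^u_\alpha(\sigma)$.

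With this input, for any $\epsilon>0$ one shrinks $r$ further so that the tangent flow at points of $B_r(\sigma)$ is a $C^0$-small perturbation of $\Phi_t|_{T_\sigma M}$. Then for $v\in E^{ss}_x$,
\[
\|\psi_t(v)\|\le\|\Phi_t(v)\|\le e^{(\lambda_{s-1}+\epsilon)t}\|v\|,
\]
since $\Phi_t$ preserves the $\alpha$-cone around $E^{ss}_\sigma$ and contracts in it at rate at most $\lambda_{s-1}+\epsilon$, while the scaling step $\psi_t=\pi\circ\Phi_t$ only decreases norms. For the flow direction,
\[
\|\Phi_t|_{<X(x)>}\|=\frac{\|X(\phi_t(x))\|}{\|X(x)\|}\ge e^{(\lambda_s-\epsilon)t},
\]
since $X(\phi_t(x))$ stays in $C_\alpha(E^{cu}_\sigma)$ throughout the segment and the minimum exponent on $E^{cu}_\sigma$ is $\lambda_s$. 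Dividing gives $\|\psi^*_t|_{E^{ss}_x}\|\le e^{(\lambda_{s-1}-\lambda_s+2\epsilon)t}$, uniformly contracting once $\epsilon<(\lambda_s-\lambda_{s-1})/2$. The main obstacle is that the cone estimate for $X(\phi_t(x))$ is a priori only a limiting statement about accumulation on $\cL^c\cup\cL^u$; making it uniform along the whole orbit segment is handled by fixing $\alpha$ first (so that $2\epsilon<\lambda_s-\lambda_{s-1}$) and only then taking $r=r(\alpha)$ small enough to force every point of $B_r(\sigma)\cap C(\sigma)$ (and of the relevant periodic orbits) into $\zeta^{-1}(\cU^c_\alpha\cup\cU^u_\alpha)$.
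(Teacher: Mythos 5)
Your proposal follows essentially the same route as the paper's proof: split $\psi^*_t|_{E^{ss}}$ into the contraction of $\psi_t$ on $E^{ss}$ (rate close to $\lambda_{s-1}$) and the decay of the flow speed (rate no worse than roughly $\lambda_s$, because the Lorenz-like property confines the flow direction away from $E^{ss}$-directions), and conclude from $\lambda_{s-1}<\lambda_s$. Two small points should be repaired, though neither changes the skeleton of the argument. First, the uniform claim that every point of $B_r(\sigma)\cap C(\sigma)$ lifts into $\cU^c_\alpha\cup\cU^u_\alpha$ is too strong: during the ``turn'' the flow direction interpolates between $E^c$ and $E^u$ and exits both small cones, no matter how small $r$ is; what you actually use (and what is true, by the pointwise avoidance of $E^{ss}$-directions, a compactness argument in $G^1$, and forward invariance of the $E^{cu}$-cone) is only the confinement to $C_\alpha(E^{cu}_\sigma)$, whereas the paper keeps the two cones $D^{cs}_\alpha$, $D^u_\alpha$ and disposes of the turning stretch by the bounded-time Lemma~\ref{l.turning.hyperbolic}. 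Second, the justification ``$\Phi_t$ preserves the $\alpha$-cone around $E^{ss}_\sigma$'' is backwards: for the dominated splitting $E^{ss}\oplus E^{cu}$ the $E^{ss}$-cone is backward invariant, not forward invariant; the numerator estimate should instead rest on the $\psi_t$-invariance of the bundle $E^{ss}_x$ together with the continuity of the dominated splitting at $\sigma$ (which you do state as your geometric input), applied step by step along the orbit segment so that at each time the vector lies in a thin cone about $E^{ss}_\sigma$ and is contracted by the almost-linear tangent flow at rate at most $\lambda_{s-1}+\epsilon$, the orthogonal projection to $N$ only decreasing norms.
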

\begin{proof}
We only need to estimate
$$
\psi^*_t(v)=\frac{\|X(x)\|}{\|X(\phi_t(x))\|}\psi_t|_{E^{ss}_x}(v),
$$
for $v\in E^{ss}_x$.

We take $\varepsilon>0$ small enough such that $\lambda_{s-1}+2\varepsilon < \lambda_s-\varepsilon <0 $ (recall that $\lambda^s<0$ is the largest negative exponent of $\sigma$). If we take $r>0$ small enough, then inside $B_r(\sigma)$ we have 
$$
\|\psi_t|_{E^{ss}_x}(v)\|\le e^{(\lambda_{s-1}+\varepsilon)t}\|v\|.
$$
On the other hand, for $\frac{\|X(x)\|}{\|X(\phi_t(x))\|}$ we have (in the worst case scenario, where the flow direction is tangent to the $E^c$ cone):
$$
\|X(\phi_t(x))\|\ge e^{\lambda_s-\varepsilon }\|X(x)\|.
$$
Indeed the flow speed is expanding while the direction is tangent to the $E^u$ cone. While the orbit is neither  in $D^{cs}_\alpha$  nor in the $D^u_\alpha$, we lose all the estimate. However, the length of such orbit segment is uniformly bounded due to Lemma~\ref{l.turning.hyperbolic}, and can be safely ignored.

As a result, we get
$$
\|\psi^*_t(v)\|\le e^{(\lambda_{s-1}+\varepsilon -\lambda_s+\varepsilon)t}\|v\|\le e^{-\varepsilon t}\|v\|,
$$
and conclude the proof of the lemma.
\end{proof}

On the unstable subspace  $E^u_x\subset N_x$, the situation is different: when the orbit of $x$ moves away from $\sigma$, it can only do so along $D^u_\alpha(\sigma)$. As a result, one loses the hyperbolicity along the $E^u_x$ direction. On the other hand, when the orbit approaches $\sigma$, the orbit segment will be `good' (in the sense that it is quasi-hyperbolic according to Definition~\ref{d11}) as long as the flow direction is tangent to the $E^c$ cone. This is summarized in the next lemma:
\begin{lemma}\label{l.u}
Let $\sigma$ be a Lorenz-like singularity, then there exists $\lambda\in(0,1),T_0>0$, $\alpha_1>0$ and $r>0$, such that if $\alpha<\alpha_1$ and $x$ is a periodic orbit such that the orbit segment $\phi_{[0,T]}(x)$ is contained in $B_r(\sigma)\cap \Cl(D^{cs}_\alpha(\sigma))$, then the orbit segment 
is $(\lambda,T_0)$-backward contracting. If the orbit segment is in $\Cl(D^u_\alpha(\sigma))$, then it does not have any sub-segment that is backward contracting.
\end{lemma}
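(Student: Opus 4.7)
\emph{Plan.} I will compute the scaled linear Poincar\'e flow $\psi^*_t$ directly on the unstable bundle $E^u_x \subset N_x$ by using the linearization~\eqref{e.c1flow} of $X$ near $\sigma$, together with the identification $E^u_x = \pi_x(E^{cu}_p)$ (where $E^{cu}_p$ is the $\Phi_t$-invariant bundle in $T_pM$ and $\pi_x$ is the orthogonal projection to $N_x$). The competition between the $\Phi_t$-expansion rate on $E^u_\sigma$ and the growth of the flow speed $\|X\|$ will determine which case we are in.

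\emph{Setup.} Fix $\varepsilon>0$ with $\mu := \lambda_{s+1}-\lambda_s-3\varepsilon>0$ and shrink $r$ so that, inside $U=B_r(\sigma)$, the tangent flow expands $E^u_\sigma$ at rate $\ge e^{(\lambda_{s+1}-\varepsilon)t}$ and contracts $E^{cs}_\sigma$ at rate $\le e^{(\lambda_s+\varepsilon)t}$. For periodic points sufficiently close to $\sigma$, the discussion from Section~\ref{s.starflow} that was recalled just before Remark~\ref{r.transverse} guarantees that $E^{cu}_p$ is $C^0$-close to $E^{cu}_\sigma = E^c_\sigma\oplus E^u_\sigma$, and consequently $E^u_x$ is $C^0$-close to $\pi_x(E^c_\sigma\oplus E^u_\sigma)$.

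\emph{Part 1: orbit in $\Cl(D^{cs}_\alpha)$.} By Lemma~\ref{l.cones}(1), $X\in C_{L\alpha}(E^{cs})$ along the whole segment. Two things follow: (i) the flow speed contracts, $\|X(\phi_t(x))\|\le e^{(\lambda_s+\varepsilon)t}\|X(x)\|$; (ii) the angle between $X$ and $E^u_\sigma$ is within $\mathcal{O}(\alpha)$ of $\pi/2$, so $E^u_x$ is close to $E^u_\sigma$ and the projection $\pi_{\phi_t(x)}\circ\Phi_t$ is an $\mathcal{O}(\alpha)$-near isometry on it. Combined with the expansion of $\Phi_t$ on $E^u_\sigma$ this yields, for every unit $v\in E^u_x$ and every $t\ge 0$,
\[
\|\psi^*_t(v)\| \;\ge\; e^{(\lambda_{s+1}-\lambda_s-3\varepsilon)t}\|v\| \;=\; e^{\mu t}\|v\|.
\]
Choose $T_0$ so that $\lambda := e^{-\mu T_0}<1$, partition $[0,T]$ by $0=t_0<\cdots<t_n=T$ with $t_{i+1}-t_i\in[T_0,2T_0]$, and pass to $-X$: the products appearing in~\eqref{e.hyptime} are immediately bounded by $\lambda^k$, giving $(\lambda,T_0)$-backward contraction for $E^u_x$.

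\emph{Part 2: sub-segment in $\Cl(D^u_\alpha)$, and the main obstacle.} Now $X\in C_{L\alpha}(E^u_\sigma)$, so the flow speed \emph{expands}: $\|X(\phi_t(x))\|\ge e^{(\lambda_{s+1}-\varepsilon)t}\|X(x)\|$. The crucial and most delicate step is to show that $E^u_x$, being the projection of the 2-plane $E^c_\sigma\oplus E^u_\sigma$ along a flow direction nearly parallel to $E^u_\sigma$, is actually $\mathcal{O}(\alpha)$-close to $\pi_x(E^c_\sigma)$; a coordinate calculation in the linearized picture makes this precise (projection along $X \approx E^u_\sigma$ collapses the $E^u$-summand and leaves $E^c_\sigma$ essentially invariant). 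Since $\Phi_t$ \emph{contracts} $E^c_\sigma$ at rate $\le e^{(\lambda_s+\varepsilon)t}$, we conclude that for every unit $v\in E^u_x$,
\[
\|\psi^*_t(v)\| \;\le\; e^{(\lambda_s - \lambda_{s+1} + 3\varepsilon)t}\|v\| \;=\; e^{-\mu t}\|v\|.
\]
Hence $\psi^*$ strictly contracts on $E^u_x$ inside $D^u_\alpha$; equivalently $\psi^*_{-t}$ strictly expands, so the forward-contraction inequality~\eqref{e.hyptime} for $-X$ fails on every sub-segment, which is the content of ``no sub-segment is backward contracting.'' The hardest ingredient is precisely this identification of $E^u_x$ with $\pi_x(E^c_\sigma)$ in $D^u_\alpha$, since it requires the bundle-extension results from~\cite{GSW} together with angle estimates between $X, E^c_\sigma$ and $E^u_\sigma$ that hold uniformly for \emph{every} periodic orbit in the neighborhood, not just those already inside $C(\sigma)$.
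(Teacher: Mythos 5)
Your Part 1 is correct and follows essentially the same route as the paper: inside $\Cl(D^{cs}_\alpha(\sigma))$ the tangent flow expands $E^u$ at rate at least $e^{(\lambda_{s+1}-\varepsilon)t}$ while the flow speed, being tangent to the $E^{cs}$-cone (Lemma~\ref{l.cones}), decays at rate at most $e^{(\lambda_s+\varepsilon)t}$, so $\psi^*_t$ expands $E^u_x$ and the backward segment is $(\lambda,T_0)$-contracting.

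Part 2, however, contains a genuine error in precisely the step you single out as the crucial one. You claim that in $\Cl(D^u_\alpha(\sigma))$ the bundle $E^u_x=\pi_x(E^{cu})$ is $\mathcal{O}(\alpha)$-close to $\pi_x(E^c_\sigma)$ because ``projection along $X\approx E^u_\sigma$ collapses the $E^u$-summand.'' This is only true when $\dim E^u_\sigma=1$ (your reference to ``the 2-plane $E^c_\sigma\oplus E^u_\sigma$'' betrays this hidden assumption); the paper works on a $d$-dimensional manifold where $\dim E^u_\sigma$ can be arbitrary. Projection along the one-dimensional flow direction removes only one direction of $E^u_\sigma$, so $E^u_x$ is close to $E^c_\sigma\oplus\bigl(E^u_\sigma\cap X(x)^{\perp}\bigr)$, not to $\pi_x(E^c_\sigma)$. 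Consequently your asserted bound $\|\psi^*_t(v)\|\le e^{-\mu t}\|v\|$ for \emph{every} unit $v\in E^u_x$ is false in general: for $v$ lying (approximately) in $E^u_\sigma\cap X(x)^{\perp}$, the numerator $\|\psi_t(v)\|$ grows at a rate comparable to, or faster than, the flow speed, so $\psi^*_t$ is at best neutral and can even expand (take $X$ along the $\lambda_{s+1}$-eigendirection and $v$ along a stronger-expanding eigendirection of $E^u_\sigma$). The conclusion you need survives because backward contraction in~\eqref{e.hyptime} is an \emph{operator-norm} condition: to rule it out on every sub-segment it suffices to exhibit a single vector of $E^u_y$ that is expanded by $\psi^*_{-t}$, and such a vector exists, namely (approximately) $\pi_y(E^c_\sigma)\subset E^u_y$, which is backward-expanded at rate $e^{(\lambda_{s+1}-\lambda_s-2\varepsilon)t}$ since $E^c$ backward-expands at rate $e^{-(\lambda_s+\varepsilon)t}$ while the flow speed backward-decays at rate $e^{-(\lambda_{s+1}-\varepsilon)t}$. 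This is exactly what the paper does (``we can take $v\in E^u_y$ almost parallel to $E^c(\sigma)$''); replace your claim about all of $E^u_x$ by this one-vector statement and Part 2 is repaired without further changes.
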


\begin{proof}
\noindent {\em Case  1.  $\phi_{[0,T]}(x)\subset  \Cl(D^{cs}_\alpha(\sigma))$.}

To simplify notation we write $y=\phi_T(x)$ for the endpoint of the orbit segment (note that it is the starting point for the same orbit segment under $-X$). Take any $t\in [0,T]$, we will estimate  
$$
\psi^*_{-t}(v)=\frac{\|X(y)\|}{\|X(\phi_{-t}(y))\|}\psi_{-t}(v),
$$
for $v\in E^u_{y}$. 

Recall that $\lambda_{s+1}$ is the smallest positive exponent of $\sigma$
. 
Like in the previous lemma, we take $\varepsilon>0$ small such that $\lambda_s+\varepsilon<0$, $\lambda_{s+1}-\varepsilon>0$, therefore $-\lambda_{s+1}+\lambda_s+2\varepsilon<0$.

Then we can take $r>0$ and $\alpha_1>0$ small enough such that all the lemmas in Section~\ref{s.3.1} hold; furthermore,
$$
\|\psi_{-t}(v)\|\le e^{-(\lambda_{s+1}-\varepsilon)t}\|v\|,
$$
and 
$$
\|X(\phi_{-t}(y))\|\ge e^{-(\lambda_s+\varepsilon)t}\|X(y)\| ,
$$
since the orbit segment is in $\Cl(D^{cs}_\alpha(\sigma))$.
This gives
$$
\|\psi^*_{-t}(v)\|\le  e^{(-\lambda_{s+1}+\varepsilon+\lambda_s+\varepsilon)t}\|v\| = e^{(-\lambda_{s+1}+\lambda_s+2\varepsilon)t}\|v\|,
$$
which shows that the orbit segment is backward contracting.

\noindent {\em Case 2. $\phi_{[0,T]}(x)\subset \Cl(D^u_\alpha(\sigma))$.}

We take any orbit segment $\phi_{[0,T]}(x)$ in $\Cl(D^u_\alpha(\sigma))$ and take any $y\in \phi_{[0,T]}(x)$. By Lemma~\ref{l.cones} the flow direction is almost parallel to $E^u$ if we take $\alpha_1$ small enough. As a result, we can take  $v\in E^u_y$ such that $v$ is almost parallel to $E^c(\sigma)$. For such $v$ we have
$$
\|\psi_{-t}(v)\|\ge e^{-(\lambda_{s}+\varepsilon)t}\|v\|,
$$
and the flow direction satisfies
$$
\|X(\phi_{-t}(y))\|\le e^{(-\lambda_{s+1}+\varepsilon)t}\|X(y)\|.
$$
This shows that 
$$
\|\psi^*_{-t}(v)\|\ge e^{(-\lambda_s-\varepsilon+\lambda_{s+1}-\varepsilon)t}\|v\|=e^{(-\lambda_s+\lambda_{s+1}-2\varepsilon)t}\|v\|.
$$
For $\varepsilon$ small enough, $-\lambda_s+\lambda_{s+1}-2\varepsilon>0$. As a result, $\psi^*_{-t}$ will never be contracting as long as the orbit segment is contained in $\Cl(D^u_\alpha(\sigma))$. Therefore $\phi_{[0,T]}(x)$ does not contain any sub-segment that is backward contracting.
\end{proof}
\begin{remark}
The previous two lemmas have similar formulations for reverse Lorenz-like singularities. 
\end{remark}

Recall the definition of $t^\pm(x)$ from the previous section.
Next, we introduce the main lemmas in this section, which will enable us to solve the issue mentioned in Remark~\ref{r.transverse} and create transverse intersection between the invariant manifolds of $p$ with points in $C$.

\begin{lemma}\label{l.main}
Let $\sigma$ be a Lorenz-like singularity. Then for $r>0$ small enough, for every  $\lambda\in(0,1), T_0>0$ there exists $\alpha_2>0$ such that for all $\alpha<\alpha_2$, if $y\in B_r(\sigma)\cap\Cl(D_{\alpha}^{cs}(\sigma))$ is a $(\lambda, T_0)$-backward hyperbolic time on its orbit, then $W^u(y)\pitchfork W^{cs}(\sigma)\ne\emptyset$.
\end{lemma}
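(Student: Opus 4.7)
The strategy is to combine Liao's existence theorem for unstable manifolds at backward hyperbolic times with the geometric transversality criterion of Lemma~\ref{l.intersection}. Since $y$ is a $(\lambda, T_0)$-backward hyperbolic time for the scaled linear Poincar\'e flow $\psi^*_t$, Liao's result recorded just after Lemma~\ref{l.glue} produces a constant $\delta_1 = \delta_1(\lambda, T_0) > 0$, independent of $y$, $r$ and $\alpha$, together with an unstable disk $W^u(y)$ of dimension $\dim E^u_\sigma$ and of diameter at least $\delta_1 \|X(y)\|$. This will be the input for Lemma~\ref{l.intersection}, whose other hypothesis is a cone tangency condition that must then be verified.

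Next, I would verify that, provided $r$ is small enough, $T_z W^u(y) \subset C_\beta(E^u_\sigma)$ at every $z \in W^u(y)$, for a cone width $\beta$ that can be fixed \emph{before} $\alpha_2$. Liao's unstable manifold is built by graph transform in the normal bundle, so its tangent bundle is controlled by the unstable subbundle $F^N\subset N$ of $\psi^*_t$ along $\Orb(y)$. By Section~\ref{s.starflow}, $F^N$ is the $\pi$-projection of the dominated subbundle $E^{cu}$, which extends continuously at $\sigma$ to the hyperbolic subspace $\langle X\rangle \oplus E^u_\sigma$. The key point is that $y \in \Cl(D^{cs}_\alpha(\sigma))$ sits away from the $E^u$-cone, so $X(y)$ lies close to $E^c_\sigma \subset E^{cs}_\sigma$ (in particular, almost perpendicular to $E^u_\sigma$), whence projecting $E^{cu}$ along $X(y)$ onto $N_y$ returns a direction within $C_\beta(E^u_\sigma)$ for $r$ small. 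This is where the extended linear Poincar\'e flow of Section~\ref{s.extended} is invoked, through the continuous extension $\zeta(\phi_t(y)) \to \cL^c$ as $y \to \sigma$ inside $D^{cs}_\alpha$ recorded earlier in this section.

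With $\delta_1$ from Step~1 and $\beta$ from Step~2 in hand, the proof of Lemma~\ref{l.intersection} transfers verbatim from the general hyperbolic splitting $E^s\oplus E^u$ to the Lorenz-like splitting $E^{cs}\oplus E^u$ (reading $W^s(\sigma)$ as $W^{cs}(\sigma)$ and $D^s_\alpha$ as $D^{cs}_\alpha$), because that proof uses only the pair of cones $D^{cs}_\alpha(\sigma)\subset M$ and $C_\beta(E^u)\subset TM$ together with the transversality of $E^u$ and $E^{cs}$. This yields a threshold $\alpha_0(\beta, \delta_1) > 0$; setting $\alpha_2 = \alpha_0(\beta, \delta_1)$ then forces $W^u(y) \pitchfork W^{cs}(\sigma) \ne \emptyset$ for every $\alpha < \alpha_2$. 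The delicate step is unquestionably the tangency estimate in Step~2: one must arrange that $\beta$ is chosen in advance of $\alpha_2$ and does not degrade with $\alpha$, since otherwise the two small parameters would be coupled and the application of Lemma~\ref{l.intersection} would break. The Lorenz-like hypothesis $\lambda_{s-1} < \lambda_s$ together with the continuous extension of $E^{cu}$ to $\sigma$ is precisely what decouples $\beta$ from $\alpha$.
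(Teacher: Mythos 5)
Your proposal is correct and follows essentially the same route as the paper: Liao's theorem at the backward hyperbolic time gives $W^u(y)$ of size $\delta\|X(y)\|$ tangent to an $E^u$-cone (the paper likewise notes that the normal unstable subspace $E^{uu}_y$ stays almost parallel to $E^u_\sigma$ while the orbit remains in $\Cl(D^{cs}_\alpha(\sigma))$), and then Lemma~\ref{l.intersection}, read with $E^{cs}$ in place of $E^s$ as set up at the start of Section~\ref{s.3.2}, yields the transverse intersection with $W^{cs}(\sigma)$ for $\alpha<\alpha_2$. Your Step~2 merely makes explicit the tangency/decoupling point that the paper treats in a parenthetical, so there is no substantive difference.
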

\begin{proof}
Since $y$ is a $(\lambda, T_0)$-backward hyperbolic time, according to the classic work of Liao~\cite{Liao}, there is $\delta>0$ such that $W^u(y)$ has  size $\delta \|X(y)\|$, tangent to $C_\beta(E^u)$ for some $\beta>0$ (in fact, tangent to $C_\beta(E^{uu}_{y})$ where $E^{uu}_{y}$ is the unstable subspace in $N_{y}$; however, we may assume that $E^{uu}_{y}$ and $E^u$ are almost parallel as long as the flow orbit remains in the cone $\Cl(D^{cs}_\alpha(\sigma))$). Let $\alpha_2$ be given by Lemma~\ref{l.intersection} for such $\beta$ and $\delta$, we see that  $W^u(y)\pitchfork W^{cs}(\sigma)\ne\emptyset$.

\end{proof}

\begin{lemma}[Backward hyperbolic times near $\sigma$]\label{l.cc}
Let $\sigma$ be a Lorenz-like singularity, and $\{p_n\}$ be a sequence of periodic points with $p_n\to \sigma$.  For $\lambda\in(0,1), T_0>0$ and for $\alpha\in(0,\min\{\alpha_1,\alpha_2\}) $, 
assume that the set 
$$
H_n=\{t\in(-t^-_n,t^+_n):\phi_t(p_n) \mbox{ is a }(\lambda,T_0)\mbox{-backward hyperbolic time.}\}
$$
has positive density: there exists $a>0$ such that for every $n$,
$$
\nu_n(\{\phi_t(p_n):t\in H_n\})>a>0,
$$
where $t^\pm_n$ and $\nu_n$ are taken according to~\eqref{e.empirical}. Then there exists $N>0$ such that 
$$
W^u(\Orb(p_n))\pitchfork W^{cs}(\sigma)\ne\emptyset, \mbox{ for all } n > N.
$$
\end{lemma}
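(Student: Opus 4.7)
The plan is to isolate, within the single excursion $\phi_{(-t^-_n, t^+_n)}(p_n)$ through $U = B_r(\sigma)$, a backward hyperbolic time that lies in the centre-stable cone $D^{cs}_\alpha(\sigma)$, and then apply Lemma~\ref{l.main} at that point to produce the transverse intersection. To this end I would partition $H_n = H_n^{cs} \sqcup H_n^u \sqcup H_n^{\mathrm{tr}}$ according to whether $\phi_t(p_n)$ lies in $D^{cs}_\alpha(\sigma)$, in $D^u_\alpha(\sigma)$, or in the transition region $U \setminus (D^{cs}_\alpha \cup D^u_\alpha)$, and then use Lemma~\ref{l.turning.hyperbolic} together with Case~2 of Lemma~\ref{l.u} to bound $|H_n^{\mathrm{tr}}|$ and $|H_n^u|$ by constants independent of~$n$.

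The bound $|H_n^{\mathrm{tr}}| \le T_\alpha$ is immediate from Lemma~\ref{l.turning.hyperbolic}. The main technical step, and the step I expect to be the main obstacle, is the bound on $|H_n^u|$. Here is how I would argue: fix $t \in H_n^u$ and set $y = \phi_t(p_n) \in \Cl(D^u_\alpha)$. By Definition~\ref{d.hyperbolictime}, the first block of the partition witnessing backward hyperbolicity at $y$ furnishes some $t_1 \in [T_0, 2T_0]$ with $\|\psi^*_{-t_1}|_{E^u_y}\| \le \lambda < 1$. On the other hand, Case~2 of Lemma~\ref{l.u} asserts that whenever the backward orbit segment $\phi_{[-s, 0]}(y)$ lies entirely inside $\Cl(D^u_\alpha)$ there is a vector $v \in E^u_y$ (essentially aligned with $E^c(\sigma)$) such that $\|\psi^*_{-s}(v)\| \ge \|v\|$, so that the operator norm $\|\psi^*_{-s}|_{E^u_y}\|$ cannot contract. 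Combining these two statements forces the backward orbit of $y$ to exit $D^u_\alpha(\sigma)$ within flow time at most $2T_0$. Since within a single excursion through $U$ the orbit enters $D^u_\alpha$ exactly once---it approaches $\sigma$ through $D^{cs}_\alpha$ and escapes through $D^u_\alpha$, by the dynamics of a hyperbolic singularity---this confines $H_n^u$ to an interval of Lebesgue length at most $2T_0$ right after the entry time into $D^u_\alpha$, yielding $|H_n^u| \le 2T_0$.

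Combining these estimates with the hypothesis $|H_n| = \nu_n(\phi_{H_n}(p_n)) \cdot (t^-_n + t^+_n) \ge a\,(t^-_n + t^+_n)$ and the fact that $p_n \to \sigma$ forces $t^\pm_n \to \infty$, I obtain
\[
|H_n^{cs}| \;\ge\; a\,(t^-_n + t^+_n) - T_\alpha - 2T_0 \;\xrightarrow{n \to \infty}\; \infty.
\]
Thus for all sufficiently large $n$ I can select some $t_n \in H_n^{cs}$, and the point $y_n = \phi_{t_n}(p_n) \in B_r(\sigma) \cap \Cl(D^{cs}_\alpha(\sigma))$ is a $(\lambda, T_0)$-backward hyperbolic time on its own orbit. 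Applying Lemma~\ref{l.main} at $y_n$ (which uses the assumption $\alpha < \alpha_2$) yields $W^u(y_n) \pitchfork W^{cs}(\sigma) \ne \emptyset$, and since $y_n \in \Orb(p_n)$ this gives the desired transverse intersection $W^u(\Orb(p_n)) \pitchfork W^{cs}(\sigma) \ne \emptyset$. Everything outside the $|H_n^u|$ bound is a direct counting argument; the real work is pairing the single-step contraction demanded by the hyperbolic time with the non-contraction produced by Lemma~\ref{l.u}.
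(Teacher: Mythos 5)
Your proposal is correct and follows essentially the same route as the paper: decompose the excursion through $B_r(\sigma)$ into the $D^{cs}_\alpha$ part, the turning region, and the $D^u_\alpha$ part, bound the contribution of the latter two via Lemma~\ref{l.turning.hyperbolic} and Case~2 of Lemma~\ref{l.u}, use the positive density hypothesis to locate a backward hyperbolic time inside $\Cl(D^{cs}_\alpha(\sigma))$, and finish with Lemma~\ref{l.main}. Your only deviation is a refinement: where the paper asserts that no backward hyperbolic time occurs in the $D^u_\alpha$ part, you more carefully allow an initial window of length at most $2T_0$ after entry into $D^u_\alpha$ (since the first partition block may protrude backwards out of the cone), which does not change the density argument.
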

\begin{proof}
\begin{figure}[h]
    \centering
    \includegraphics[scale=0.7]{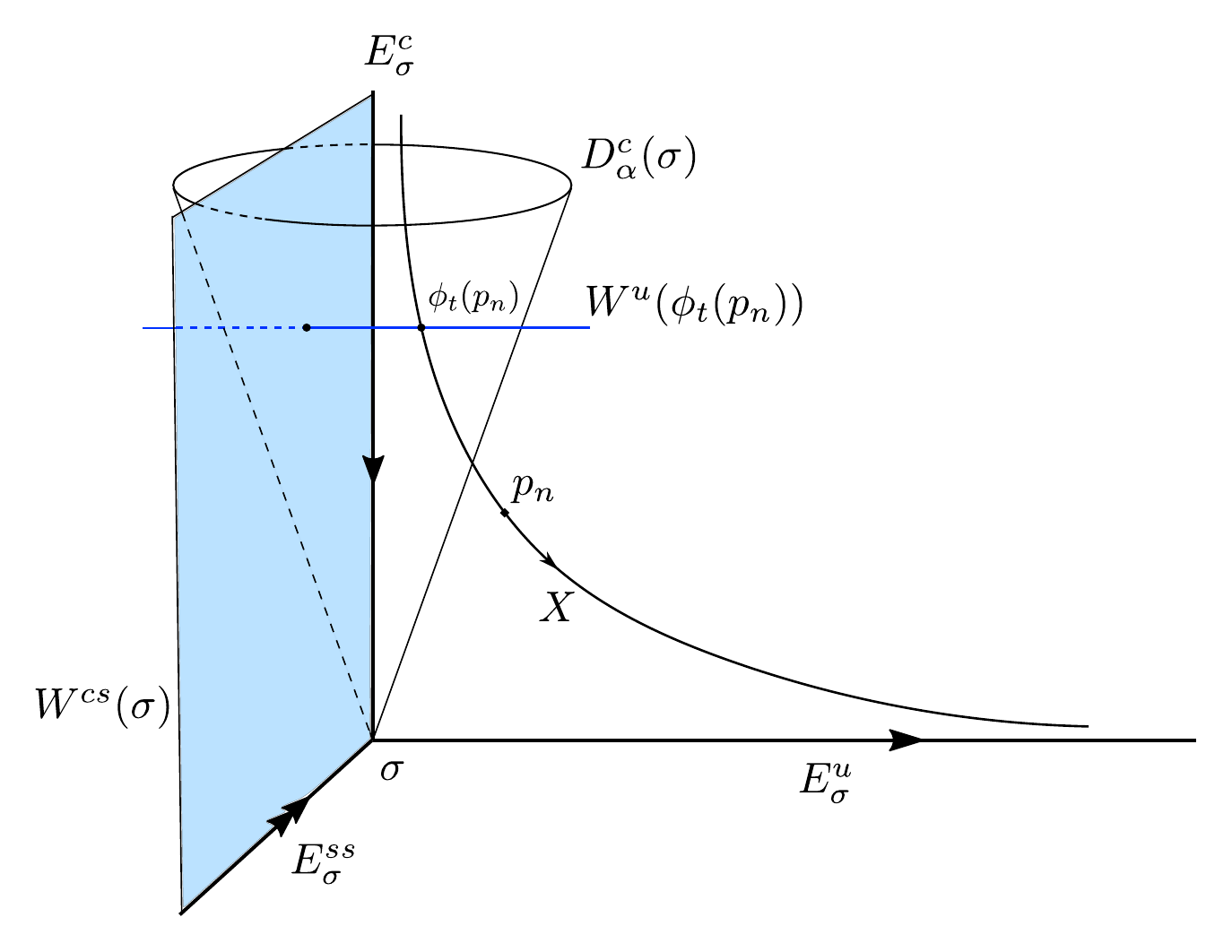}
    \caption{Transverse intersection between $W^u(\Orb(p_n))$ and $W^{cs}(\sigma)$}
    \label{f.2}
\end{figure}
Let $\alpha<\min\{\alpha_1,\alpha_2\}$ where $\alpha_1$ is given by Lemma~\ref{l.u}. 
We claim  that $\{\phi_t(x): t\in H_n\}\cap D^{cs}_{\alpha}(\sigma)\ne\emptyset$. 

To this end, we parse the orbit segment $\phi_{(-t^-_i,t^+_i)}(p_n)$ into three consecutive parts like in the proof of Lemma~\ref{l.timeincone}:
$$
(-t^-_n,t^+_n) = (-t^-_n,-t^s_n)\cup(-t^s_n,t^u_n)\cup(t^u_n,t^+_n),
$$
such that:
\begin{itemize}
\item $-t^s_n$ is the first time in $(-t^-_n,t^+_n)$ such that $\phi_{-t^s_n}(p_n)\notin D^{cs}_{\alpha}(\sigma)$;
\item $t^u_n$ is the first time in $(-t^-_n,t^+_n)$ such that $\phi_{t^u_n}(p_n)\in D^u_{\alpha}(\sigma)$.
\end{itemize}
In other words, the orbit segment in $(t^s_n,t^u_n)$ is `making the turn'. Then according to Lemma~\ref{l.turning.hyperbolic}, $t^u_n+t^s_n$ is uniformly bounded by $T_{\alpha}$. Therefore, we can take $n$ large enough such that 
$$
\nu_n(\{\phi_t(p_n): t\in (t^s_n,t^u_n)\})<\frac{a}{2}.
$$

On the other hand, Lemma~\ref{l.u} states that for every $t\in (t^u_n,t^+_n),$ $\phi_t(p_n)$ cannot be a backward hyperbolic time, since any sub-segment contained in $\phi_{(t^u_n,t)}(p_n)$ cannot be backward contracting. As a result, we have $H_n\cap (t^u_n,t^+_n)=\emptyset$. It then follows that 
$$
\nu_n(\{\phi_t(p_n):t\in H_n\cap(-t^-_n,-t^s_n)\})>\frac a2.
$$
In other words, there is a backward hyperbolic time $y_n = \phi_t(p_n)$ contained in $D^{cs}_{\alpha}(\sigma)$.  

It follows from Lemma~\ref{l.main} that $W^u(y_n)\pitchfork W^{cs}(\sigma)\ne\emptyset$. See Figure~\ref{f.2}. We conclude the proof of the lemma.
\end{proof}

\begin{lemma}[Forward hyperbolic times near $\sigma$]\label{l.cc2}
	Let $\sigma$ be a Lorenz-like singularity. 
	For $\beta>0$ small enough, $\lambda\in(0,1), T_0>0$, there exists $\alpha_3>0$ with the following property: \\
	For every $\alpha<\alpha_3$, let $D$ be a $(\dim E^u+1)$-dimensional disk that contains $\sigma$ and is tangent to $C_\beta(E^{cu})$, and let $z$ be a $(\lambda, T_0)$-forward hyperbolic time that is contained in $D_\alpha^{c}(\sigma)\cap B_r(\sigma)$ for some $r>0$ small enough. Then we have 
	$$
	W^s(z)\pitchfork D\ne\emptyset, \mbox{ for all $n$ large enough}.
	$$
\end{lemma}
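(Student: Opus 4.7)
The plan is to adapt the geometric argument of Lemma~\ref{l.intersection} with the roles of stable and unstable directions exchanged. The core observation is that a forward hyperbolic time $z$ admits a stable manifold whose diameter is proportional to the flow speed $\|X(z)\|$, while the hypothesis $z \in D^c_\alpha(\sigma)$ forces $z$ to lie uniformly closer to the $E^{cu}$ subspace than to $E^{ss}$ whenever $\alpha$ is small.

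First I would invoke the classical result of Liao~\cite{Liao}: since $z$ is a $(\lambda, T_0)$-forward hyperbolic time for $\psi_t^*$, there is $\delta = \delta(\lambda, T_0) > 0$ such that $W^s(z)$ has diameter at least $\delta\|X(z)\|$ and is tangent to a narrow cone around the stable subspace $E^{ss}_z \subset N_z$ for $\psi_t^*$. For $z \in D^c_\alpha(\sigma)$ the flow direction at $z$ lies in a narrow $E^c$-cone, so $N_z$ is almost parallel to $E^{ss}\oplus E^u$; combining this with the alignment of hyperbolic splittings for periodic orbits near $\sigma$ recalled in Section~\ref{s.starflow} and used in the proof of Lemma~\ref{l.sgood}, the subspace $E^{ss}_z$ is close to $E^{ss}_\sigma$. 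Hence $W^s(z)$ is tangent to a cone $C_{\beta'}(E^{ss})$ with $\beta'$ small once $\alpha$ and $r$ are small.

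Next I would compare $\diam W^s(z)$ with $d(z, D)$. In the hyperbolic coordinates $(z^{ss}, z^c, z^u)$ near $\sigma$, the hypothesis $z \in D^c_\alpha(\sigma)$ gives $z^{ss} \le \alpha z^c$, while the cone condition on $D$, together with $\sigma \in D$, yields $d(z, D) \le C(\beta)\, z^{ss}$. On the other hand, as in the proof of Lemma~\ref{l.timeincone}, $\|X(z)\|$ is bi-Lipschitz equivalent to $d(z, \sigma)$, so $\|X(z)\| \ge C_1 z^c$. Chaining these estimates gives
$$
\diam W^s(z) \;\ge\; \delta \|X(z)\| \;\ge\; \delta C_1 z^c \;\ge\; \frac{\delta C_1}{\alpha}\, z^{ss} \;\ge\; \frac{\delta C_1}{\alpha\, C(\beta)}\, d(z, D),
$$
so choosing $\alpha_3 = \alpha_3(\delta, \beta)$ small enough forces $W^s(z)$ to reach $D$. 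Transversality at the intersection follows from the cone structure: $W^s(z)$ has dimension $s-1$ and is tangent to $C_{\beta'}(E^{ss})$, while $D$ has dimension $\dim E^u+1$ and is tangent to $C_\beta(E^{cu})$; since $E^{ss}\oplus E^{cu} = T_\sigma M$ and the dimensions sum to $\dim M$, the tangent spaces span $T M$ at every intersection point provided $\beta, \beta'$ are small enough.

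The main delicate point will be the bookkeeping of cone widths, in particular ensuring that $\beta'$ (controlling the direction of $W^s(z)$ via Liao's theorem) remains transverse to $C_\beta(E^{cu})$, and that $E^{ss}_z$ truly aligns with $E^{ss}_\sigma$ as $z \to \sigma$ within $D^c_\alpha(\sigma)$. Once the order of quantifiers is fixed correctly — first $\beta$ and $(\lambda, T_0)$ (which determine $\delta$ and an admissible $\beta'$), then $\alpha_3$, and finally $r$ — the argument reduces to the same geometric principle as Lemma~\ref{l.intersection} and presents no essential obstacle beyond this bookkeeping.
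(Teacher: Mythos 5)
Your argument is essentially the paper's proof: Liao's theorem gives $W^s(z)$ of size $\delta\|X(z)\|$ tangent to a thin $E^{ss}$-cone, the flow speed is comparable to $d(z,\sigma)\sim z^c$ inside $D^c_\alpha(\sigma)$, and the intersection follows from the complementary dimensions and the cone tangencies, with $\alpha_3$ chosen in terms of $\delta$ exactly as in Lemma~\ref{l.intersection}. One caveat: the intermediate bound $d(z,D)\le C(\beta)\,z^{ss}$ is not correct as stated, because $D$ only contains $\sigma$ and may be tilted by up to $\beta$, so $d(z,D)$ carries an additional term of order $\beta z^{c}$ (for instance, $z$ on the $E^c$-axis has $z^{ss}=0$ yet need not lie on $D$); the correct requirement is that $(\alpha+\beta)z^c$ be dominated by $\delta\|X(z)\|$, and this extra term is absorbed precisely by the hypothesis that $\beta$ is small (relative to $\delta=\delta(\lambda,T_0)$), which is how the lemma's ``for $\beta>0$ small enough'' is used both here and, implicitly, in the paper's own proof.
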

\begin{proof}
	Similar to Lemma~\ref{l.main}, there is $\delta>0$ such that $W^s(z)$ has size $\delta \|X(z)\|$, tangent to $C_{\beta'}(E^{ss})$ for some $\beta'>0$ small enough. 

	Following the idea of Lemma~\ref{l.intersection}, we have 
	$$
	\diam W^s(z) > C_1\delta d(z,\sigma) > CC'\delta x^c.
	$$
	On the other hand, since $D$ is tangent to  $C_\beta(E^{cu})$ and satisfies
	$$
	\dim D + \dim W^s(z) = \dim E^u+ 1 + \dim E^{ss} = \dim M,
	$$
	there exists $C'''>0$ such that whenever $\diam W^s(z) > C'' x^{ss}$ we must have $W^s(z)\pitchfork D\ne\emptyset$. See Figure~\ref{pic.cc2}. Since $x^{ss} <\alpha x^c$ inside the center cone $D_\alpha^{c}(\sigma)$, the choice of $\alpha_3 = C_1C'\delta / C'''$ satisfies the requirement of the lemma. 
\begin{figure}[h]
\centering
\includegraphics[scale=0.6]{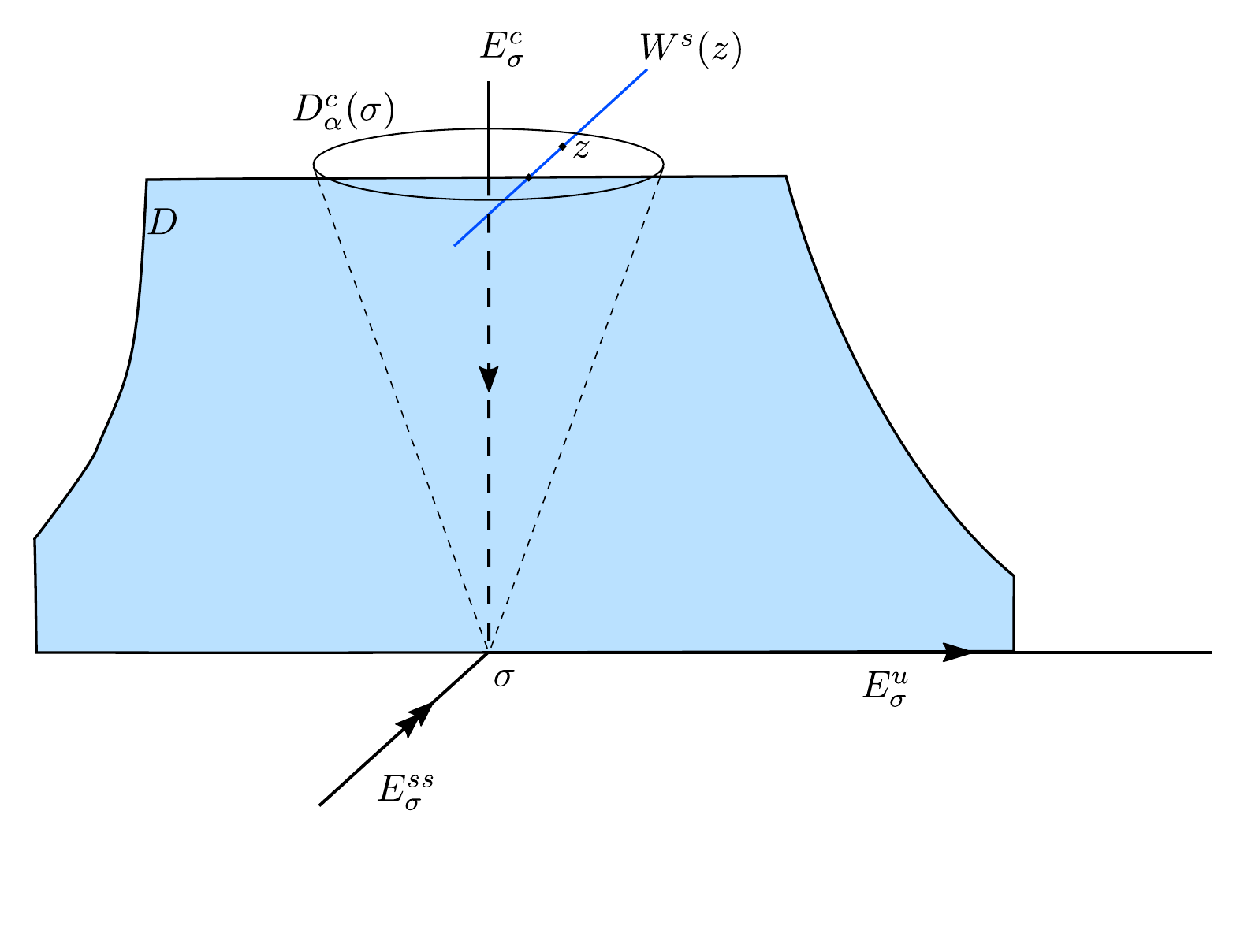}
\caption{Transverse intersection between $W^s(z)$ and $D$}
\label{pic.cc2}
\end{figure}
\end{proof}

Henceforth,  we assume that $r>0$ is small enough such that all the previous lemmas hold. Let $\lambda\in(0,1),T_0>0$ be given by Lemma~\ref{l.u}, and $\alpha<\min\{\alpha_1, \alpha_2,\alpha_3\}$.

\section{Proof of Theorem~\ref{m.star}}\label{ss.starproof}
This section contains the proof of Theorem~\ref{m.star}. The proof is three-fold: 
\begin{enumerate}
\item every chain recurrent class $C$ with positive entropy must contain a periodic orbit;  note that the converse is also true: for $C^1$ generic flows, a non-trivial chain recurrent class containing a periodic point $p$ must coincide with the homoclinic class of $p$ (\cite[Proposition 4.8]{PYY}; see also~\cite{BC}), therefore has positive topological entropy;
\item there exists a neighborhood $U$ of $C$, such that every periodic orbit in $U$ must be contained in $C$;
\item $C$ is isolated.
\end{enumerate}

Recall that $\cR$ is the residual set in $\mathscr{X}_*^1(M)$ described in Section~\ref{s.starflow}. Denote by $\cR_0\subset \mathscr{X}^1(M)$ the residual set of Kupka-Smale vector fields. The next lemma takes care of Step (3) above.

\begin{lemma}\label{l.isolated}
For a $C^1$ vector field $X\in \cR\cap \cR_0$, let $C$ be a chain recurrent class with the following property: there exists a neighborhood $U$ of $C$ such that every periodic orbit in $U$ is contained in $C$.\footnote{Recall that for $C^1$ generic diffeomorphisms, every non-trivial chain recurrent class is approximated by periodic orbits. See~\cite{C06}.} Then $C$ is isolated.
\end{lemma}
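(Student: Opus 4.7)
The plan is to argue by contradiction. Assuming $C$ is not isolated, we choose a decreasing sequence of closed neighborhoods $V_n\downarrow C$ with $V_n\subset U$ and a uniform outer buffer $d(V_n,\partial U)\geq\delta_0>0$, together with points $x_n\in V_n\setminus C$ whose full orbits $\phi_\RR(x_n)$ remain in $V_n$. My goal is to manufacture a periodic orbit whose \emph{entire} orbit lies in $U$ but which belongs to a chain recurrent class different from $C$---contradicting the standing hypothesis.

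First, a chain-concatenation argument shows $\omega(x_n)\not\subset C$ or $\alpha(x_n)\not\subset C$ for each $n$: if both were contained in $C$, the chain-recurrence of $C$ lets one link any point of $\omega(x_n)$ to any point of $\alpha(x_n)$ inside $C$, and coupling this with forward and backward orbit tails of $x_n$ would make $x_n$ chain-equivalent to $C$, forcing $x_n\in C$. Passing to a subsequence and replacing $X$ by $-X$ if necessary, we may assume $\omega(x_n)$ lies in a chain recurrent class $C_n^\omega\ne C$; since distinct chain recurrent classes are disjoint, $C_n^\omega\cap C=\emptyset$, while $\omega(x_n)\subset V_n\cap C_n^\omega$.

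Next, we verify that $\omega(x_n)$ is disjoint from $\Sing(X)$ for $n$ large: any singularity $\sigma\in\omega(x_n)$ would belong to $V_n\cap C_n^\omega$, but the finite list of hyperbolic singularities together with $V_n\downarrow C$ forces $\sigma\in C$, contradicting $\sigma\in C_n^\omega$. Pick any ergodic invariant probability $\mu_n$ on $\omega(x_n)\subset C_n^\omega$; since $\omega(x_n)$ contains no singularity and $X$ is a star flow, $\mu_n$ is hyperbolic in the sense of the linear Poincar\'e flow. Applying Lemma~\ref{l.C1pesin} to an $f$-ergodic component of $\mu_n$ produces a positive-measure compact set $\Lambda_{0,n}\subset\mathrm{supp}(\mu_n)\subset C_n^\omega$ on which orbits returning to $\Lambda_{0,n}$ are $(\eta,T_0)^*$-quasi-hyperbolic. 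Poincar\'e recurrence then furnishes $y\in\Lambda_{0,n}$ and integer times $n_k\to\infty$ with $\phi_{n_k}(y)\in\Lambda_{0,n}$ and $d(y,\phi_{n_k}(y))\to 0$.

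I now apply Liao's shadowing lemma (Lemma~\ref{l.shadowing}) with $\varepsilon<\delta_0/\sup_M\|X\|$. For $k$ large this yields a periodic orbit $P_k$ whose parametrization shadows $\phi_{[0,n_k]}(y)$ within $\varepsilon\|X(\phi_t(y))\|<\delta_0$; property (f) of the lemma (invoked with $\Lambda_{0,n}\subset C_n^\omega$) puts $P_k\subset C_n^\omega$. Since $\phi_{[0,n_k]}(y)\subset\omega(x_n)\subset V_n$ and the shadowing distance is strictly less than $d(V_n,\partial U)$, the \emph{whole} periodic orbit $P_k$ sits in $U$. The standing hypothesis then forces $P_k\subset C$, while $P_k\subset C_n^\omega$ and $C\cap C_n^\omega=\emptyset$ deliver the contradiction.

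The main obstacle is ensuring the entire shadowing periodic orbit---not merely one of its points---lies in $U$; this is overcome by the uniform outer buffer $\delta_0$ combined with the explicit $\varepsilon\|X(\phi_t(y))\|$ control in Liao's lemma. A secondary delicacy, ruling out singularities inside $\omega(x_n)$, is resolved via the finiteness of hyperbolic singularities together with the shrinking of $V_n$ to $C$.
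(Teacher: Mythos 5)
Your argument is correct but follows a noticeably different path from the paper's. The paper's proof is shorter and proceeds by contradiction on a sequence of distinct chain recurrent classes $C_n\subset U$ converging to $C$ in Hausdorff distance: after discarding singularities from $C_n$ (as you also do, by shrinking $U$), it invokes the Gan--Wen theorem that nonsingular chain recurrent classes of star flows are uniformly hyperbolic, so each $C_n$ contains a periodic orbit $\Orb(p_n)\subset U$, which by hypothesis forces $C_n=C$. You instead start from the maximal-invariant-set formulation of ``isolated,'' extract $\omega$- or $\alpha$-limit classes $C_n^\omega\ne C$, and re-derive the existence of a periodic orbit near $\omega(x_n)$ from scratch via Lemma~\ref{l.C1pesin} (Pesin theory) and Liao's shadowing lemma (Lemma~\ref{l.shadowing}), exploiting property~(f) to place the shadowing orbit in $C_n^\omega$ and the explicit $\varepsilon\|X\|$-control to keep it inside $U$. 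Two remarks. First, your opening reduction (the $\alpha/\omega$-limit chain-concatenation step, showing $x_n\notin C$ forces one limit set outside $C$) is actually a useful piece of bookkeeping that the paper leaves implicit; it also handles the case where $C_n^\omega$ is not wholly contained in $U$, which the paper's ``WLOG $C_n\subset U$'' sidesteps. Second, your invocation of Lemma~\ref{l.C1pesin} implicitly uses that the linear Poincar\'e flow over $\supp\mu_n$ has a dominated splitting and that $\mu_n$ is hyperbolic---both are true for star flows (this is~\cite[Theorem 5.6]{GSW}), but you should cite it rather than assert it. Overall, your route is more self-contained and elementary in flavor but longer; the paper's buys brevity at the cost of leaning on the Gan--Wen hyperbolicity theorem.
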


\begin{proof}
The proof is standard. Let $C_n$ be a sequence of distinct chain recurrent classes approaching $C$ in the Hausdorff topology. Without loss of generality we may assume that $C_n\subset U$, and $C_n\ne C$ for all $n$. Since $X$ has only finitely many singularities, we may assume that $U$ is small enough such that all the singularities in $U$ are indeed contained in $C$. It then follows that  $C_n\cap\Sing(X)=\emptyset$ for all $n$ large enough. 



Since $X$ is a star vector field, the main result of~\cite{GW} shows that $C_n$ is uniformly hyperbolic. In particular, there exists periodic orbit $\Orb(p_n)\subset C_n\subset U$. By assumption we must have $\Orb(p_n)\subset C$, so $C_n = C$ which is a contradiction. 

\end{proof}

Next, we turn our attention to Step (1) and (2).

\subsection{$C$ contains a periodic orbit}
Step (1) of the proof is carried out in the following two  propositions, which are of independent interest.

\begin{proposition}\label{p.4.2}
There exists a residual set $\tilde\cR\subset \mathscr{X}_*^1(M)$ such that for every $X\in \tilde{\cR}$, let $C$ be a chain recurrent class of $X$ with singularities of different indices, then $C$ contains a periodic point and has positive topological entropy.
\end{proposition}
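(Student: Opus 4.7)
The plan is to use the coexistence of a Lorenz-like and a reverse Lorenz-like singularity in $C$, together with Lemma~\ref{l.cc} applied to both $X$ and $-X$, to produce a single periodic orbit satisfying both sides of a heteroclinic loop — whose chain-order with the two singularities then pins it inside $C$. I would set $\tilde\cR=\cR\cap\cR_0\cap\cR_1$, with $\cR$ from Section~\ref{s.starflow}, $\cR_0$ the Kupka--Smale residual set, and $\cR_1$ the generic set of vector fields for which the Bonatti--Crovisier connecting lemma for pseudo-orbits holds and periodic orbits are dense in the chain recurrent set. For $X\in\tilde\cR$ and $C$ as in the statement, Section~\ref{s.starflow} gives $\sigma_1\in C$ Lorenz-like of index $s+1$ and $\sigma_2\in C$ reverse Lorenz-like of index $s$, where $s=\Ind_C$, and every periodic orbit in a small neighborhood of $C$ has index $s$.

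The guiding observation is a dimension count. At a regular intersection $z\in W^u(\sigma_1)\cap W^s(\sigma_2)$, both tangent spaces contain the flow direction $X(z)$, so transversality would force $\dim W^u(\sigma_1)+\dim W^s(\sigma_2)\ge n+1$, whereas this sum equals $n-1$. By Kupka--Smale, $W^u(\sigma_1)\cap W^s(\sigma_2)=\emptyset$; the reverse count $\dim W^u(\sigma_2)+\dim W^s(\sigma_1)=n+1$ together with the chain relation in $C$ and the connecting lemma yields a transverse heteroclinic $\Gamma\subset W^u(\sigma_2)\pitchfork W^s(\sigma_1)\ne\emptyset$.

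To extract a periodic orbit in $C$, I would use the generic connecting lemma to produce a sequence of periodic orbits $\Orb(p_n)$ whose trajectories visit both $B_r(\sigma_1)$ and $B_r(\sigma_2)$ and accumulate on $\sigma_1$ and $\sigma_2$ as $n\to\infty$. By the star condition, each $\Orb(p_n)$ is uniformly hyperbolic of index $s$; Lemmas~\ref{l.sgood}--\ref{l.u} then ensure that every point of $\Orb(p_n)$ inside $B_r(\sigma_1)\cap D^{cs}_\alpha(\sigma_1)$ (resp.\ $B_r(\sigma_2)\cap D^{cu}_\alpha(\sigma_2)$) is a uniform backward (resp.\ forward) hyperbolic time for the scaled linear Poincar\'e flow. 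Applying Lemma~\ref{l.cc} to $X$ at $\sigma_1$ and to $-X$ at $\sigma_2$ gives, for $n$ large,
\[
W^u(\Orb(p_n))\pitchfork W^{cs}(\sigma_1)\ne\emptyset \quand W^s(\Orb(p_n))\pitchfork W^{cu}(\sigma_2)\ne\emptyset.
\]
The first transverse intersection implies $\Orb(p_n)\ge\sigma_1$ in the chain-recurrent order of $X$; the second implies $\sigma_2\ge\Orb(p_n)$. Combined with $\sigma_1,\sigma_2\in C$ (so $\sigma_1\ge\sigma_2$), we obtain $\sigma_1\ge\sigma_2\ge\Orb(p_n)\ge\sigma_1$, forcing $\Orb(p_n)\in C$.

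For the positive entropy statement, once $\Orb(p_n)\subset C$ is produced, \cite[Proposition~4.8]{PYY} gives $C=H(\Orb(p_n))$, and the transverse heteroclinic cycle $\Orb(p_n)\to\sigma_1\leftarrow\Gamma\leftarrow\sigma_2\to\Orb(p_n)$ yields a transverse homoclinic intersection for $\Orb(p_n)$ via the $\lambda$-lemma; the Smale--Birkhoff theorem then delivers a horseshoe in $C$ and $h_{top}(X|_C)>0$. The main obstacle is the very first step of the previous paragraph: producing a single periodic orbit $\Orb(p_n)$ that simultaneously approximates both $\sigma_1$ and $\sigma_2$. This requires a careful application of the Bonatti--Crovisier connecting lemma to the pseudo-orbits inside $C$ that wind around both singularities, together with the multi-singular hyperbolic structure of~\cite{BD} to guarantee that the closed-up orbit is hyperbolic of the expected index $s$ rather than degenerating at the perturbation step.
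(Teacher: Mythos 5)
Your overall strategy coincides with the paper's: locate a Lorenz-like $\sigma^+$ and a reverse Lorenz-like $\sigma^-$ in $C$, manufacture (generically) periodic orbits passing near both, obtain $W^u(\Orb(p_n))\pitchfork W^{s}(\sigma^+)\ne\emptyset$ and $W^s(\Orb(p_n))\pitchfork W^{u}(\sigma^-)\ne\emptyset$, and conclude $\Orb(p_n)\subset C$, so that $C$ is the homoclinic class of $p_n$ and has positive entropy. However, there is a genuine gap at the central step. You assert that ``every point of $\Orb(p_n)$ inside $B_r(\sigma_1)\cap D^{cs}_\alpha(\sigma_1)$ is a uniform backward hyperbolic time'' as a consequence of Lemmas~\ref{l.sgood}--\ref{l.u}. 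This is false: by Definition~\ref{d.hyperbolictime}, being a $(\lambda,T_0)$-backward hyperbolic time is a property of the \emph{entire} backward semi-orbit, while Lemma~\ref{l.u} only gives backward contraction of the orbit segment while it stays in $\Cl(D^{cs}_\alpha(\sigma_1))$; the second half of that same lemma shows that any excursion through $D^u_\alpha$ (which the orbit of $p_n$ necessarily makes when it leaves a neighborhood of a singularity), as well as the transition segments outside $B_r(\sigma^\pm)$, can destroy backward contraction. Without knowing that some genuine backward hyperbolic time lands in $D^{cs}_\alpha(\sigma^+)$, neither Lemma~\ref{l.main} nor Lemma~\ref{l.cc} applies --- indeed Lemma~\ref{l.cc} explicitly requires the backward hyperbolic times to have uniform positive density along the orbit, a hypothesis your construction never verifies. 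This is precisely the issue flagged in Remark~\ref{r.transverse}: near the singularity the invariant manifolds of $p_n$ shrink with the flow speed, so membership in a cone gives no intersection for free.

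The paper closes this gap with two ingredients that are absent from your proposal. First, the periodic orbits are required to satisfy the quantitative property P$(n)$ --- roughly half of the period spent in $B_r(\sigma^+)$, half in $B_r(\sigma^-)$, and a proportion at most $1/n$ outside --- and this open property is made generic; merely asking that the orbits ``visit and accumulate on'' both singularities is too weak. Second, the existence of backward hyperbolic times comes from \cite[Lemma 2.1]{GSW} together with the Pliss lemma~\cite{Pliss} (positive density along $\Orb(p_n)$), and a case analysis then places one of them in $D^{cs}_\alpha(\sigma^+)$: if a Pliss time falls in $B_r(\sigma^+)$ it must lie in $D^{cs}_\alpha(\sigma^+)$ by Lemma~\ref{l.u} and the bounded turning time (Lemma~\ref{l.turning.hyperbolic}); if it falls in $B_r(\sigma^-)$, one uses Lemma~\ref{l.sgood} for $-X$ and the gluing Lemma~\ref{l.glue}, together with the smallness of the transition time guaranteed by P$(n)$, to propagate the hyperbolic time to the entry point of $D^{cs}_\alpha(\sigma^+)$. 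You would need to incorporate both the time-proportion control in the construction of $p_n$ and this Pliss-plus-gluing argument for your proof to go through; the remaining parts of your proposal (the dimension count, the chain-order argument, and the entropy conclusion via the homoclinic class) are consistent with the paper.
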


\begin{proof}
Let $X\in \cR\cap \cR_0$ as before. 
Following the discussion in Section~\ref{s.starflow}, let $\sigma^+\in C$ be a Lorenz-like singularity, and $\sigma^-\in C$ be reverse Lorenz-like. We have 
$$
\Ind(\sigma^+)-1 = \Ind(\sigma^-)=\Ind_C,
$$ 
where $\Ind_C$ is also the stable index of the periodic orbits sufficiently close to $C$. We will construct a periodic point $p$ such that Lemma~\ref{l.main} can be applied at $\sigma^+$ for $X$, and at $\sigma^-$ for $-X$. This shows that $p\in C$, which also implies that $C = H(p)$ where $H(p)$ is the homoclinic class of $p$, therefore has positive topological entropy, by~\cite{BC}.

Given a periodic orbit $\gamma$, we denote by $\Pi(\gamma)$ its primary period. 
By~\cite[Lemma 2.1]{GSW} which is originally due to Liao, there is $\tilde\lambda\in (0,1)$, $T>0$ such that for every periodic orbit $\gamma$ of $X$ with periodic $\Pi(\gamma)$ longer than $T$,
 we have 
$$
\prod_{i=0}^{[\Pi(\gamma)/T]-1} \|\psi_T|_{N^s(\phi_{iT}(x))}\|\le \tilde\lambda^{\Pi(\gamma)},
$$ 
and a similar estimate holds on $N^u$. Here $N^s\oplus N^u$ is the hyperbolic splitting on $N_x$ for the linear Poincar\'e flow $\psi_t$. By the Pliss Lemma~\cite{Pliss}, for every $\lambda\in (\tilde\lambda, 1)$ 
there are $(\lambda,T_0)$-backward hyperbolic times for $N^u$ along the orbit of $\gamma$. Moreover, such points have positive density along the orbit of $\gamma$. 

Fix such $\lambda$ and $r>0$ small enough. For each $n>0$, consider the following property:

\noindent(P(n)): there is a hyperbolic periodic orbit $p_n$, such that (see Figure~\ref{f.4}):
\begin{itemize}
\item the time that $\Orb(p_n)$ spends inside $B_r(\sigma^+)$ is more than $(1/2-1/n)\Pi(p_n)$;
\item the time that $\Orb(p_n)$ spends inside $B_r(\sigma^-)$ is more than $(1/2-1/n)\Pi(p_n)$;
\item the time that $\Orb(p_n)$ spends outside $B_r(\sigma^-)\cup B_r(\sigma^-)$ is less than  $\frac1n\Pi(p_n)$.
\end{itemize} 
Clearly this is an open property in $\mathscr{X}^1(M)$. On the other hand, note that $W^{cs}(\sigma^+)$ and $W^{cu}(\sigma^-)$ must have transverse intersection due to the Kupka-Smale theorem. Since $\sigma^\pm$ are in the same chain recurrent class, using the connecting lemma we can create an intersection between $W^u(\sigma^+)$ and $W^{s}(\sigma^-)$, which in turn creates a loop between $\sigma^+$ and $\sigma^-$. Then standard perturbation technique (see~\cite{GSW} for instance) will allow one to create periodic orbits that satisfy the conditions above. Therefore the following property is generic:

\noindent (P'): there exists periodic orbits $\{p_n\}_n$ that approach both $\sigma^\pm$, such that Property P(n) holds for every $n$.  
\begin{figure}[h]
    \centering
    \includegraphics[scale=0.4]{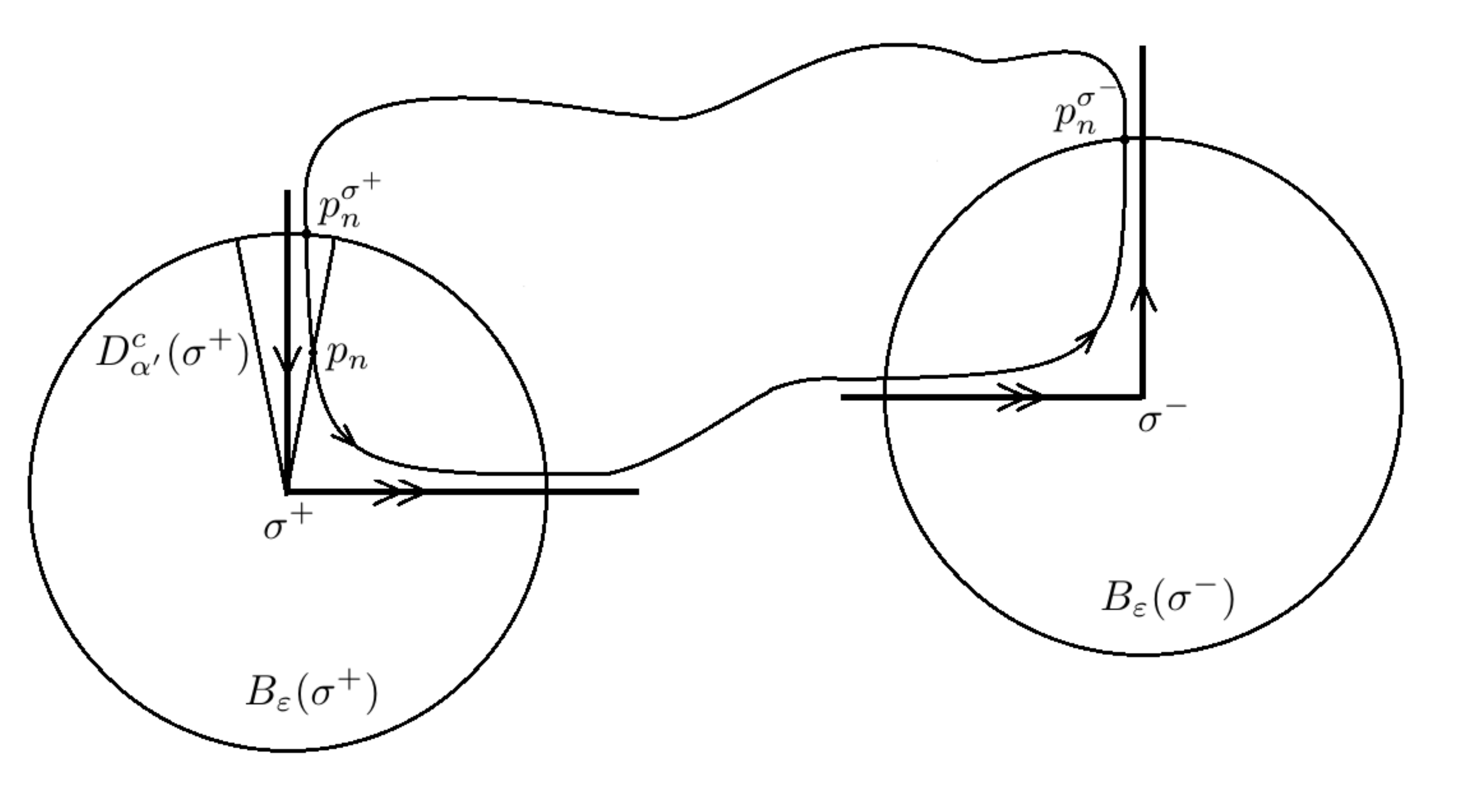}
    \caption{The periodic points $\{p_n\}$ satisfying Property (P')}
    \label{f.4}
\end{figure}

As a result, passing to the generic subset $\cR_1$ where property (P') holds, we may assume that $X$ itself has periodic orbits $\{p_n\}$ satisfying Property (P').

Below we will show that there exists $(\lambda, T_0)$-backward hyperbolic times contained in $\Orb(p_n)\cap B_r(\sigma^+)\cap D_\alpha^{cs}(\sigma^+)$, for $n$ large enough. This allows us to apply Lemma~\ref{l.main} to show that $W^u(\Orb(p_n))$ has transverse intersection with $W^{cs}(\sigma^+)$. The same argument applied to the flow $-X$ shows the transverse intersection between $W^s(\Orb(p_n))$ and $W^{cu}(\sigma^-)$, thus $p_n$ is contained in $C$ for $n$ large enough.

For convenience, we assume that $p_n\to \sigma^+$ such that $p_n$ lies on the boundary of $D^{cs}_{\alpha}(\sigma^+)$, where $\alpha\in(0,\min\{\alpha_1,\alpha_2\})$ as specified at the end of the previous section. Denote by 
$$p_n^{\sigma^+} = \phi_{t_1^n}(p_n)$$ 
where $t_1^n<0$ is the largest real number such that $p_n^{\sigma^+}\in \partial B_r(\sigma^+)$, and 
$$p_n^{\sigma^-} = \phi_{t_2^n}(p_n)$$ 
where 
$
 t^n_2 = \sup\{t<t_1^n: \phi_t(p_n)\in B_r(\sigma^-)\}.
$
See the Figure above. Since the orbit segment $\phi_{[t_2^n, t_1^n]}(p_n)\subset  M\setminus (B_r(\sigma^+)\cup B_r(\sigma^-))$, Property (P') dictates that  $t^n_1-t^n_2<\frac 1n \Pi(p_n)$. 

On the other hand, the Pliss Lemma~\cite{Pliss} shows that the set of $(\lambda,T_0)$-backward hyperbolic times on the orbit of $p_n$ have density $a>0$. Thus for $n$ large enough, there must be hyperbolic times on the subsegment $\Orb(p_n)\cap B_r(\sigma^\pm)$. We consider the following cases:
\begin{enumerate}
\item If  there exists a backward hyperbolic time $p_n'$ contained in the orbit segment $\Orb(p_n)\cap B_r(\sigma^+)$, then by Lemma~\ref{l.u} $p_n'\notin D^{u}_{\alpha}(\sigma^+)$; on the other hand, the transition time from $D^{cs}_{\alpha}(\sigma^+)$ to $D^{u}_{\alpha}(\sigma^+)$ is bounded (Lemma~\ref{l.turning.hyperbolic}); this shows that $p_n'\in D_\alpha^{cs}(\sigma^+)$;
\item 
if there exists a backward hyperbolic times  $p'_n \in \Orb(p_n)\cap B_r(\sigma^-)$, then Lemma~\ref{l.sgood} (applied to $-X$) shows that the orbit segment from $p'_n$ to $p^{\sigma^-}_n$ 
is backward contracted by the scaled linear Poincar\'e flow. By Lemma~\ref{l.glue}, $p_n^{\sigma^-}$ itself must be a backward hyperbolic time.

\noindent Next, by Lemma~\ref{l.u}, the orbit segment from $p^{\sigma^+}_n$ to $p_n$ is backward contracting, and the orbit segment from $p_n^{\sigma^-}$ to $p^{\sigma^+}_n$ has very small length comparing to the former. As a result, $p_n$ is a backward hyperbolic time.

\end{enumerate}

It then follows from both cases, that there exists a backward hyperbolic time inside $\Cl(D_\alpha^{cs}(\sigma^+))$. As a result of Lemma~\ref{l.main}, $W^u(\Orb(p_n))$ must intersect transversally with $W^{cs}(\sigma^+)$.

The same argument applied to the flow $-X$ shows that the stable manifold of $\Orb(p_n)$ intersects transversally with the unstable manifold of $\sigma^-$. We conclude that $\Orb(p_n)$ is contained in the chain recurrent class of $\sigma^\pm$, finishing the proof of this proposition.

\end{proof}

\begin{proposition}\label{p.periodicorbit} There exists a residual set $\tilde\cR\subset \mathscr{X}_*^1(M)$ such that for every $X\in \tilde{\cR}$, if $C$ is a chain recurrent class of $X$ with singularity and positive topological entropy then $C$ contains a periodic point $p$. 
\end{proposition}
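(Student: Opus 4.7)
The plan is to reduce $C$ to the sectional hyperbolic case via Proposition~\ref{p.4.2}, extract a hyperbolic ergodic measure from the positive entropy assumption, and invoke Liao's shadowing lemma (Lemma~\ref{l.shadowing}) to produce a periodic orbit which conclusion~(f) of that lemma places inside $C$.

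First I take $\tilde\cR$ to be the intersection of $\cR$, $\cR_0$, and the residual set supplied by Proposition~\ref{p.4.2}. If $C$ contains singularities of different indices, then Proposition~\ref{p.4.2} already produces a periodic point in $C$, so I may assume all singularities in $C$ share one common stable index. By the classification in Section~\ref{s.starflow}, $C$ is then sectional hyperbolic for $X$ or for $-X$; without loss of generality I treat the first case, with dominated splitting $E^s \oplus F^{cu}$ on $C$.

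Next, by the variational principle there is an ergodic $\phi_t$-invariant probability $\mu$ with $\supp\mu \subset C$ and $h_\mu(X)>0$. Such $\mu$ is not a point mass on a singularity, so by ergodicity $\mu(\Sing X)=0$ and $\mu$ is non-trivial. Uniform contraction on $E^s$ makes the Lyapunov exponents there negative. On $F^{cu}$ let $\alpha_1\ge\cdots\ge\alpha_m$ be the exponents counted with multiplicity; since the flow direction is contained in $F^{cu}$, the value $0$ appears among the $\alpha_i$. Applied to any $2$-plane $V\subset F^{cu}_x$, sectional expansion gives $|\det D\phi_t|_V|\ge Ce^{\lambda t}$, so the sum of any two exponents on $F^{cu}$ is at least $\lambda>0$; in particular $\alpha_{m-1}+\alpha_m\ge\lambda$, so at most one $\alpha_i$ can be non-positive. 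Since $0$ already occurs, the flow direction accounts for the unique non-positive exponent on $F^{cu}$, and every other exponent there is at least $\lambda>0$. Thus $\mu$ is a hyperbolic measure in the sense of Section~\ref{ss.pesin}.

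Finally I apply Lemma~\ref{l.C1pesin} to obtain an ergodic component $\tilde\mu$ of $\mu$ for $f=\phi_1$, constants $\eta,T_0,L'>0$ and a compact set $\Lambda_0\subset \supp\mu\setminus\Sing X\subset C$ of positive $\tilde\mu$-measure, such that every orbit segment $\phi_{[0,n]}(x)$ with $x,f^n(x)\in\Lambda_0$ and $n>L'$ is $(\eta,T_0)^*$ quasi-hyperbolic. For a prescribed $\varepsilon>0$ let $\delta$ be the constant supplied by Lemma~\ref{l.shadowing}; Poincar\'e recurrence applied to $(\tilde\mu,f)$ yields $x\in\Lambda_0$ and $n>L'$ with $f^n(x)\in\Lambda_0$ and $d(x,f^n(x))<\delta$. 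Lemma~\ref{l.shadowing} then produces a hyperbolic periodic point $p$, and conclusion~(f) gives $p\in C$ since $\Lambda_0\subset C$. The main obstacle is verifying hyperbolicity of $\mu$, which hinges on combining sectional expansion on $F^{cu}$ with the fact that the flow direction supplies the unique zero exponent and that $\mu(\Sing X)=0$; once this step is secured the Pesin--Liao scheme is automatic, and no additional genericity beyond what is already built into $\tilde\cR$ is required.
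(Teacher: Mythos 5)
Your proposal is correct, and it handles the mixed-index case exactly as the paper does (via Proposition~\ref{p.4.2}); the difference lies in the equal-index case. There the paper simply quotes \cite[Theorem B]{PYY}: once all singularities in $C$ have the same index, \cite[Theorem 3.7]{GSW} makes $C$ sectional hyperbolic for $X$ or $-X$, and the cited theorem directly yields periodic orbits inside a sectional hyperbolic class with positive topological entropy. You instead rebuild that step from the tools already developed in Section~\ref{s.2}: the variational principle gives an ergodic $\mu$ with $h_\mu>0$ supported in $C$, sectional expansion forces every Lyapunov exponent on $F^{cu}$ other than the flow direction's zero to be at least $\lambda>0$ (the ``sum of two exponents $\ge\lambda$'' argument, which is the standard fact and is stated correctly, including the case of a repeated smallest exponent), uniform contraction on $E^s$ gives the negative exponents, so $\mu$ is a hyperbolic measure; then Lemma~\ref{l.C1pesin}, Poincar\'e recurrence, and Lemma~\ref{l.shadowing}(f) produce a periodic point in $C$. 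This is in fact the same Pesin--Liao scheme the paper itself deploys later (in Case 1 of Lemma~\ref{l.multisingular} and in the zero-entropy part of the proof of Theorem~\ref{m.star}), so your route is consistent with the paper's machinery; what it buys is independence from the black-box citation of \cite[Theorem B]{PYY}, at the cost of having to verify hyperbolicity of positive-entropy measures on sectional hyperbolic sets, which you do. Two small points to keep explicit: the standing hypothesis of Section~\ref{ss.pesin} (a dominated splitting for $f=\phi_1$ on $\supp\mu$) is supplied by restricting $E^s\oplus F^{cu}$ to $\supp\mu$, and positive entropy rules out $\mu$ being an atom at a singularity or supported on a periodic orbit, so $\mu(\Sing(X))=0$ and $\mu$ is non-trivial, as your argument requires.
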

\begin{proof}
In view of the discussion in Section~\ref{s.starflow}, we consider the following two cases for $X\in\cR\cap \cR_0$:

\noindent {\em Case 1.} All singularities in $C$ have the same index. Then they must be of the same type: either they are all Lorenz-like or all reverse Lorenz-like. By~\cite[Theorem 3.7]{GSW}, $C$ is sectional hyperbolic for $X$ or $-X$. We may assume that $C$ is sectional hyperbolic for $X$. 
When $h_{top}(\phi_t|_C)>0$, \cite[Theorem B]{PYY} guarantees that there are periodic orbits $p$ contained in $C$. 

\noindent {\em Case 2.} $C$ contains singularities with different indices. Then the proposition follows from Proposition~\ref{p.4.2}.
\end{proof}

This finishes the proof of Step (1). 

\subsection{All periodic orbits close to $C$ must be contained in $C$}

Now we turn our attention to Step (2). The main result of this subsection is Lemma~\ref{l.multisingular} which shows that whenever a periodic orbit $\Orb(p)$ gets sufficiently close to a chain recurrent class containing a periodic orbit, we must have $W^u(p)\pitchfork W^s(x)$ for some $x\in C$. 

The proof of this lemma requires a careful analysis on how the backward hyperbolic times along the orbit of $p$ approach points in $C$. There are essentially three cases:
\begin{enumerate}
\item backward hyperbolic times near a regular point; this is the easy case;
\item backward hyperbolic times near a Lorenz-like singularity $\sigma$; this has been taken careful of by Lemma~\ref{l.cc};
\item backward hyperbolic times near a reverse Lorenz-like singularity $\sigma$; in this case we consider $-X$, under which we have a sequence of forward hyperbolic times near a Lorenz-like singularity $\sigma$; this is taken care of by the following lemma, as well as Lemma~\ref{l.cc2}.
\end{enumerate}

\begin{lemma}\label{l.l}
There exists a residual set $\tilde\cR\subset \mathscr{X}_*^1(M)$ such that for every $X\in \tilde{\cR}$, let $C$ be a chain recurrent class of $X$ that contains some Lorenz-like singularity $\sigma$ and a hyperbolic periodic point $q$. Let $\{p_n\}$ be a sequence of periodic points with $\lim_H\Orb(p_n)\subset C$. Furthermore, assume that there exists $x_n\in\Orb(p_n)$ that are $(\lambda, T_0)$-forward hyperbolic times for $\lambda\in (0,1), T_0>0 $ independent of $n$, with $x_n\to \sigma$.  Then for $n$ large enough,
$$
W^s(\Orb(p_n))\pitchfork W^u(\Orb(q))\ne\emptyset.
$$
\end{lemma}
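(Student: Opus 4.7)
The strategy is to apply Lemma~\ref{l.cc2} to a forward hyperbolic time on $\Orb(p_n)$ situated inside the center cone $D^c_\alpha(\sigma)\cap B_r(\sigma)$, taking as the ``disk through $\sigma$'' a piece of $W^u(\Orb(q))$ that has been pushed close to $\sigma$ via the $\lambda$-lemma.

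First, I would enlarge $\tilde{\cR}$ (if necessary) so that for every $X\in\tilde{\cR}$ and every chain recurrent class $C$ containing a hyperbolic periodic point $q$ and a hyperbolic singularity $\sigma$, one has a transverse heteroclinic intersection $W^u(\Orb(q))\pitchfork W^s(\sigma)\ne\emptyset$. This is standard for $C^1$-generic flows: by~\cite{BC} and the flow analogue stated as Proposition~4.8 of~\cite{PYY}, the chain recurrent class of a periodic orbit coincides with its homoclinic class, and Hayashi's connecting lemma together with the Kupka--Smale property then produces the transverse heteroclinic connection with any other hyperbolic critical element of $C$. Iterating the inclination ($\lambda$-)lemma along this heteroclinic orbit as $t\to+\infty$, $W^u(\Orb(q))$ accumulates on $W^u_{\loc}(\sigma)\cup\{\sigma\}$ in the $C^1$ topology. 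In particular, for every $\eta>0$ and every small $\beta>0$ there is a $(\dim E^u+1)$-dimensional disk $D_\eta\subset W^u(\Orb(q))\cap B_r(\sigma)$, tangent to the cone $C_\beta(E^{cu})$, whose Hausdorff distance to $W^u_{\loc}(\sigma)\cup\{\sigma\}$ is less than $\eta$.

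Second, I would locate a forward hyperbolic time $y_n\in\Orb(p_n)\cap D^c_\alpha(\sigma)\cap B_r(\sigma)$. Since $\lim_H\Orb(p_n)\subset C$, the direction $\zeta(x_n)\in G^1$ accumulates on the extended class of $C$ over $\sigma$, which is contained in $\cL^c\cup\cL^u$ because $\sigma$ is Lorenz-like. Passing to a subsequence, either $x_n\in D^c_\alpha(\sigma)$ for all $n$ (and I set $y_n=x_n$), or $x_n\in D^u_\alpha(\sigma)$ for all $n$. In the latter case, the orbit segment of $p_n$ through $x_n$ must enter $B_r(\sigma)$ via $D^c_\alpha(\sigma)$ (by the extended-class analysis of Section~\ref{s.3.2}) and exit via $D^u_\alpha(\sigma)$, so tracing the orbit backward from $x_n$ through the ``turning region'' of bounded length (Lemma~\ref{l.turning.hyperbolic}) yields some $s_n>0$ with $y_n:=\phi_{-s_n}(x_n)\in D^c_\alpha(\sigma)\cap B_r(\sigma)$. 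Lemma~\ref{l.sgood} ensures that the orbit segment $\{\phi_t(y_n)\}_{[0,s_n]}\subset B_r(\sigma)$ is forward contracting for $E^{ss}\subset N$ under the scaled linear Poincar\'e flow, so Lemma~\ref{l.glue}(1) together with the forward hyperbolicity of $x_n$ promotes $y_n$ itself to a forward hyperbolic time, with constants depending only on $\lambda$, $T_0$ and $r$.

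Finally, I would imitate the geometric comparison in the proof of Lemma~\ref{l.cc2} with $D_\eta$ replacing the disk through $\sigma$. Liao's estimate provides a stable disk $W^s(y_n)$ tangent to $C_{\beta'}(E^{ss})$ of diameter at least $\delta\|X(y_n)\|\gtrsim y_n^c$, while $D_\eta$ is tangent to $C_\beta(E^{cu})$ and lies within Hausdorff distance $\eta$ of $W^u_{\loc}(\sigma)\cup\{\sigma\}$. The same cone/distance estimate as in Lemma~\ref{l.intersection} gives $W^s(y_n)\pitchfork D_\eta\ne\emptyset$ provided $y_n^{ss}+\eta$ is small compared to $\diam W^s(y_n)$. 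Since $y_n^{ss}<\alpha y_n^c$ and $\alpha<\alpha_3$, this reduces to $\eta\ll y_n^c$: when $y_n$ is obtained by shifting, $y_n^c$ is comparable to $r$ and any sufficiently small fixed $\eta$ works; when $y_n=x_n\to\sigma$, the $\lambda$-lemma lets me choose $\eta_n=o(y_n^c)$ and a corresponding $D_{\eta_n}$. Either way $W^s(y_n)\subset W^s(\Orb(p_n))$ meets $D_{\eta_n}\subset W^u(\Orb(q))$ transversally, proving the lemma. The main technical difficulty is the shifting step: when $x_n$ does not lie in the center cone, preserving uniform forward hyperbolicity during the backward shift through the turning region has to be done carefully via Lemmas~\ref{l.sgood} and~\ref{l.glue} to keep the hyperbolicity constants independent of $n$.
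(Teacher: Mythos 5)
Your overall scaffolding (generic heteroclinic intersection of $W^u(\Orb(q))$ with $W^s(\sigma)$, inclination lemma, Liao stable disks at forward hyperbolic times, a Lemma~\ref{l.cc2}/\ref{l.intersection}-type cone comparison) is the right one, but the final intersection step has a genuine gap: approximating $W^u_{\loc}(\sigma)\cup\{\sigma\}$ in Hausdorff distance is not the right target. Your point $y_n$ lies in the center cone $D^c_\alpha(\sigma)$, i.e.\ essentially on the positive $E^c$-axis at distance $y_n^c$ from $\sigma$; since $E^c$ is a \emph{stable} direction, $y_n$ is at distance comparable to $y_n^c$ from $W^u_{\loc}(\sigma)\cup\{\sigma\}$, and its stable disk $W^s(y_n)$ (tangent to the $E^{ss}$-cone, of diameter $\delta\|X(y_n)\|\approx\delta y_n^c$) stays at distance comparable to $y_n^c$ from that set as well. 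A disk $D_{\eta_n}$ that is merely $\eta_n$-Hausdorff-close to $W^u_{\loc}(\sigma)\cup\{\sigma\}$ with $\eta_n=o(y_n^c)$ is therefore confined to a region which $W^s(y_n)$ never reaches: in cu-coordinates its domain lies in $\{|x^c|\lesssim\eta_n\}$, so it need not contain any point ``underneath'' $y_n$, and the cone/distance estimate of Lemma~\ref{l.intersection} simply does not apply. What is actually needed is a disk that sweeps through the \emph{upper half of the center cone} arbitrarily close to the $E^c$-axis. The paper obtains it by taking a heteroclinic point $a\in W^{s,+}(\sigma)\cap W^u(\Orb(q))$ on the \emph{correct branch} of $W^s(\sigma)\setminus W^{ss}(\sigma)$ (the branch along which the orbits of $p_n$ enter), flow-saturating a disk $D\ni a$ into $\tilde D=\{\phi_t(D):t>0\}$, and observing that $\tilde D$ contains the forward orbit of $a$ marching into $\sigma$ along $E^c$ together with cu-cone disks around it, so that it extends to a $(\dim E^u+1)$-dimensional disk containing $\sigma$ and tangent to $C_\beta(E^{cu})$ — exactly the hypothesis of Lemma~\ref{l.cc2}. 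Your proposal never selects the branch, and a connection to the wrong branch sweeps the opposite half of the center cone and yields no intersection; this branch selection is an essential ingredient, not a refinement.

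Two secondary points. First, your claim that in the shifted case $y_n^c$ is comparable to $r$ is false: the backward shift through the turning region takes time at most $T_\alpha$ (Lemma~\ref{l.turning.hyperbolic}), so $y_n$ is still at distance $O(d(x_n,\sigma))\to0$ from $\sigma$; thus even in your own scheme you are always in the ``$\eta_n=o(y_n^c)$'' regime where the gap above bites. The paper avoids the issue by pushing the hyperbolic time all the way back to the entry point $z_n\in\partial B_r(\sigma)$ (Lemma~\ref{l.sgood} plus Lemma~\ref{l.glue}), where the flow speed, hence the size of $W^s(z_n)$, is uniformly bounded below, and then applies Lemma~\ref{l.cc2} there. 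Second, the promotion of $y_n$ (or $z_n$) to a forward hyperbolic time uses item (2) of Lemma~\ref{l.glue} (finite forward-contracting segment glued to a hyperbolic time at its endpoint), not item (1); this is minor, but the citation should be corrected.
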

\begin{proof}
Recall that $E^{ss}_\sigma$ is the subspace spanned by the (generalized) eigenspaces corresponding to the eigenvalues $\lambda_1, \ldots, \lambda_{s-1}$; furthermore we have $E^{ss}_\sigma \oplus E_\sigma^c = E_\sigma^{cs}$.

Consider the strong stable manifold of $\sigma$, $W^{ss}(\sigma)$ that is tangent to $E^{ss}_\sigma$ at $\sigma$. It exists because of the dominated splitting $T_\sigma M = E^{ss}_\sigma \oplus E_\sigma^{cu}$ with  $E_\sigma^{cu} = E^{c}_\sigma\oplus E^u_\sigma$. Moreover, $W^{ss}(\sigma)$ locally divides the stable manifold $W^s(\sigma)$ into two branches, which we denote by $W^{s,\pm}(\sigma)$.

We may assume that ${p_n}$ themselves are forward hyperbolic times with  $p_n\to \sigma$. Fix some $r>0$ small enough, we consider $t_n<0$ the largest real number such that $\phi_{t_n}(p_n)\in \partial B_r(\sigma)$. 
In particular, $z_n:= \phi_{t_n}(p_n) \to z\in W^{s, +}(\sigma)\cap C\cap \partial B_r(\sigma)$. Since the orbit of $p_n$ can only approach $\sigma$ along the center direction $E^c_\sigma$, we must have $z_n\in \Cl(D_\beta^c(\sigma))$, for some $\beta = \beta(r)>0$. By Lemma~\ref{l.sgood}, the orbit segment $\phi_{[t_n,0]}(p_n)$ is forward contracting. It follows from Lemma~\ref{l.glue} that $z_n$ itself is a $(\lambda, T_0)$-forward hyperbolic time, and possesses stable manifold with size proportional to the flow speed at $z_n$. 

On the other hand, we have $W^{s, +}(\sigma)\cap W^u(\Orb(q))\ne\emptyset$, a courtesy of the connecting lemma~\cite{BC} (see also~\cite[Proposition 4.9]{PYY}). Let $a$ be a point of intersection between $W^{s, +}(\sigma)$ and $W^u(\Orb(q))$, and $D$ a disk in $W^u(\Orb(q))$ with $a\in D\subset W^u(\Orb(q))$. By the inclination lemma, $\phi_t(D)$ approximates $W^u(\sigma)$ as $t\to\infty$.

Note that $\tilde D := \{\phi_t(D): t>0\}$ is tangent to the cone $C_\beta(E^{cu})$ and has dimension $\dim E^u_\sigma+ 1$. See figure~\ref{pic.cc2}. Shrinking $r$ such that $\beta(r)<\alpha_3$ and applying Lemma~\ref{l.cc2},\footnote{ $\tilde D$ defined in this way dos not contain $\sigma$. However it can be extended to a disk $\overline D$ that contains $\sigma$ in its interior. Note that $\overline D$ and $\tilde D$ coincide within the upper half of the cone $D^c_\alpha(\sigma)$, which contains the point of the transverse intersection with $W^s(z_n)$.} we obtain $W^s(z_n)\pitchfork \{\phi_t(D): t>0\}\ne\emptyset$ for all $n$ large enough, with which we conclude the proof. 

\end{proof}
\begin{remark}
The proof of the lemma is reminiscent of~\cite[Corollary E]{PYY} with two major differences. Firstly, in~\cite{PYY}, $C$ is Lyapunov stable. This guarantees not only the existence of a periodic orbit $q\in C$, but the disk $D$ is contained in the class $C$, which immediately allows one to conclude that $p_n\in C$ for $n$ large enough. Without Lyapunov stability,  we only obtain the chain attainability from $C$ to $p_n$. Secondly, $C$ is sectional hyperbolic in~\cite{PYY}. Therefore nearby periodic orbits have dominated splitting $E^s\oplus F^{cu}$ and, consequently, have stable manifolds with uniform size.
\end{remark}

Now we are ready to state the main lemma of this subsection, which allows us to establish the chain attainability from nearby periodic orbits to the class $C$. Note that the lemma does not impose any condition on the type of singularities contained in $C$, therefore can be applied to $-X$.

\begin{lemma}\label{l.multisingular}
There exists a residual set $\tilde\cR\subset \mathscr{X}_*^1(M)$ such that for every $X\in \tilde{\cR}$, let $C$ be a chain recurrent class of $X$ with singularities and a hyperbolic periodic point. Then there exists a neighborhood $U$ of $C$ such that every periodic orbit $\Orb(p)\subset U$ satisfies
$$
W^u(p)\pitchfork W^s(x)\ne\emptyset
$$
for some $x\in C$.
\end{lemma}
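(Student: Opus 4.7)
The plan is to argue by contradiction. Suppose no such neighborhood $U$ exists; then there are shrinking neighborhoods $U_n\searrow C$ and periodic orbits $\Orb(p_n)\subset U_n$ for which $W^u(\Orb(p_n))\pitchfork W^s(x)=\emptyset$ for every $x\in C$. Since $X$ is a star vector field, the Liao-type estimate of \cite[Lemma~2.1]{GSW} together with the Pliss lemma (applied exactly as in the proof of Proposition~\ref{p.4.2}) produces $(\lambda,T_0)$-backward hyperbolic times along each $\Orb(p_n)$ with density bounded below by some $a>0$ independent of $n$. Pick such $x_n\in\Orb(p_n)$; since $\Orb(p_n)\subset U_n\searrow C$, every accumulation point of $\{x_n\}$ lies in $C$, so after extraction we may assume $x_n\to x\in C$. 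The proof then case-splits on the nature of $x$.

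If $x$ is regular, then $\|X(x_n)\|\to\|X(x)\|>0$ and by Liao's classical work $W^u(x_n)$ contains a disk of size at least $\delta\|X(x_n)\|$, tangent to a uniform cone around $E^u$; along a subsequence these Hausdorff-converge to a disk $D^u\ni x$ of uniform size. Because $C$ contains the hyperbolic periodic orbit $\Orb(q)$ and $X$ is $C^1$ generic, $C=H(q)$ (\cite{BC}; see also \cite[Proposition~4.8]{PYY}), so $W^s(\Orb(q))$ is dense in $C$, and in particular $x\in\overline{W^s(\Orb(q))}$. A standard $\lambda$-lemma argument, together with the dimension count forced by the dominated splitting on $C$, produces a transverse intersection of $D^u$ with $W^s(\Orb(q))$, yielding $W^u(\Orb(p_n))\pitchfork W^s(\Orb(q))\ne\emptyset$ for $n$ large, contradicting the hypothesis since $\Orb(q)\subset C$.

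If $x=\sigma^+$ is Lorenz-like, Lemma~\ref{l.u} forbids backward hyperbolic times from sitting in $\Cl(D_\alpha^u(\sigma^+))$, so Lemma~\ref{l.glue} lets us translate $x_n$ backward along its orbit (remaining a backward hyperbolic time) into $\Cl(D_\alpha^{cs}(\sigma^+))\cap B_r(\sigma^+)$; Lemma~\ref{l.main} then delivers $W^u(x_n)\pitchfork W^{cs}(\sigma^+)=W^s(\sigma^+)\ne\emptyset$, contradicting the hypothesis at $x=\sigma^+$. If $x=\sigma^-$ is reverse Lorenz-like, I reverse the flow: taking $\tilde{\cR}$ symmetric under $X\mapsto-X$, under $-X$ the singularity $\sigma^-$ becomes Lorenz-like, the $x_n$ become forward hyperbolic times, and $\Orb(q)$ is still a hyperbolic periodic orbit in the same chain recurrent class $C$. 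Lemma~\ref{l.l} applied to $-X$ yields $W^s_{-X}(\Orb(p_n))\pitchfork W^u_{-X}(\Orb(q))\ne\emptyset$, i.e.\ $W^u(\Orb(p_n))\pitchfork W^s(\Orb(q))\ne\emptyset$ for $X$, again contradicting the hypothesis. By the classification in Section~\ref{s.starflow}, every singularity in $C$ is Lorenz-like or reverse Lorenz-like, so the three cases are exhaustive.

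The main obstacle is the regular case: converting the mere denseness of $W^s(\Orb(q))$ in $H(q)=C$ into an honest transverse intersection with the uniformly-sized disks $D^u$ will require a careful $\lambda$-lemma argument, possibly choosing a periodic orbit $q'$ homoclinically related to $q$ with transverse homoclinic point close to $x$ so that a local leaf of its stable manifold has size controlled uniformly. The two singular cases reduce directly to Lemmas~\ref{l.main} and~\ref{l.l}, with the reverse Lorenz-like case relying on the flow-reversal trick since those lemmas are formulated only for Lorenz-like singularities of $X$.
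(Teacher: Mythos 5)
Your overall skeleton (contradiction, Liao--Pliss backward hyperbolic times, case split regular / Lorenz-like / reverse Lorenz-like, flow reversal plus Lemma~\ref{l.l} for the reverse Lorenz-like case) matches the paper, but two of your three cases have genuine gaps. First, the regular case is not actually proved, and the sketch you offer would not close it: density of $W^s(\Orb(q))$ in $H(q)=C$ gives points of the stable manifold near $x$, but it gives no uniform-size, uniformly transverse stable plaques there, so no $\lambda$-lemma argument forces an intersection with the disks $W^u(x_n)$ (this is exactly the pitfall highlighted in Remark~\ref{r.transverse}); moreover your ``dimension count forced by the dominated splitting on $C$'' is unavailable, since $C$ may contain singularities of different indices and then carries no dominated splitting at all. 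The paper avoids this entirely by working with the limit $\mu$ of the empirical measures of $\Orb(p_n)$: if an ergodic component charging the set of backward hyperbolic times gives no weight to $\Sing(X)$, it is a hyperbolic measure by \cite[Theorem 5.6]{GSW} (this is where the star condition enters), the $C^1$ Pesin theory (Lemma~\ref{l.C1pesin}) yields stable manifolds of uniform size on a positive-measure set away from the singularities, and these are intersected with the uniform unstable manifolds at the backward hyperbolic times as in Lemma~\ref{l.shadowing}(f). You cannot get this from topological density of $W^s(\Orb(q))$ alone.

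Second, in the Lorenz-like case your reduction is invalid: you claim Lemma~\ref{l.glue} lets you ``translate $x_n$ backward along its orbit (remaining a backward hyperbolic time) into $\Cl(D^{cs}_\alpha(\sigma^+))$.'' The glue lemma only propagates a backward hyperbolic time to a \emph{later} point of the orbit, and only when the connecting segment is itself backward contracting; earlier points are tails of the Pliss-type product and inherit no estimate, which is precisely why hyperbolic times have only positive density rather than being shift-invariant. Since a backward hyperbolic time $x_n\to\sigma^+$ may well sit in the turning region $B_r(\sigma^+)\setminus(D^{cs}_\alpha\cup D^u_\alpha)$, where Lemma~\ref{l.intersection} gives nothing, your single-point argument breaks down. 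The paper's route (Lemma~\ref{l.cc}) uses the \emph{positive density} of backward hyperbolic times along the passage near $\sigma^+$, the uniform bound $T_\alpha$ on the turning time (Lemma~\ref{l.turning.hyperbolic}) and the absence of backward hyperbolic times inside $\Cl(D^u_\alpha)$ (Lemma~\ref{l.u}) to locate a hyperbolic time already inside the $cs$-cone, and only then applies Lemma~\ref{l.main}; note that keeping the density information (rather than extracting one sequence $x_n\to x$) is also what makes the measure-theoretic case decomposition work. Your reverse Lorenz-like case, by contrast, is essentially the paper's argument and is fine.
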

\begin{proof}
We prove by contradiction. Assume that there is a sequence of periodic orbits $\Orb(p_n)\subset U_n$ with $\cap_n U_n=C$, such that $W^u(p)\pitchfork W^s(x)=\emptyset$ for every $x\in C$. It is easy to see that the period of $p_n$  must tend to infinity; otherwise, we could take a subsequence of $\{p_n\}$ that converges to a periodic orbit $p_0\in C$; by the star assumption, $p_0$ is hyperbolic, and must be homoclinically related with $p_n$ for $n$ large enough, a contradiction.


By~\cite[Theorem 5.7]{GSW}, the index of $p_n$ coincides with $\Ind_C$ for all $n$ large enough. By~\cite[Lemma 2.1]{GSW}, there exists $\lambda\in (0,1), T_0>0$ such that $\Orb(p_n)$ contains $(\lambda,T_0)$-backward hyperbolic times $x_n$. Moreover, the collection of  such points $\Lambda_n = \{x_n\in\Orb(p_n): x_n\mbox{ is a backward  hyperbolic time}\}$ have positive density (independent of $n$) in $\Orb(p_n)$  with respect to the empirical measure on $\Orb(p_n)$, thanks to the Pliss lemma~\cite{Pliss}. 

Taking subsequence if necessary, we write $\tilde{C}\subset C$ for the Hausdorff limit of $\Orb(p_n)$, and $\Lambda = \limsup_{n\to\infty} \Lambda_n\subset \tilde{C}$ (note that it is not invariant). We may also assume that the empirical measure $\mu_n$ on $\Orb(p_n)$ converges to an invariant measure $\mu$ supported on $\tilde{C}$. The backward hyperbolic times having uniform positive density implies that $\mu(\Lambda)>0$. 

We consider the following two cases:

\noindent {\bf Case 1.} There is an ergodic component of $\mu$, denoted by $\mu_1$, which satisfies $\mu_1(\Lambda)>0$ and $\mu_1(\Sing)=0$. 

Then $\mu_1$ must be a non-trivial hyperbolic measure, thanks to~\cite[Theorem 5.6]{GSW}.
The same argument used in Lemma~\ref{l.shadowing} (f) shows that for $n$ large enough, $W^u(\Orb(p_n))$ has transverse intersection with the stable manifold of some point in $\supp \mu_1$. Roughly speaking, every regular point in $\Lambda\cap\,\supp \mu_1$ have stable manifold (with uniform size if we choose a subset of $\Lambda$ away from all singularities; note that such subset still has positive $\mu_1$ measure), which must intersect transversally with the unstable manifold of the backward hyperbolic times $x_n\in\Lambda_n$ (recall that such points have unstable manifold up to the flow speed; if we take them uniformly away from all singularities, then their unstable manifolds also have uniform size), a contradiction. 

\noindent {\bf  Case 2.} Every ergodic component $\mu_1$ of $\mu$ with $\mu_1(\Lambda)>0$ is supported on some singularities $\{\sigma\}$. 

As per our discussion in Section~\ref{s.starflow},   singularities in $C$ must be either Lorenz-like or reverse Lorenz-like. We further consider two subcases:

\noindent {\bf Subcase 1.} One of those $\sigma$ is reverse Lorenz-like\footnote{Note that this case does not happen when $C$ is sectional hyperbolic.}.

In this case we consider the vector field $-X$. Note that backward hyperbolic times for $X$ are forward hyperbolic times for $-X$. Therefore we have a sequence of forward hyperbolic times on the orbit of $p_n$, approaching a Lorenz-like singularity $\sigma$. We are in a position to apply Lemma~\ref{l.l}, which shows that $$
W^{s, -X}(\Orb(p_n))\pitchfork W^{u, -X}(q)\ne\emptyset
$$
for a hyperbolic periodic point $q\in C$. Reverse back to $X$, we see that $W^{u}(\Orb(p_n))\pitchfork W^{s}(x)\ne\emptyset$, which contradictions with our assumption on $C$.

\noindent {\bf Subcase 2.} One of those $\sigma$ is Lorenz-like.

In this case we have backward hyperbolic times in a neighborhood of a Lorenz-like singularity with positive density. By Lemma~\ref{l.cc}, $W^u(\Orb(p_n))$ intersect transversely with $W^{cs}(\sigma^+)$, which is a contradiction.



The proof is now complete.
\end{proof}

In the proof of Lemma~\ref{l.multisingular}, the assumption that $C$ contains a periodic orbit is only used in Case 2, Subcase 1. Since this case is impossible when $C$ is sectional hyperbolic, we obtain the following proposition which has intrinsic interest. 

\begin{proposition}\label{p.l}
Let $C$ be a sectional hyperbolic chain recurrent class for $C^1$ generic star flow $X$. Assume that $C$ contains a singularity $\sigma$. Then there exists a neighborhood $U$ of $C$, such that $C$ is chain attainable from all periodic orbits in $U$.
\end{proposition}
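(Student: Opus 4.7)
\emph{Proof plan.} The plan is to essentially re-run the argument of Lemma~\ref{l.multisingular} while observing that the one step there which required a periodic orbit in $C$ becomes vacuous under sectional hyperbolicity. Proceed by contradiction: suppose the conclusion fails, so there exist a sequence of neighborhoods $U_n\downarrow C$ and periodic orbits $\Orb(p_n)\subset U_n$ from which $C$ is not chain attainable; in particular, $W^u(\Orb(p_n))\pitchfork W^s(x)=\emptyset$ for every $x\in C$. As in the proof of Lemma~\ref{l.multisingular}, \cite[Theorem 5.7]{GSW}, \cite[Lemma 2.1]{GSW} and the Pliss lemma yield uniform constants $\lambda\in(0,1)$, $T_0>0$ and subsets $\Lambda_n\subset\Orb(p_n)$ of $(\lambda,T_0)$-backward hyperbolic times having uniform positive density. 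Passing to subsequences, the empirical measures $\mu_n$ on $\Orb(p_n)$ converge weakly-$*$ to a measure $\mu$ supported in $C$, and the upper limit $\Lambda$ of $\{\Lambda_n\}$ satisfies $\mu(\Lambda)>0$.

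Now invoke the classification from Section~\ref{s.starflow}: since $C$ is sectional hyperbolic, every singularity in $C$ is Lorenz-like for $X$. This rules out precisely Case 2 Subcase 1 in the proof of Lemma~\ref{l.multisingular}, namely the only case in which reverse Lorenz-like singularities forced us to pass to $-X$ and apply Lemma~\ref{l.l}, and hence the only case requiring a periodic point $q\in C$. Two scenarios remain. \emph{Case 1:} some ergodic component $\mu_1$ of $\mu$ with $\mu_1(\Lambda)>0$ is non-atomic, so by~\cite[Theorem 5.6]{GSW} it is hyperbolic; the shadowing argument of Lemma~\ref{l.multisingular} then produces a regular $x\in\supp\mu_1\subset C$ whose stable manifold (of uniform size) transversely intersects $W^u(\Orb(p_n))$ for $n$ large. \emph{Case 2:} every ergodic component of $\mu$ charging $\Lambda$ is a point mass on some Lorenz-like $\sigma\in C$, and Lemma~\ref{l.cc} yields $W^u(\Orb(p_n))\pitchfork W^{cs}(\sigma)\ne\emptyset$; since $\sigma$ is Lorenz-like, $W^{cs}(\sigma)=W^s(\sigma)$, so this intersection again lies in the stable manifold of a point of $C$.

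In either case, we obtain a point $y\in W^u(\Orb(p_n))$ whose forward orbit converges to a point of $C$. Tracing an $(\varepsilon,T)$-pseudo-orbit backward from $y$ along $W^u(\Orb(p_n))$ to land near $\Orb(p_n)$ and forward from $y$ to land in $C$ then establishes chain attainability from $\Orb(p_n)$ to $C$, contradicting the choice of $p_n$. As in Lemma~\ref{l.multisingular}, the subtle point is that the stable and unstable manifolds furnished by the backward hyperbolic times have sizes only proportional to the local flow speed; this is the main obstacle and is handled exactly as before, by working throughout with the scaled linear Poincar\'e flow and applying the shadowing Lemma~\ref{l.shadowing} to make the pseudo-orbit construction legitimate even when the intersection point $y$ is close to the singularity $\sigma$.
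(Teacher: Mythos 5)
Your proposal is correct and takes essentially the same route as the paper, whose proof of Proposition~\ref{p.l} is exactly the observation that the periodic-orbit hypothesis in Lemma~\ref{l.multisingular} is used only in the reverse Lorenz-like subcase, which cannot occur when $C$ is sectional hyperbolic for $X$. Your closing paragraph, deducing chain attainability from the nonempty (transverse) intersection of $W^u(\Orb(p_n))$ with $W^s(x)$ for some $x\in C$, just makes explicit the step the paper leaves implicit when restating the conclusion of Lemma~\ref{l.multisingular} as chain attainability.
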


\subsection{Proof of Theorem~\ref{m.star}}

Now we are ready to prove Theorem~\ref{m.star}. By Proposition~\ref{p.periodicorbit}, every singular chain recurrent class with positive entropy contains a periodic point. We then apply Lemma~\ref{l.multisingular} to $C$ for $X$ and $-X$. This shows that there exists a neighborhood $U$ of $C$, such that all periodic orbits in $U$ are homoclinically related with $C$, therefore, are contained in $C$. Finally we use Lemma~\ref{l.isolated} to conclude that $C$ is isolated. 

The only remaining case is chain recurrent classes with zero entropy. By Proposition~\ref{p.4.2}, singularities in such classes must have the same stable index. Then~\cite[Theorem 3.7]{GSW} shows that $C$ is sectional hyperbolic for $X$ or $-X$. $C$ cannot contain any periodic orbit; otherwise it must coincide with the homoclinic class of said periodic orbit, resulting in  positive topological entropy.

To conclude the proof, we need to show that the only ergodic invariant measures supported on $C$ are point masses of singularities. 

For this purpose, let $\mu$ be an ergodic invariant measure with $\supp\mu\subset C$. We prove by contradiction and assume that $\mu\ne\delta_\sigma$ for any $\sigma\in\Sing(X)$. By~\cite{GSW}, $\mu$ is a non-trivial hyperbolic measure whose support contains regular orbits of $X$. By Lemma~\ref{l.C1pesin} and~\ref{l.shadowing} (f), there exists a periodic orbit in $C$, a contradiction.

\subsection{Proof of the corollary}
We conclude this paper with the proof of the Corollary. 

\begin{proof}[Proof of Corollary~\ref{mc.lyapunovstable}]
In~\cite{Liao} it is proven that $C^1$ generic star flows have only finitely many periodic sinks. As a result, if $X$ has infinitely many distinct Lyapunov stable chain recurrent classes $\{C_n\}_{n=1}^\infty$, then we may assume that $C_n$'s are non-trivial and approach, under Hausdorff topology, to a chain recurrent class $C$. Note that $C$ cannot be trivial since trivial chain recurrent classes, i.e., periodic orbits and singularities, are all hyperbolic and  isolated due to the star assumption. Therefore $C$ is non-trivial and sectional hyperbolic with  zero topological entropy due to Theorem~\ref{m.star}.   

Denote by $\lambda_C>0$ the sectional volume expanding rate on $F^{cu}$, 
then by the continuity of the dominated splitting (see~\cite[Appendix B]{BDV} and~\cite{ArPa10}), nearby classes  $C_n$ must be sectional hyperbolic and have sectional volume expanding rate $\lambda_{C_n}>\lambda_C/2$. Since $C_n$ are Lyapunov stable, we are in a position to apply \cite[Theorem C]{PYY} on $C_n$, and see that $h_{top}(X|_{C_n})>\lambda_C/2$. 
By the variational principle, we can take $\mu_n$ an ergodic invariant measure supported on $C_n$, such that $h_{\mu_n}(X)\ge\lambda_C/2$.

On the other hand, \cite[Theorem A]{PYY} shows that $X$ is robustly entropy expansive in s small neighborhood of $C$; in particular, this together with~\cite{B72} shows that the metric entropy must be upper semi-continuous in a small neighborhood of $C$. Let $\mu$ be any limit point of $\mu_n$ in weak-* topology, then we see that $\supp \mu\subset C$ and $h_\mu(X)\ge\lambda_C/2$. It then follows from the variational principle that $h_{top}(X|_C)\ge\lambda_C/2$, a contradiction.

\end{proof}


\begin{thebibliography}{99}


\bibitem{ArPa10} V.~Ara{\'u}jo and M.~J. Pacifico; 
\newblock {\em
Three-dimensional flows}, volume~53 of \emph{Ergebnisse der Mathematik und
ihrer Grenzgebiete. 3. Folge. A Series of Modern Surveys in Mathematics
[Results in Mathematics and Related Areas. 3rd Series. A Series of Modern
Surveys in Mathematics]}. \newblock Springer, Heidelberg, 2010. \newblock %
With a foreword by Marcelo Viana.



\bibitem{BC}
C. Bonatti and S. Crovisier.
\newblock Recurrence et g\'en\'ericit\'e.
\newblock {\em Invent. Math.}, 158:33--104, 2004.

\bibitem{BD}
C. Bonatti and A. da Luz.
\newblock Star flows and multisingular hyperbolicity,
\newblock  arXiv:1705.05799

\bibitem{BDV}
C. Bonatti, L. Diaz and M. Viana.
\newblock Dynamics beyond uniform hyperbolicity.
\newblock Encyclopedia of Mathematical Sciences, 102. Mathematical Physics, III. Springer-Verlag, Berlin, 2005.


\bibitem{B72}
R. Bowen.
\newblock Entropy expansive maps.
\newblock {\em Trans. Amer. Math. Soc.}, 164:323--331, 1972.

\bibitem{C06}
S. Crovisier.
\newblock Periodic orbits and chain-transitive sets of $C^1$-diffeomorphisms. 
\newblock {\em Publ. Math. Inst. Hautes \'Etudes Sci.} No. 104 (2006), 87--141.

\bibitem{CDYZ}
S. Crovisier, A. da Luz, D. Yang, J. Zhang.
\newblock On the notions of singular domination and (multi-)singular hyperbolicity.
\newblock {\em  Sci. China Math.} 63 (2020), no. 9, 1721--1744.


\bibitem{daLuz}
A. da Luz.
\newblock  Hyperbolic sets that are not contained in a locally maximal one. 
\newblock {\em Discrete Contin. Dyn. Syst.} 37 (2017), no. 9, 4923--4941.


\bibitem{Ding}
H. Ding.
\newblock Disturbance of the homoclinic trajectory and applications.
\newblock {\em Beijing Daxue Xuebao}, no. 1 (1986), 53--63.



\bibitem{Franks}
J. Franks.
\newblock Necessary conditions for stability of diffeomorphisms.
\newblock {\em Trans. Amer. Math. Soc.}, 158 (1971), 301--308.

\bibitem{G02}
S. Gan.
\newblock A generalized shadowing lemma.
\newblock {\em Discrete Contin. Dyn. Syst.}, 8:627--632, 2002.


\bibitem{GY}
S. Gan and D. Yang.
\newblock Morse-Smale systems and horseshoes for three dimensional singular flows.
\newblock {\em Ann. Sci. \'Ec. Norm. Sup\'er.},  51:39--112, 2018. 

\bibitem{GW}
S. Gan and L. Wen.
\newblock Nonsingular star flows satisfy Axiom A and the no-cycle condition,
\newblock {\em Invent. Math.}, 164:279--315, 2006.

\bibitem{G76}
\newblock J. Guckenheimer.
\newblock A strange, strange attractor. 
\newblock The Hopf bifurcation theorem and its applications, pages 368--381. Springer Verlag, 1976.

\bibitem{GW79}
\newblock J. Guckenheimer and R. F. Williams.
\newblock Structural stability of Lorenz attractors. 
\newblock {\em Publ. Math. IHES}, 50:59--72, 1979.

\bibitem{H92}
S. Hayashi.
\newblock  Diffeomorphisms in $\cF^1(M)$ satisfy Axiom A. 
\newblock {\em Ergodic Theory \& Dynam. Systems} 12 (1992), 233--253.



\bibitem{LW}
C. Li and L. Wen.
\newblock $\mathscr{X}^*$ plus Axiom A does not imply no-cycle.
\newblock {\em J. Differential Equations} 119 (1995), no. 2, 395--400.

\bibitem{LGW}
M. Li, S. Gan and L. Wen.
\newblock Robustly transitive singular sets via approach of extended linear Poincar\'e flow.
\newblock {\em Discrete Contin. Dyn. Syst.}, 13:239--269, 2005.



\bibitem{Liao81}
S. T. Liao.
\newblock Obstruction sets. II. (in Chinese)
\newblock {\em Beijing Daxue Xuebao} 2 (1981), 1--36.

\bibitem{Liao}
S. T. Liao. 
\newblock On $(\eta, d)$-contractible orbits of vector fields. 
\newblock {\em Systems Science and Mathematical Sciences.}, 2:193--227, 1989.

\bibitem{Liao96}
S. T. Liao.
\newblock The qualitative theory of differential dynamical systems. 
\newblock Science Press, 1996.


  
\bibitem{Mane}
R. Ma\~n\'e.
\newblock Contributions to the stability conjecture.
\newblock {\em Topology} 17 (1978), no. 4, 383--396.  

\bibitem{MM}
R. Metzger and C. A. Morales.
\newblock Sectional-hyperbolic systems. 
\newblock {\em Ergodic Theory \& Dynam. Systems} 28:1587--1597, 2008.


\bibitem{MPP99} 
C. A. Morales, M. J. Pacifico and E. R. Pujals.
\newblock Singular hyperbolic systems.
\newblock {\em Proc. Am. Math. Soc}. 127, (1999), 3393--3401.

\bibitem{MPP04} 
C. A.Morales, M. J. Pacifico and E. R. Pujals.
\newblock Robust transitive singular sets for $3-$flows are partially hyperbolic attractors or repellers.
\newblock {\em Ann. of Math.}, 160:375--432, 2004.



\bibitem{PYY}
M. J. Pacifico, F. Yang and J. Yang.
\newblock Entropy theory for sectional hyperbolic flows.
\newblock {\em Annales de l'Institut Henri Poincar\'e C, Analyse non lin\'eaire}, 2020.

\bibitem{Pliss}
V. Pliss.
\newblock A hypothesis due to Smale.
\newblock {\em Diff. Eq.} 8 (1972), 203--214.


\bibitem{GSW}
Y. Shi, S. Gan and L. Wen.
\newblock On the singular hyperbolicity of star flows.
\newblock {\em J. Mod. Dyn.} 8:191--219, 2014. 





\bibitem{ZGW}
S. Zhu, S. Gan and L. Wen.
\newblock  Indices of singularities of robustly transitive sets.
\newblock {\em Discrete Contin. Dyn. Syst.} 21 (2008), 945--957.

\end{thebibliography}
\end{document}